\numberwithin{equation}{section}
\newcommand{\N}{\mathbb{N}}
\newcommand{\R}{\mathbb{R}}
\newcommand{\sfd}{{\sf d}}
\renewcommand{\d}{{\mathrm d}}
\newcommand{\restr}[1]{\lower3pt\hbox{\(|_{#1}\)}}
\newcommand{\nchi}{{\raise.3ex\hbox{\(\chi\)}}}
\newcommand{\fr}{\penalty-20\null\hfill\(\blacksquare\)}
\newcommand{\X}{{\rm X}}
\newcommand{\Y}{{\rm Y}}
\newcommand{\mm}{\mathfrak m}
\newtheorem{theorem}{Theorem}[section]
\newtheorem{corollary}[theorem]{Corollary}
\newtheorem{proposition}[theorem]{Proposition}
\newtheorem{definition}[theorem]{Definition}
\newtheorem{example}[theorem]{Example}
\newtheorem{remark}[theorem]{Remark}
\title{Limits and colimits in the category of Banach $L^0$-modules}
\author{Enrico Pasqualetto}
\address{Department of Mathematics and Statistics,
P.O.\ Box 35 (MaD), FI-40014 University of Jyvaskyla}
\email{enrico.e.pasqualetto@jyu.fi}
\begin{document}
\date{\today} 
\keywords{Banach module, limit, colimit, inverse image functor}
\subjclass[2020]{53C23, 18F15, 16D90, 13C05}
\begin{abstract}
We prove that the category of Banach \(L^0\)-modules over a given \(\sigma\)-finite measure space
is both complete and cocomplete, which means that it admits all small limits and colimits.
\end{abstract}
\maketitle
\tableofcontents
\section{Introduction}
In the well-established (but, nonetheless, still fast-growing) research field of analysis on metric measure spaces,
a significant role is played by the theory of Banach \(L^0\)-modules, which (in this context) was introduced by Gigli
in his seminal work \cite{Gigli14}. Therein, Banach \(L^0\)-modules are used to supply an abstract notion of
a `space of measurable tensor fields'. In this regard, an enlightening example is the so-called \emph{cotangent module},
which we are going to describe. An effective Sobolev theory on metric measure spaces is available
\cite{Cheeger00,HKST15,AmbrosioGigliSavare11}, so one can consider the `formal differentials' of Sobolev functions
also in this nonsmooth framework. However, it is clear that in order to obtain an arbitrary \(1\)-form this is not sufficient,
not even on differentiable manifolds: one should also have the freedom to multiply differentials by functions, to sum
the outcomes, and to take their limits. This corresponds to the fact that on a differentiable manifold \(M\) the smooth
\(1\)-forms can be obtained as limits of \(C^\infty(M)\)-linear combinations of differentials of smooth functions.
The line of thought described above led to the following axiomatisation in \cite{Gigli17}: a Banach \(L^0\)-module is
an algebraic module over the ring of \(L^0\)-functions (i.e.\ of measurable functions, quotiented up to a.e.\ equality)
endowed with a \emph{pointwise norm} that induces a complete distance; see Definitions \ref{def:normed_module} and
\ref{def:Banach_module} for the details. As it is evident from the literature on the topic, in order to achieve
a deeper understanding of the structure of metric measure spaces, it is of pivotal importance to put the
functional-analytic aspects of the tensor calculus via Banach \(L^0\)-modules on a firm ground. As an example
of this fact, we recall that the finite-dimensionality (or, more generally, the reflexivity) of the cotangent
module entails the density of Lipschitz functions in the Sobolev space. It is also worth pointing out that the
interest towards Banach \(L^0\)-modules goes far beyond the analysis on metric measure spaces. Indeed, the
essentially equivalent concept of a \emph{randomly normed space} was previously introduced in \cite{HLR91}, as a tool
for studying ultrapowers of Lebesgue--Bochner spaces over a rather general class of measure spaces. Later on,
the slightly different notion of a \emph{random normed module} was investigated (see \cite{Guo-2011} and the references therein):
the motivation comes from the theory of probabilistic metric spaces, and it has applications in finance optimisation
problems, with connections to the study of conditional and dynamic risk measures. Due to the above reasons, in this
paper we will consider Banach \(L^0\)-modules over an arbitrary \(\sigma\)-finite measure space. 
\medskip

The aim of the present paper is to study the category \({\bf BanMod}_{\mathbb X}\) of Banach \(L^0(\mathbb X)\)-modules,
where \(\mathbb X=(\X,\Sigma,\mm)\) is a given \(\sigma\)-finite measure space. The morphisms in \({\bf BanMod}_{\mathbb X}\)
are those \(L^0(\mathbb X)\)-linear operators \(\varphi\colon\mathscr M\to\mathscr N\) that satisfy
\(|\varphi(v)|\leq|v|\) for every \(v\in\mathscr M\). Our main result (namely, Theorem \ref{thm:bicompl})
states that \({\bf BanMod}_{\mathbb X}\) is both a complete category (i.e.\ all limits exist) and a cocomplete
category (i.e.\ all colimits exist). This means, in particular, that \({\bf BanMod}_{\mathbb X}\) admits all equalisers,
products, inverse limits, and pullbacks, as well as all coequalisers, coproducts, direct limits, and pushouts.
The existence of inverse and direct limits in \({\bf BanMod}_{\mathbb X}\) was already known:
\begin{itemize}
\item Inverse limits in \({\bf BanMod}_{\mathbb X}\), whose existence was proved in \cite{GPS18}, were
necessary to build the differential of a locally Sobolev map from a metric measure space to a metric space.
\item Direct limits in \({\bf BanMod}_{\mathbb X}\), whose existence was proved in the unpublished note
\cite{P19}, were used in \cite{DMLP19} to obtain a `representation theorem' for separable Banach \(L^0\)-modules.
Since each separable Banach \(L^0\)-module is the direct limit of a sequence of
finitely-generated Banach \(L^0\)-modules, a representation of an arbitrary separable Banach
\(L^0\)-module as the space of \(L^0\)-sections of a separable \emph{measurable Banach bundle}
could be deduced from the corresponding result for finitely-generated modules, which was previously
obtained in \cite{LP18}.
\end{itemize}
It is worth mentioning that the theory of Banach \(L^0\)-modules extends the one of Banach spaces,
as the latter correspond to Banach \(L^0\)-modules over a measure space whose measure is a Dirac delta.
In fact, the strategy of our proof of the (co)completeness of \({\bf BanMod}_{\mathbb X}\) is inspired
by the one of the category \({\bf Ban}\) of Banach spaces, for which we refer to \cite{SZ65,Pothoven68,Castillo10}.
However, some other aspects of the Banach \(L^0\)-module theory, as the \emph{inverse image functor}
(see Section \ref{ss:inv_img}), are characteristic of Banach \(L^0\)-modules and do not have a
(non-trivial) counterpart in the Banach space setting.
\medskip

We conclude by pointing out that the contents of this paper slightly overlap with those of the
unpublished note \cite{P19}. More specifically, the Examples \ref{ex:BanMod_not_balanced}, \ref{ex:inverse_trivial},
\ref{ex:inverse_not_right_exact}, \ref{ex:direct_not_left_exact}, and \ref{ex:InvIm_no_inverse}
are essentially taken from \cite{P19}, but besides them the two papers are in fact independent.
Indeed, the (co)completeness of \({\bf BanMod}_{\mathbb X}\) is proved \emph{without} using
the existence of inverse and direct limits.
\section{Preliminaries}
Throughout the paper, we denote by \(\mathcal P_F(I)\) the family of all finite subsets of a given set \(I\neq\varnothing\).
To avoid pathological situations, all the measure spaces we consider are assumed to be non-null.
We denote by \(\mathbb P=(P,\Sigma_P,\delta_p)\) the probability space made of a unique point \(p\),
where \(\Sigma_P=\{\varnothing,P\}\) is the only \(\sigma\)-algebra on \(P\) and
\(\delta_p\) is the Dirac measure at \(p\), i.e.\ \(\delta_p(\varnothing)=0\) and \(\delta_p(P)=1\).
Given two \(\sigma\)-finite measure spaces \(\mathbb X=(\X,\Sigma_\X,\mm_\X)\) and \(\mathbb Y=(\Y,\Sigma_\Y,\mm_\Y)\),
we define their product \(\mathbb X\times\mathbb Y\) as the \(\sigma\)-finite measure space
\(\big(\X\times\Y,\Sigma_\X\otimes\Sigma_\Y,\mm_\X\otimes\mm_\Y\big)\), where \(\Sigma_\X\otimes\Sigma_\Y\)
and \(\mm_\X\otimes\mm_\Y\) stand for the product \(\sigma\)-algebra and the product measure, respectively.
We tacitly identify \(\mathbb X\times\mathbb P\) with \(\mathbb X\).
\subsection{Normed and Banach \texorpdfstring{\(L^0(\mathbb X)\)}{L0(X)}-modules}
In this section, we present the theory of Banach \(L^0(\mathbb X)\)-modules, which was first introduced in
\cite{Gigli14} and then refined further in \cite{Gigli17}. See also \cite{GP20}.
\subsubsection*{The space \texorpdfstring{\(L^0(\mathbb X)\)}{L0(X)}}
Let \(\mathbb X=(\X,\Sigma,\mm)\) be a \(\sigma\)-finite measure space. We denote by \(L^0_{\rm ext}(\mathbb X)\)
the space of measurable functions from \((\X,\Sigma)\) to the extended real line \([-\infty,+\infty]\), quotiented up to \(\mm\)-a.e.\ equality.
The space \(L^0_{\rm ext}(\mathbb X)\) is a lattice if endowed with the following partial order relation: given any
\(f,g\in L^0_{\rm ext}(\mathbb X)\), we declare that \(f\leq g\) if and only if \(f(x)\leq g(x)\) holds for \(\mm\)-a.e.\ \(x\in\X\).
\begin{remark}\label{rmk:Dedekind_compl}{\rm
Recall that a lattice \((A,\leq)\) is said to be \emph{Dedekind complete} (see e.g.\ \cite{Fremlin74,Fremlin3})
if every non-empty subset of \(A\) having an upper bound
admits a least upper bound, or equivalently every non-empty subset of \(A\) having a lower bound admits a greatest
lower bound. It is well-known that the lattice \((L^0_{\rm ext}(\mathbb X),\leq)\) is order-bounded, is Dedekind complete, and satisfies
the following property: given any (possibly uncountable) non-empty subset \(\{f_i\}_{i\in I}\) of \(L^0_{\rm ext}(\mathbb X)\), there
exist two (at most) countable subsets \(C,C'\) of \(I\) such that \(\bigvee_{i\in I}f_i=\bigvee_{i\in C}f_i\) and
\(\bigwedge_{i\in I}f_i=\bigwedge_{i\in C'}f_i\).
\fr}\end{remark}

We define the Riesz space \(L^0(\mathbb X)\) as
\[
L^0(\mathbb X)\coloneqq\big\{f\in L^0_{\rm ext}(\mathbb X)\;\big|\;-\infty<f(x)<+\infty\,\text{ for }\mm\text{-a.e.\ }x\in\X\big\}.
\]
Then \(L^0(\mathbb X)\) is a commutative algebra with respect to the usual pointwise operations, as well as a
\(\sigma\)-sublattice of \(L^0_{\rm ext}(\mathbb X)\), i.e.\ a sublattice of \(L^0_{\rm ext}(\mathbb X)\) that
is closed under countable suprema and infima. In particular, Remark \ref{rmk:Dedekind_compl} ensures that
\((L^0(\mathbb X),\leq)\) is Dedekind complete and that each supremum (resp.\ infimum) in \(L^0(\mathbb X)\)
can be expressed as a countable supremum (resp.\ infimum). Moreover, the space \(L^0(\mathbb X)\) is a topological
algebra if endowed with the following complete distance:
\[
\sfd_{L^0(\mathbb X)}(f,g)\coloneqq\int|f-g|\wedge 1\,\d\tilde\mm\quad\text{ for every }f,g\in L^0(\mathbb X),
\]
where \(\tilde\mm\) is any finite measure on \((\X,\Sigma)\) satisfying \(\mm\ll\tilde\mm\ll\mm\). While the distance
\(\sfd_{L^0(\mathbb X)}\) depends on the specific choice of \(\tilde\mm\), its induced topology does not.
Moreover, a sequence \((f_n)_{n\in\N}\subseteq L^0(\mathbb X)\) satisfies \(\sfd_{L^0(\mathbb X)}(f_n,f)\to 0\)
as \(n\to\infty\) for some limit function \(f\in L^0(\mathbb X)\) if and only if we can extract a subsequence
\((n_i)_{i\in\N}\) such that \(f_{n_i}(x)\to f(x)\) as \(i\to\infty\) for \(\mm\)-a.e.\ \(x\in\X\).
\subsubsection*{Definition and main properties of Banach \texorpdfstring{\(L^0(\mathbb X)\)}{L0(X)}-modules}
We begin with the relevant definitions:
\begin{definition}[Normed \(L^0(\mathbb X)\)-module]\label{def:normed_module}
Let \(\mathbb X\) be a \(\sigma\)-finite measure space. Let \(\mathscr M\) be a module over \(L^0(\mathbb X)\).
Then we say that \(\mathscr M\) is a \emph{seminormed \(L^0(\mathbb X)\)-module} if it is endowed with a
\emph{pointwise seminorm}, i.e.\ with a mapping \(|\cdot|\colon\mathscr M\to L^0(\mathbb X)\) that satisfies the following properties:
\[\begin{split}
|v|\geq 0&\quad\text{ for every }v\in\mathscr M,\\
|v+w|\leq|v|+|w|&\quad\text{ for every }v,w\in\mathscr M,\\
|f\cdot v|=|f||v|&\quad\text{ for every }f\in L^0(\mathbb X)\text{ and }v\in\mathscr M.
\end{split}\]
Moreover, we say that \(\mathscr M\) is a \emph{normed \(L^0(\mathbb X)\)-module} provided \(|v|=0\) if and only if \(v=0\).
\end{definition}

Any pointwise seminorm on \(\mathscr M\) induces a pseudometric \(\sfd_{\mathscr M}\) on \(\mathscr M\):
\[
\sfd_{\mathscr M}(v,w)\coloneqq\sfd_{L^0(\mathbb X)}(|v-w|,0)\quad\text{ for every }v,w\in\mathscr M.
\]
It holds that \(\mathscr M\) is a normed \(L^0(\mathbb X)\)-module if and only if \(\sfd_{\mathscr M}\) is a distance.
\begin{definition}[Banach \(L^0(\mathbb X)\)-module]\label{def:Banach_module}
Let \(\mathbb X\) be a \(\sigma\)-finite measure space. Let \(\mathscr M\) be a normed \(L^0(\mathbb X)\)-module.
Then we say that \(\mathscr M\) is a \emph{Banach \(L^0(\mathbb X)\)-module} if the pointwise norm \(|\cdot|\)
is complete, meaning that the induced distance \(\sfd_{\mathscr M}\) is complete.
\end{definition}
The space \(L^0(\mathbb X)\) itself is a Banach \(L^0(\mathbb X)\)-module. Moreover, if the measure underlying \(\mathbb X\) is
a Dirac measure, then \(L^0(\mathbb X)\cong\R\), and so the Banach \(L^0(\mathbb X)\)-modules are exactly the Banach spaces.

\begin{remark}{\rm
A warning about the terminology: in this paper we distinguish between normed \(L^0(\mathbb X)\)-modules and Banach \(L^0(\mathbb X)\)-modules,
while in the original papers \cite{Gigli14,Gigli17} only complete normed \(L^0(\mathbb X)\)-modules were considered (but they were called just
`normed \(L^0\)-modules').
\fr}\end{remark}

Given a Banach \(L^0(\mathbb X)\)-module \(\mathscr M\), a partition \((E_n)_{n\in\N}\subseteq\Sigma\) of \(\X\), and a sequence
\((v_n)_{n\in\N}\subseteq\mathscr M\), the series \(\sum_{n\in\N}\nchi_{E_n}\cdot v_n\) converges unconditionally
in \(\mathscr M\) (by the dominated convergence theorem).
\begin{remark}\label{rmk:about_conv_L0}{\rm
Let \(\mathbb X=(\X,\Sigma,\mm)\) be a \(\sigma\)-finite measure space and \(\mathscr M\) a Banach \(L^0(\mathbb X)\)-module.
Then it follows from the properties of \(\sfd_{L^0(\mathbb X)}\) that for any partition \((E_n)_{n\in\N}\subseteq\Sigma\) of \(\X\) it holds that
\[
\mathscr M^\N\ni(v_n)_{n\in\N}\mapsto\sum_{n\in\N}\nchi_{E_n}\cdot v_n\in\mathscr M\quad\text{ is a continuous map,}
\]
where the source space is endowed with the product topology.
\fr}\end{remark}
\subsubsection*{Examples of Banach \texorpdfstring{\(L^0(\mathbb X)\)}{L0(X)}-modules}
There are many ways to obtain a Banach \(L^0(\mathbb X)\)-module:
\begin{itemize}
\item \emph{Submodule.} An \(L^0(\mathbb X)\)-submodule \(\mathscr N\) of a normed \(L^0(\mathbb X)\)-module \(\mathscr M\)
inherits the structure of a normed \(L^0(\mathbb X)\)-module (if endowed with the restriction of the pointwise norm of \(\mathscr M\)).
Moreover, if \(\mathscr M\) is complete and \(\mathscr N\) is closed in \(\mathscr M\), then \(\mathscr N\) is a Banach \(L^0(\mathbb X)\)-module.
\item \emph{Null space.} Let \(\mathscr M\), \(\mathscr N\) be Banach \(L^0(\mathbb X)\)-modules and \(\varphi\colon\mathscr M\to\mathscr N\)
an \(L^0(\mathbb X)\)-linear and continuous map. Then the space \(\varphi^{-1}(\{0\})\coloneqq\big\{v\in\mathscr M\,:\,\varphi(v)=0\big\}\)
is a Banach \(L^0(\mathbb X)\)-submodule of \(\mathscr M\).
\item \emph{Range.} Let \(\mathscr M\), \(\mathscr N\) be Banach \(L^0(\mathbb X)\)-modules and \(\varphi\colon\mathscr M\to\mathscr N\)
an \(L^0(\mathbb X)\)-linear and continuous map. Then the space \(\varphi(\mathscr M)\coloneqq\big\{\varphi(v)\,:\,v\in\mathscr M\big\}\) is a normed
\(L^0(\mathbb X)\)-submodule of \(\mathscr N\). We point out that, in general, the space \(\varphi(\mathscr M)\) is not complete
(see Example \ref{ex:BanMod_not_balanced}).
\item \emph{Metric identification.} Let \(\mathscr M\) be a seminormed \(L^0(\mathbb X)\)-module. The equivalence relation \(\sim_{|\cdot|}\)
on \(\mathscr M\) is defined as follows: given any \(v,w\in\mathscr M\), we declare that \(v\sim_{|\cdot|} w\) if and only if \(|v-w|=0\).
Then the quotient \(\mathscr M/\sim_{|\cdot|}\) inherits a structure of normed \(L^0(\mathbb X)\)-module.
\item \emph{Completion.} Let \(\mathscr M\) be a normed \(L^0(\mathbb X)\)-module. Then there exists a unique couple \((\bar{\mathscr M},\iota)\),
where \(\bar{\mathscr M}\) is a Banach \(L^0(\mathbb X)\)-module, while \(\iota\colon\mathscr M\to\bar{\mathscr M}\) is an \(L^0(\mathbb X)\)-linear
map that preserves the pointwise norm and satisfies \({\rm cl}_{\bar{\mathscr M}}(\iota(\mathscr M))=\bar{\mathscr M}\). Uniqueness is up
to a unique isomorphism: given any \((\mathscr N,\tilde\iota)\) with the same properties, there exists a unique \(L^0(\mathbb X)\)-linear
bijection \(\Phi\colon\bar{\mathscr M}\to\mathscr N\) that preserves the pointwise norm (i.e.\ an \emph{isomorphism} of Banach
\(L^0(\mathbb X)\)-modules) and satisfies the identity \(\Phi\circ\iota=\tilde\iota\).
\item \emph{Quotient.} Let \(\mathscr M\) be a Banach \(L^0(\mathbb X)\)-module and \(\mathscr N\) a Banach \(L^0(\mathbb X)\)-submodule of
\(\mathscr M\). Then the quotient \(\mathscr M/\mathscr N\) is a Banach \(L^0(\mathbb X)\)-module if endowed with the pointwise norm
\[
|v+\mathscr N|\coloneqq\bigwedge_{w\in\mathscr N}|v+w|\quad\text{ for every }v\in\mathscr M.
\]
\item \emph{Space of homomorphisms.} Let \(\mathscr M\), \(\mathscr N\) be Banach \(L^0(\mathbb X)\)-modules. Then
\(\textsc{Hom}(\mathscr M,\mathscr N)\) is defined as the space of \(L^0(\mathbb X)\)-linear maps \(\varphi\colon\mathscr M\to\mathscr N\) for which
there exists \(g\in L^0(\mathbb X)^+\) such that \(|\varphi(v)|\leq g|v|\) holds for every \(v\in\mathscr M\). If endowed with the pointwise norm
\[
|\varphi|\coloneqq\bigwedge\big\{g\in L^0(\mathbb X)^+\;\big|\;|\varphi(v)|\leq g|v|\,\text{ for every }v\in\mathscr M\big\}
\]
and the usual pointwise operations, the space \(\textsc{Hom}(\mathscr M,\mathscr N)\) is a Banach \(L^0(\mathbb X)\)-module.
Its elements are called the \emph{homomorphisms} of Banach \(L^0(\mathbb X)\)-modules from \(\mathscr M\) to \(\mathscr N\).
\item \emph{Dual.} The dual of a Banach \(L^0(\mathbb X)\)-module \(\mathscr M\) is defined as
\(\mathscr M^*\coloneqq\textsc{Hom}(\mathscr M,L^0(\mathbb X))\).
\item \emph{Hilbert modules.} Let \(S\neq\varnothing\) be an arbitrary set. We define the space \(\mathscr H_{\mathbb X}(S)\) as
\[
\mathscr H_{\mathbb X}(S)\coloneqq\bigg\{v\in L^0(\mathbb X)^S\;\bigg||v|\coloneqq\bigg(\sum_{s\in S}|v(s)|^2\bigg)^{1/2}\in L^0(\mathbb X)\bigg\}.
\]
In particular, given \(v\in\mathscr H_{\mathbb X}(S)\) we have \(v(s)=0\) for all but countably many \(s\in S\). Then
\(\big(\mathscr H_{\mathbb X}(S),|\cdot|\big)\) is a Banach \(L^0(\mathbb X)\)-module with respect to the componentwise operations.
Also, \(\mathscr H_{\mathbb X}(S)\) is a \emph{Hilbert \(L^0(\mathbb X)\)-module}, i.e.\ it verifies the pointwise parallelogram rule:
\[
|v+w|^2+|v-w|^2=2|v|^2+2|w|^2\quad\text{ for every }v,w\in\mathscr H_{\mathbb X}(S).
\]
The elements \(\{e_s\}_{s\in S}\subseteq\mathscr H_{\mathbb X}(S)\), defined as \(e_s(t)\coloneqq 0\) for every \(t\in S\setminus\{s\}\) and
\(e_s(s)\coloneqq\nchi_\X\), form an orthonormal basis of \(\mathscr H_{\mathbb X}(S)\). Hence, given any \(S_1,S_2\neq\varnothing\), it holds that
\(\mathscr H_{\mathbb X}(S_1)\) and \(\mathscr H_{\mathbb X}(S_2)\) are isomorphic as Banach \(L^0(\mathbb X)\)-modules if and only if
\({\rm card}(S_1)={\rm card}(S_2)\).
\item \emph{\(L^0\)-Lebesgue--Bochner space.}
Let \(\mathscr M\) be a Banach \(L^0(\mathbb Y)\)-module, for some \(\sigma\)-finite measure space \(\mathbb Y\).
Then we denote by \(L^0(\mathbb X;\mathscr M)\) the space of all measurable maps \(v\colon\X\to\mathscr M\)
taking values in a separable subspace of \(\mathscr M\) (that depends on \(v\)), quotiented up to \(\mm_\X\)-a.e.\ equality.
Then it holds that \(L^0(\mathbb X;\mathscr M)\) is a Banach \(L^0(\mathbb X\times\mathbb Y)\)-module if equipped with:
\[\begin{split}
(v+w)(x)\coloneqq v(x)+w(x)\in\mathscr M&\quad\text{ for every }v,w\in L^0(\mathbb X;\mathscr M)\text{ and }\mm_\X\text{-a.e.\ }x\in\X,\\
(f\cdot v)(x)\coloneqq f(x,\cdot)\cdot v(x)\in\mathscr M&\quad\text{ for every }f\in L^0(\mathbb X\times\mathbb Y),\,
v\in L^0(\mathbb X;\mathscr M),\text{ and }\mm_\X\text{-a.e.\ }x\in\X,\\
|v|(x,y)\coloneqq|v(x)|(y)&\quad\text{ for every }v\in L^0(\mathbb X;\mathscr M)
\text{ and }(\mm_\X\otimes\mm_\Y)\text{-a.e.\ }(x,y)\in\X\times\Y.
\end{split}\]
In particular, for any Banach space \(B\) we can regard \(L^0(\mathbb X;B)\) as a Banach \(L^0(\mathbb X)\)-module.
The space of \emph{simple maps} from \(\mathbb X\) to \(\mathscr M\), i.e.\ of those elements of \(L^0(\mathbb X;\mathscr M)\)
that can be written as \(\sum_{i=1}^n\nchi_{E_i}v_i\) for some \((E_i)_{i=1}^n\subseteq\Sigma_\X\) and
\((v_i)_{i=1}^n\subseteq\mathscr M\), is dense in \(L^0(\mathbb X;\mathscr M)\).
\end{itemize}
The above claims can be proved by adapting the arguments in the proofs of \cite[Section 1.2]{Gigli14}.
\begin{remark}\label{rmk:about_RNP}{\rm
Let \(\mathbb X\) be a \(\sigma\)-finite measure space and \(B\) a Banach space. We denote by \(B'\) the dual of \(B\)
in the sense of Banach spaces. Then the map \(\iota_{\mathbb X,B}\colon L^0(\mathbb X;B')\to L^0(\mathbb X;B)^*\), given by
\[
\iota_{\mathbb X,B}(\omega)(v)\coloneqq\omega(\cdot)\big(v(\cdot)\big)\in L^0(\mathbb X)
\quad\text{ for every }\omega\in L^0(\mathbb X;B')\text{ and }v\in L^0(\mathbb X;B),
\]
is a morphism of Banach \(L^0(\mathbb X)\)-modules that preserves the pointwise norm. It also holds that
\[
\iota_{\mathbb X,B}\;\text{ is an isomorphism}\quad\Longleftrightarrow\quad B'\;\text{ has the Radon--Nikod\'ym property};
\]
see \cite[Theorems 1.3.10 and 1.3.26]{AinBS}, \cite[Proposition 1.2.13]{Gigli14}, and \cite[Appendix B]{LP18}.
We point out that, even in the case where \(B'\) does not have the Radon--Nikod\'{y}m property, the space
\(L^0(\mathbb X;B)^*\) can be characterised in several ways, see e.g.\ \cite{GLP22} for (generalisations of) this kind of results.
\fr}\end{remark}
\subsection{A reminder on category theory}
In this section, we recall some important notions and results in category theory, mostly concerning limits and colimits.
We refer to \cite{mitchell1967theory,MacLane98,kashiwara2005categories,Awodey10} for a thorough account of these topics.
Let us begin by fixing some useful terminology and notation.
\medskip

Given a category \({\bf C}\), we denote by \({\rm Ob}_{\bf C}\) and \({\rm Hom}_{\bf C}\) the classes of its objects
and morphisms, respectively. The domain and the codomain of a morphism \(\varphi\colon X\to Y\) are denoted by
\({\rm dom}(\varphi)\coloneqq X\) and \({\rm cod}(\varphi)\coloneqq Y\), respectively. Given two objects \(X\), \(Y\)
of \({\bf C}\), we denote by \({\rm Hom}_{\bf C}(X,Y)\) the class of those morphisms \(\varphi\) in \({\bf C}\)
such that \({\rm dom}(\varphi)=X\) and \({\rm cod}(\varphi)=Y\). We say that \({\bf C}\) is \emph{small} if the classes
\({\rm Ob}_{\bf C}\) and \({\rm Hom}_{\bf C}\) are sets, while we say that \({\bf C}\) is \emph{locally small} if the
class \({\rm Hom}_{\bf C}(X,Y)\) is a set for every pair of objects \(X\), \(Y\) of \({\bf C}\). The \emph{opposite}
(or \emph{dual}) \emph{category} \({\bf C}^{\rm op}\) is obtained from \({\bf C}\) by `reversing the morphisms'
(see, for example, \cite[page 12]{kashiwara2005categories} for the precise definition of \({\bf C}^{\rm op}\)).
A category \({\bf D}\) is said to be a \emph{subcategory} of \({\bf C}\) provided the following conditions hold:
\begin{itemize}
\item \({\rm Ob}_{\bf D}\) is a subcollection of \({\rm Ob}_{\bf C}\).
\item For any two objects \(X\), \(Y\) of \({\bf D}\), the class \({\rm Hom}_{\bf D}(X,Y)\) is a subcollection of
\({\rm Hom}_{\bf C}(X,Y)\).
\item The composition in \({\bf D}\) is induced by the composition in \({\bf C}\).
\item The identity morphisms in \({\bf D}\) are identity morphisms in \({\bf C}\).
\end{itemize}
We say that \({\bf D}\) is a \emph{full subcategory} of \({\bf C}\) if \({\rm Hom}_{\bf D}(X,Y)\)
coincides with \({\rm Hom}_{\bf C}(X,Y)\) for every pair of objects \(X\), \(Y\) of \({\bf D}\).
Given two locally small categories \({\bf C}\), \({\bf D}\), each functor \(F\colon{\bf C}\to{\bf D}\) induces a collection
of maps \(F_{X,Y}\colon{\rm Hom}_{\bf C}(X,Y)\to{\rm Hom}_{\bf D}(F(X),F(Y))\) for every pair of objects \(X\), \(Y\) of
\({\bf C}\). We say that \(F\) is \emph{full} (resp.\ \emph{faithful}) if \(F_{X,Y}\) is surjective (resp.\ injective)
for every pair of objects \(X\), \(Y\) of \({\bf C}\). We say that \(F\) is \emph{injective on objects}
if \(F(X)\) and \(F(Y)\) are different whenever \(X\) and \(Y\) are different objects of \({\bf C}\).
We say that \({\bf C}\) \emph{has zero morphisms} if there is a collection of morphisms \(0_{XY}\colon X\to Y\), indexed
by the pairs \(X\), \(Y\) of objects of \({\bf C}\), that satisfies the following property: given any three objects \(X\), \(Y\),
\(Z\) of \({\bf C}\) and any two morphisms \(\varphi\colon Y\to Z\) and \(\psi\colon X\to Y\) in \({\bf C}\), the diagram
\[\begin{tikzcd}
X \arrow[d,swap,"\psi"] \arrow[drr,"0_{XZ}"] \arrow[rr,"0_{XY}"] & & Y \arrow[d,"\varphi"] \\
Y \arrow[rr,swap,"0_{YZ}"] & & Z
\end{tikzcd}\]
commutes. If \({\bf C}\) has zero morphisms, then the zero morphisms \(0_{XY}\) are uniquely determined.
\medskip

An object \(I\) of a category \({\bf C}\) is said to be an \emph{initial object} if for any object \(X\) of \({\bf C}\)
there exists exactly one morphism \(i_X\colon I\to X\). Dually, an object \(T\) of \({\bf C}\) is said to be a \emph{terminal object}
if for any object \(X\) of \({\bf C}\) there exists exactly one morphism \(t_X\colon X\to T\). An object that is both initial and
terminal is called a \emph{zero object} of \({\bf C}\). Initial and terminal objects (thus, a fortiori, zero objects)
are uniquely determined up to a unique isomorphism: given any two initial objects \(I_1\) and \(I_2\) of \({\bf C}\),
there exists a unique isomorphism \(I_1\to I_2\); similarly for terminal objects. A \emph{pointed category} is a category
\({\bf C}\) having a zero object, which we denote by \(0_{\bf C}\). Each pointed category \({\bf C}\) has zero morphisms:
for any two objects \(X\), \(Y\) of \({\bf C}\), the zero morphism \(0_{XY}\) is given by \(i_Y\circ t_X\).
\medskip

A morphism \(\varphi\colon X\to Y\) in a category \({\bf C}\) is said to be a \emph{monomorphism} if it is left-cancellative,
meaning that \(\psi_1=\psi_2\) holds whenever \(Z\) is an object of \({\bf C}\) and \(\psi_1,\psi_2\colon Z\to X\) are two
morphisms in \({\bf C}\) satisfying \(\varphi\circ\psi_1=\varphi\circ\psi_2\). Dually, a morphism \(\varphi\colon X\to Y\)
is said to be an \emph{epimorphism} if it is right-cancellative, meaning that \(\psi_1=\psi_2\) holds whenever \(Z\) is an
object of \({\bf C}\) and \(\psi_1,\psi_2\colon Y\to Z\) are two morphisms in \({\bf C}\) satisfying \(\psi_1\circ\varphi=\psi_2\circ\varphi\).
We say that a category \({\bf C}\) is \emph{balanced} if every morphism in \({\bf C}\) that is both a monomorphism and an epimorphism
is an isomorphism.
\medskip

Some important examples of categories, which play a key role in this paper, are the following:
\begin{itemize}
\item The category \({\bf Set}\), whose objects are the sets and whose morphisms are the functions.
\item The category \({\bf Meas}_\sigma\) of \(\sigma\)-finite measure spaces. Given any two \(\sigma\)-finite measure spaces
\(\mathbb X=(\X,\Sigma_\X,\mm_\X)\) and \(\mathbb Y=(\Y,\Sigma_\Y,\mm_\Y)\), a morphism \(\tau\colon\mathbb X\to\mathbb Y\)
is a \((\Sigma_\X,\Sigma_\Y)\)-measurable map \(\tau\colon\X\to\Y\) that satisfies \(\tau_\#\mm_\X\ll\mm_\Y\). This notion
of morphism differs from those of other authors, who require e.g.\ \(\tau\) to be measure-preserving or to verify \(\tau_\#\mm_\X\leq C\mm_\Y\)
for some constant \(C>0\). Notice also that the measure \(\tau_\#\mm_\X\) on \(\Y\) needs not be \(\sigma\)-finite.
\item The category \({\bf Ban}\), whose objects are the Banach spaces and whose morphisms are the linear \(1\)-Lipschitz operators.
We refer e.g.\ to \cite{Pothoven68,Castillo10} for a study of the category \({\bf Ban}\).
\item Let \((I,\leq)\) be a \emph{directed set}, i.e.\ a non-empty partially ordered set where any pair of elements admits an upper bound.
Then \((I,\leq)\) can be regarded as a small category, where the objects are the elements of \(I\), while the morphisms are as follows:
given any \(i,j\in I\), we declare that there is a (unique) morphism \(i\to j\) if and only if \(i\leq j\).
\end{itemize}

Given two categories \({\bf C}\), \({\bf D}\) and two functors \(F,G\colon{\bf D}\to{\bf C}\), a \emph{natural transformation}
from \(F\) to \(G\) is a collection \(\eta_\star\) of morphisms \(\eta_X\colon F(X)\to G(X)\), indexed by the objects \(X\) of
\({\bf D}\), such that for any morphism \(\varphi\colon X\to Y\) in \({\bf D}\) the diagram
\[\begin{tikzcd}
F(X) \arrow[d,swap,"F(\varphi)"] \arrow[r,"\eta_X"] & G(X) \arrow[d,"G(\varphi)"] \\
F(Y) \arrow[r,swap,"\eta_Y"] & G(Y)
\end{tikzcd}\]
commutes. We denote by \({\bf C}^{\bf D}\) the \emph{functor category} from \({\bf D}\) to \({\bf C}\), whose objects are the functors
from \({\bf D}\) to \({\bf C}\) and whose morphisms are the natural transformations between them. An isomorphism in \({\bf C}^{\bf D}\) is called
a \emph{natural isomorphism}. Given an index category \({\bf J}\), the \emph{diagonal functor} \(\Delta_{{\bf J},{\bf C}}\colon{\bf C}\to{\bf C}^{\bf J}\)
is defined as follows: given any object \(X\) of \({\bf C}\), we set \(\Delta_{{\bf J},{\bf C}}(X)(i)\coloneqq X\) for every object
\(i\) of \({\bf J}\) and \(\Delta_{{\bf J},{\bf C}}(X)(\phi)\coloneqq{\rm id}_X\) for every morphism \(\phi\) in \({\bf J}\);
given any morphism \(\varphi\colon X\to Y\) in \({\bf C}\), we define the natural transformation \(\Delta_{{\bf J},{\bf C}}(\varphi)_\star\)
as \(\Delta_{{\bf J},{\bf C}}(\varphi)_i\coloneqq\varphi\) for every object \(i\) of \({\bf J}\).
\medskip

The \emph{arrow category} \({\bf C}^\to\) of a given category \({\bf C}\) is defined as follows. The objects of \({\bf C}^\to\)
are the morphisms in \({\bf C}\), while a morphism \(\varphi\to\psi\) in \({\bf C}^\to\) is given by a couple \((\alpha,\beta)\)
of morphisms \(\alpha\colon{\rm dom}(\varphi)\to{\rm dom}(\psi)\) and \(\beta\colon{\rm cod}(\varphi)\to{\rm cod}(\psi)\) in
\({\bf C}\) for which the following diagram commutes:
\[\begin{tikzcd}
{\rm dom}(\varphi) \arrow[d,swap,"\varphi"] \arrow[r,"\alpha"] & {\rm dom}(\psi) \arrow[d,"\psi"] \\
{\rm cod}(\varphi) \arrow[r,swap,"\beta"] & {\rm cod}(\psi)
\end{tikzcd}\]

More generally, given three categories \({\bf C}\), \({\bf C}_1\), \({\bf C}_2\) and two functors \(F\colon{\bf C}_1\to{\bf C}\)
and \(G\colon{\bf C}_2\to{\bf C}\), we define the \emph{comma category} \((F\downarrow G)\) in the following way:
\begin{itemize}
\item The objects of \((F\downarrow G)\) are the triples \((X,Y,\varphi)\), where \(X\) is an object of \({\bf C}_1\), \(Y\) is
an object of \({\bf C}_2\), and \(\varphi\colon F(X)\to G(Y)\) is a morphism in \({\bf C}\).
\item A morphism \((X_1,Y_1,\varphi_1)\to(X_2,Y_2,\varphi_2)\) in \((F\downarrow G)\) is given by a couple \((\psi_1,\psi_2)\),
where \(\psi_1\colon X_1\to X_2\) is a morphism in \({\bf C}_1\) and \(\psi_2\colon Y_1\to Y_2\) is a morphism in \({\bf C}_2\)
such that
\[\begin{tikzcd}
F(X_1) \arrow[d,swap,"\varphi_1"] \arrow[r,"F(\psi_1)"] & F(X_2) \arrow[d,"\varphi_2"] \\
G(Y_1) \arrow[r,swap,"G(\psi_2)"] & G(Y_2)
\end{tikzcd}\]
is a commutative diagram.
\end{itemize}
If \({\bf C}_2={\bf 1}\) (i.e.\ \({\bf C}_2\) is the one-object one-morphism category) and \(X\) is a given object of \({\bf C}\),
we just write \((F\downarrow X)\) instead of \((F\downarrow\Delta_{{\bf 1},{\bf C}}(X))\). Similarly for \((X\downarrow G)\) in the 
case where \({\bf C}_1={\bf 1}\). Observe also that the arrow category \({\bf C}^\to\) coincides with the comma category
\(({\rm id}_{\bf C}\downarrow{\rm id}_{\bf C})\).
\subsubsection*{Limits and colimits}
Let \({\bf J}\) be an index category. Then a \emph{diagram} of type \({\bf J}\) in a category \({\bf C}\) is a functor
\(D\colon{\bf J}\to{\bf C}\). A \emph{cone} to \(D\) is an object \(X\) of \({\bf C}\) together with a collection \(\varphi_\star\) of
morphisms \(\varphi_i\colon X\to D(i)\), indexed by the objects \(i\) of \({\bf J}\), such that \(D(\phi)\circ\varphi_i=\varphi_j\)
for every morphism \(\phi\colon i\to j\) in \({\bf J}\). A \emph{limit} of \(D\) is a cone \((L,\lambda_\star)\) to \(D\) that satisfies
the following universal property: given any cone \((X,\varphi_\star)\) to \(D\), there exists a unique morphism \(\Phi\colon X\to L\)
such that the diagram
\[\begin{tikzcd}
& X \arrow[dddl,swap,"\varphi_i"] \arrow[dd,dashed,"\Phi"] \arrow[dddr,"\varphi_j"] & \\
& & & \\
& L \arrow[dl,"\lambda_i"] \arrow[dr,swap,"\lambda_j"] & \\
D(i) \arrow[rr,swap,"D(\phi)"] & & D(j)
\end{tikzcd}\]
commutes for every morphism \(\phi\colon i\to j\) in \({\bf J}\). If a limit of \(D\) exists, then it is \emph{essentially} unique
(i.e.\ unique up to a unique isomorphism), so that we are entitled to refer to it as `the' limit of \(D\).
Alternatively, the cones of \(D\) can be identified with the objects of the category \((\Delta_{{\bf J},{\bf C}}\downarrow D)\), which is
thus called the \emph{category of cones} to \(D\), and a limit of \(D\) is a terminal object in \((\Delta_{{\bf J},{\bf C}}\downarrow D)\).
\medskip

Dually, by a \emph{cocone} of \(D\) we mean an object \(X\) of \({\bf C}\) together with a collection \(\varphi_\star\) of
morphisms \(\varphi_i\colon D(i)\to X\), indexed by the objects \(i\) of \({\bf J}\), such that \(\varphi_j\circ D(\phi)=\varphi_i\)
for every morphism \(\phi\colon i\to j\) in \({\bf J}\). A \emph{colimit} of \(D\) is a cocone \((C,c_\star)\) of \(D\) that satisfies
the following universal property: given any cocone \((X,\varphi_\star)\) of \(D\), there exists a unique morphism \(\Phi\colon C\to X\)
such that the diagram
\[\begin{tikzcd}
D(i) \arrow[dddr,swap,"\varphi_i"] \arrow[dr,"c_i"] \arrow[rr,"D(\phi)"] & & D(j) \arrow[dl,swap,"c_j"] \arrow[dddl,"\varphi_j"] \\
& C \arrow[dd,dashed,"\Phi"] & \\
& & & \\
& X &
\end{tikzcd}\]
commutes for every morphism \(\phi\colon i\to j\) in \({\bf J}\). Whenever it exists, a colimit of \(D\) is essentially unique,
thus we can unambiguously call it `the' colimit of \(D\). Alternatively, the cocones of \(D\) can be identified with the objects of the category
\((D\downarrow\Delta_{{\bf J},{\bf C}})\), which is thus called the \emph{category of cocones} of \(D\), and a colimit of \(D\) is an
initial object in \((D\downarrow\Delta_{{\bf J},{\bf C}})\).
\medskip

The following are some of the most important examples of limits in a category \({\bf C}\):
\begin{itemize}
\item Let \(X\), \(Y\) be two objects of \({\bf C}\) and \(a,b\colon X\to Y\) two morphisms in \({\bf C}\). Then the \emph{equaliser} of
\(a,b\colon X\to Y\) is an object \({\rm Eq}(a,b)\) of \({\bf C}\) together with a morphism \({\rm eq}(a,b)\colon{\rm Eq}(a,b)\to X\)
with \(a\circ{\rm eq}(a,b)=b\circ{\rm eq}(a,b)\) verifying the following universal property: if \(u\colon E\to X\) is a morphism in \({\bf C}\)
satisfying \(a\circ u=b\circ u\), then there exists a unique morphism \(\Phi\colon E\to{\rm Eq}(a,b)\) such that \({\rm eq}(a,b)\circ\Phi=u\).
The equaliser \(\big({\rm Eq}(a,b),{\rm eq}(a,b)\big)\) coincides with the limit of the diagram of type \({\bf J}_\rightrightarrows\) in
\({\bf C}\), where \({\bf J}_\rightrightarrows\) is the category made of two objects (corresponding to \(X\) and \(Y\)) and
(besides the identity morphisms) having only two parallel morphisms
between them (corresponding to \(a\) and \(b\)).
Observe also that \({\rm eq}(a,b)\) is a monomorphism.
\item Suppose \({\bf C}\) has zero morphisms. Then the \emph{kernel} of a morphism \(\varphi\colon X\to Y\) in \({\bf C}\) is
\[
\big({\rm Ker}(\varphi),{\rm ker}(\varphi)\big)\coloneqq\big({\rm Eq}(\varphi,0_{XY}),{\rm eq}(\varphi,0_{XY})\big),
\]
whenever the equaliser of \(\varphi,0_{XY}\colon X\to Y\) exists.
\item Let \(X_\star=\{X_i\}_{i\in I}\) be a set of objects of \({\bf C}\). Then the \emph{product} of \(X_\star\)
in \({\bf C}\) is an object \(\prod^{\bf C}X_\star=\prod_{i\in I}^{\bf C}X_i\) together with a family of morphisms
\(\big\{\pi_i\colon\prod^{\bf C}X_\star\to X_i\big\}_{i\in I}\) verifying the following universal property: given an object
\(Y\) of \({\bf C}\) and a family of morphisms \(\{\varphi_i\colon Y\to X_i\}_{i\in I}\), there exists a unique morphism
\(\Phi\colon Y\to\prod^{\bf C}X_\star\) such that \(\pi_i\circ\Phi=\varphi_i\) for every \(i\in I\). The product
\(\big(\prod^{\bf C}X_\star,\{\pi_i\}_{i\in I}\big)\) coincides with the limit of the diagram of type \({\bf J}_I\)
in \({\bf C}\), where \({\bf J}_I\) is the discrete category whose objects are the elements of \(I\).
\item Let \((I,\leq)\) be a directed set. By an \emph{inverse} (or \emph{projective}) \emph{system} in \({\bf C}\) indexed by \(I\)
we mean a family of objects \(\{X_i\}_{i\in I}\), together with a family \(\{{\rm P}_{ij}\,:\,i,j\in I,\,i\leq j\}\) of morphisms
\({\rm P}_{ij}\colon X_j\to X_i\) such that \({\rm P}_{ii}={\rm id}_{X_i}\) for every \(i\in I\) and \({\rm P}_{ik}={\rm P}_{ij}\circ{\rm P}_{jk}\)
for every \(i,j,k\in I\) with \(i\leq j\leq k\). Then the \emph{inverse} (or \emph{projective}) \emph{limit} of
\(\big(\{X_i\}_{i\in I},\{{\rm P}_{ij}\}_{i\leq j}\big)\) is an object \(\varprojlim X_\star\) of \({\bf C}\),
together with a family \(\{{\rm P}_i\}_{i\in I}\) of morphisms \({\rm P}_i\colon\varprojlim X_\star\to X_i\) such that
\({\rm P}_{ij}\circ{\rm P}_j={\rm P}_i\) for every \(i\leq j\) and verifying the following universal property:
given an object \(Y\) of \({\bf C}\) and morphisms \({\rm Q}_i\colon Y\to X_i\) with \({\rm P}_{ij}\circ{\rm Q}_j={\rm Q}_i\)
for every \(i\leq j\), there exists a unique morphism \(\Phi\colon Y\to\varprojlim X_\star\) such that \({\rm P}_i\circ\Phi={\rm Q}_i\)
for every \(i\in I\). Moreover, \(\big(\varprojlim X_\star,\{{\rm P}_i\}_{i\in I}\big)\) coincides with the limit
of the diagram of type \((I,\leq)^{\rm op}\) in \({\bf C}\).
\item Let \(X\), \(Y\), \(Z\) be objects of \({\bf C}\). Let \(\varphi_X\colon X\to Z\) and \(\varphi_Y\colon Y\to Z\) be two given morphisms.
Then the \emph{pullback} of \(\varphi_X\) and \(\varphi_Y\) is an object \(P=X\times_Z Y\) of \({\bf C}\), together with two morphisms
\(p_X\colon P\to X\) and \(p_Y\colon P\to Y\) with \(\varphi_X\circ p_X=\varphi_Y\circ p_Y\) verifying the
following universal property: given an object \(Q\) of \({\bf C}\) and morphisms \(q_X\colon Q\to X\) and \(q_Y\colon Q\to Y\)
with \(\varphi_X\circ q_X=\varphi_Y\circ q_Y\), there exists a unique morphism \(\Phi\colon Q\to P\) such that
\[\begin{tikzcd}
Q \arrow[ddddr,bend right=20,swap,"q_X"] \arrow[ddr,dashed,"\Phi"] \arrow[ddrrr,bend left=20,"q_Y"] & & &\\
& & & \\
& P \arrow[dd,swap,"p_X"] \arrow[rr,"p_Y"] & & Y \arrow[dd,"\varphi_Y"] \\
& & & \\
& X \arrow[rr,swap,"\varphi_X"] & & Z
\end{tikzcd}\]
is a commutative diagram. The pullback \((X\times_Z Y,p_X,p_Y)\) coincides with the limit of the diagram of type
\({\bf J}_\lrcorner\) in \({\bf C}\), where \({\bf J}_\lrcorner\) is the category with three objects
(corresponding to \(X\), \(Y\), and \(Z\)) and whose morphisms (besides the identity ones) are \(X\to Z\) and \(Y\to Z\).
\end{itemize}

Dually, the following are some of the most important examples of colimits in a category \({\bf C}\):
\begin{itemize}
\item Let \(X\), \(Y\) be two objects of \({\bf C}\) and \(a,b\colon X\to Y\) two morphisms in \({\bf C}\). Then the \emph{coequaliser} of
\(a,b\colon X\to Y\) is given by an object \({\rm Coeq}(a,b)\) of \({\bf C}\) together with a morphism \({\rm coeq}(a,b)\colon Y\to{\rm Coeq}(a,b)\)
with \({\rm coeq}(a,b)\circ a={\rm coeq}(a,b)\circ b\) verifying the following universal property: if \(u\colon Y\to F\) is a morphism in \({\bf C}\)
satisfying \(u\circ a=u\circ b\), then there exists a unique morphism \(\Phi\colon{\rm Coeq}(a,b)\to F\) such that \(\Phi\circ{\rm coeq}(a,b)=u\).
The coequaliser \(\big({\rm Coeq}(a,b),{\rm coeq}(a,b)\big)\) coincides with the colimit of the diagram of type \({\bf J}_\rightrightarrows\) in \({\bf C}\).
Observe also that \({\rm coeq}(a,b)\) is an epimorphism.
\item Suppose \({\bf C}\) has zero morphisms. Then the \emph{cokernel} of a morphism \(\varphi\colon X\to Y\) in \({\bf C}\) is
\[
\big({\rm Coker}(\varphi),{\rm coker}(\varphi)\big)\coloneqq\big({\rm Coeq}(\varphi,0_{XY}),{\rm coeq}(\varphi,0_{XY})\big),
\]
whenever the coequaliser of \(\varphi,0_{XY}\colon X\to Y\) exists.
\item Let \(X_\star=\{X_i\}_{i\in I}\) be a set of objects of \({\bf C}\). Then the \emph{coproduct} of \(X_\star\)
in \({\bf C}\) is an object \(\coprod^{\bf C}X_\star=\coprod_{i\in I}^{\bf C}X_i\) together with a family of morphisms
\(\big\{\iota_i\colon X_i\to\coprod^{\bf C}X_\star\big\}_{i\in I}\) verifying the following universal property: given an object
\(Y\) of \({\bf C}\) and a family of morphisms \(\{\varphi_i\colon X_i\to Y\}_{i\in I}\), there exists a unique morphism
\(\Phi\colon\coprod^{\bf C}X_\star\to Y\) such that \(\Phi\circ\iota_i=\varphi_i\) for every \(i\in I\). The coproduct
\(\big(\coprod^{\bf C}X_\star,\{\iota_i\}_{i\in I}\big)\) coincides with the colimit of the diagram of type \({\bf J}_I\)
in \({\bf C}\).
\item Let \((I,\leq)\) be a directed set. By a \emph{direct} (or \emph{inductive}) \emph{system} in \({\bf C}\) indexed by \(I\)
we mean a family of objects \(\{X_i\}_{i\in I}\), together with a family \(\{\varphi_{ij}\,:\,i,j\in I,\,i\leq j\}\) of morphisms
\(\varphi_{ij}\colon X_i\to X_j\) such that \(\varphi_{ii}={\rm id}_{X_i}\) for every \(i\in I\) and \(\varphi_{ik}=\varphi_{jk}\circ\varphi_{ij}\)
for every \(i,j,k\in I\) with \(i\leq j\leq k\). Then the \emph{direct} (or \emph{inductive}) \emph{limit} of
\(\big(\{X_i\}_{i\in I},\{\varphi_{ij}\}_{i\leq j}\big)\) is an object \(\varinjlim X_\star\) of \({\bf C}\),
together with a family \(\{\varphi_i\}_{i\in I}\) of morphisms \(\varphi_i\colon X_i\to\varinjlim X_\star\) such that
\(\varphi_j\circ\varphi_{ij}=\varphi_i\) for every \(i\leq j\) and verifying the following universal property:
given an object \(Y\) of \({\bf C}\) and morphisms \(\psi_i\colon X_i\to Y\) with \(\psi_j\circ\varphi_{ij}=\psi_i\)
for every \(i\leq j\), there exists a unique morphism \(\Phi\colon\varinjlim X_\star\to Y\) such that \(\Phi\circ\varphi_i=\psi_i\)
for every \(i\in I\). Moreover, \(\big(\varinjlim X_\star,\{\varphi_i\}_{i\in I}\big)\) coincides with the colimit
of the diagram of type \((I,\leq)\) in \({\bf C}\).
\item Let \(X\), \(Y\), \(Z\) be objects of \({\bf C}\). Let \(\varphi_X\colon Z\to X\) and \(\varphi_Y\colon Z\to Y\) be two given morphisms.
Then the \emph{pushout} of \(\varphi_X\) and \(\varphi_Y\) is an object \(P=X\sqcup_Z Y\) of \({\bf C}\), together with two morphisms
\(i_X\colon X\to P\) and \(i_Y\colon Y\to P\) with \(i_X\circ\varphi_X=i_Y\circ\varphi_Y\) verifying the
following universal property: given an object \(Q\) of \({\bf C}\) and morphisms \(j_X\colon X\to Q\) and \(j_Y\colon Y\to Q\)
with \(j_X\circ\varphi_X=j_Y\circ\varphi_Y\), there exists a unique morphism \(\Phi\colon P\to Q\) such that the diagram
\[\begin{tikzcd}
Z \arrow[dd,swap,"\varphi_X"] \arrow[rr,"\varphi_Y"] & & Y \arrow[dd,"i_Y"] \arrow[ddddr,bend left=20, "j_Y"] & \\
& & & \\
X \arrow[rr,swap,"i_X"] \arrow[ddrrr,bend right=20,swap,"j_X"] & & P \arrow[ddr,dashed,,swap,"\Phi"] & \\
& & & \\
& & & Q
\end{tikzcd}\]
commutes. The pushout \((X\sqcup_Z Y,i_X,i_Y)\) coincides with the colimit of the diagram of type \({\bf J}_\ulcorner\)
in \({\bf C}\), where by \({\bf J}_\ulcorner\) we mean the category with three objects (corresponding to \(X\), \(Y\),
and \(Z\)) whose morphisms (besides the identity ones) are \(Z\to X\) and \(Z\to Y\).
\end{itemize}
\begin{remark}{\rm
A warning about the terminology: differently from other authors, we only consider (co)products that are indexed by sets (not by classes).
These are sometimes called \emph{small (co)products}. Moreover, by a projective (resp.\ an inductive) limit
we mean the limit (resp.\ the colimit) of a diagram that is indexed by a directed set, while for some authors `projective limit'
(resp.\ `inductive limit') is a synonim of (not necessarily directed) `limit' (resp.\ `colimit').
\fr}\end{remark}

Next, we recall the notion of image and coimage of a morphism. Fix a category \({\bf C}\) that has zero morphisms and
admits all finite limits and colimits. Let \(\varphi\colon X\to Y\) be a morphism in \({\bf C}\). Then:
\begin{itemize}
\item The \emph{image} of \(\varphi\) is given by \(\big({\rm Im}(\varphi),{\rm im}(\varphi)\big)\coloneqq\big({\rm Eq}(i_1,i_2),{\rm eq}(i_1,i_2)\big)\),
where \((Y\sqcup_X Y,i_1,i_2)\) is the pushout of \(\varphi\) and \(\varphi\).
\item The \emph{coimage} of \(\varphi\) is given by \(\big({\rm Coim}(\varphi),{\rm coim}(\varphi)\big)\coloneqq\big({\rm Coeq}(p_1,p_2),{\rm coeq}(p_1,p_2)\big)\),
where \((Y\times_X Y,p_1,p_2)\) is the pullback of \(\varphi\) and \(\varphi\).
\end{itemize}
Given a category \({\bf C}\) having zero morphisms, we say that a monomorphism \(\varphi\colon X\to Y\) in \({\bf C}\) is \emph{normal}
if there exist an object \(Z\) and a morphism \(\eta\colon Y\to Z\) such that \((X,\varphi)\cong\big({\rm Ker}(\eta),{\rm ker}(\eta)\big)\).
Dually, we say that an epimorphism \(\psi\colon X\to Y\) in \({\bf C}\) is \emph{conormal} if there exist an object \(W\) and a morphism
\(\theta\colon W\to X\) such that \((Y,\psi)\cong\big({\rm Coker}(\theta),{\rm coker}(\theta)\big)\). The category \({\bf C}\) is said
to be \emph{normal} if every monomorphism in \({\bf C}\) is normal, \emph{conormal} if every epimorphism in \({\bf C}\) is conormal.
A category that is either normal or conormal is balanced, see e.g.\ \cite[Proposition 14.3]{mitchell1967theory}.
\subsubsection*{Completeness and cocompleteness}
A category \({\bf C}\) is said to be \emph{complete} if all small limits in \({\bf C}\) (i.e.\ limits of diagrams whose index category is small) exist.
Dually, \({\bf C}\) is said to be \emph{cocomplete} if all small colimits in \({\bf C}\) exist. A category that is both complete and cocomplete is
called a \emph{bicomplete category}. An example of a bicomplete category is \({\bf Set}\). The product \(\prod_{i\in I}^{\bf Set}X_i\) of a given family
of sets \(X_\star=\{X_i\}_{i\in I}\) is the Cartesian product \(\prod_{i\in I}X_i\), while the coproduct \(\coprod_{i\in I}^{\bf Set}X_i\) is the
disjoint union \(\bigsqcup_{i\in I}X_i\). Another example of a bicomplete category is \({\bf Ban}\), see for example \cite{SZ65}.
\medskip

The following result provides a criterion to detect complete and cocomplete categories. Albeit well-known to the experts, we report its proof for the usefulness of the reader.
\begin{theorem}\label{thm:suff_(co)complete}
A category in which all products and equalisers exist is complete. A category in which all coproducts and coequalisers exist is cocomplete.
\end{theorem}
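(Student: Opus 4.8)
The plan is to prove the first assertion and then deduce the second by duality: applying the first statement to the opposite category \({\bf C}^{\rm op}\), and recalling that a limit in \({\bf C}^{\rm op}\) is a colimit in \({\bf C}\), that products in \({\bf C}^{\rm op}\) are coproducts in \({\bf C}\), and that equalisers in \({\bf C}^{\rm op}\) are coequalisers in \({\bf C}\). So let \({\bf C}\) be a category admitting all (small) products and all equalisers, fix a small index category \({\bf J}\) and a diagram \(D\colon{\bf J}\to{\bf C}\); we must exhibit a limit of \(D\).

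First I would form two products. Since \({\bf J}\) is small, the collections \({\rm Ob}_{\bf J}\) and \({\rm Hom}_{\bf J}\) are sets, so the hypotheses allow us to set
\[
P\coloneqq\prod_{i\in{\rm Ob}_{\bf J}}^{\bf C}D(i),\qquad
Q\coloneqq\prod_{\phi\in{\rm Hom}_{\bf J}}^{\bf C}D\big({\rm cod}(\phi)\big),
\]
with projections \(\pi_i\colon P\to D(i)\) and \(\rho_\phi\colon Q\to D({\rm cod}(\phi))\). The universal property of \(Q\) then yields two morphisms \(s,t\colon P\to Q\) determined by
\[
\rho_\phi\circ s=\pi_{{\rm cod}(\phi)},\qquad
\rho_\phi\circ t=D(\phi)\circ\pi_{{\rm dom}(\phi)}\qquad\text{for every }\phi\in{\rm Hom}_{\bf J}.
\]
Let \(\big(L,e\big)\coloneqq\big({\rm Eq}(s,t),{\rm eq}(s,t)\big)\) be their equaliser and define \(\lambda_i\coloneqq\pi_i\circ e\colon L\to D(i)\) for each object \(i\) of \({\bf J}\). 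The claim is that \((L,\lambda_\star)\) is a limit of \(D\).

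Then I would verify the two required properties. That \((L,\lambda_\star)\) is a cone is immediate: for \(\phi\colon i\to j\), composing the equaliser identity \(s\circ e=t\circ e\) with \(\rho_\phi\) gives \(\pi_j\circ e=D(\phi)\circ\pi_i\circ e\), i.e.\ \(\lambda_j=D(\phi)\circ\lambda_i\). For the universal property, take an arbitrary cone \((X,\varphi_\star)\) to \(D\). The universal property of \(P\) supplies a unique \(\psi\colon X\to P\) with \(\pi_i\circ\psi=\varphi_i\) for all \(i\); using the cone identity \(D(\phi)\circ\varphi_i=\varphi_j\) one computes \(\rho_\phi\circ s\circ\psi=\varphi_j=\rho_\phi\circ t\circ\psi\) for every \(\phi\), whence \(s\circ\psi=t\circ\psi\) by the uniqueness clause of the universal property of \(Q\). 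Hence \(\psi\) factors through the equaliser: there is a unique \(\Phi\colon X\to L\) with \(e\circ\Phi=\psi\), and then \(\lambda_i\circ\Phi=\pi_i\circ\psi=\varphi_i\) for all \(i\). Conversely, any \(\Phi'\colon X\to L\) with \(\lambda_i\circ\Phi'=\varphi_i\) for all \(i\) satisfies \(\pi_i\circ(e\circ\Phi')=\varphi_i\) for all \(i\), so \(e\circ\Phi'=\psi\) by uniqueness in the product, and therefore \(\Phi'=\Phi\) because \(e={\rm eq}(s,t)\) is a monomorphism. This gives the universal property of the limit.

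I do not expect a genuine obstacle; the only point requiring care is the bookkeeping for \(Q\), which must be indexed by the \emph{morphisms} of \({\bf J}\) (not by ordered pairs of objects), with the \(\phi\)-component living in \(D({\rm cod}(\phi))\), so that the two natural maps ``project onto the codomain coordinate'' and ``apply \(D(\phi)\) to the domain coordinate'' are well defined — this is exactly the setup that makes the equaliser of \(s\) and \(t\) carve out precisely the cones over \(D\). The smallness of \({\bf J}\) is used solely to ensure that \(P\) and \(Q\) are \emph{small} products, so that the standing hypothesis applies.
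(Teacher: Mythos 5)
Your proposal is correct and follows essentially the same route as the paper's proof: the same two products (indexed by \({\rm Ob}_{\bf J}\) and by \({\rm Hom}_{\bf J}\) with \(\phi\)-component \(D({\rm cod}(\phi))\)), the same pair of parallel morphisms, and the limit realised as their equaliser, with the second assertion obtained by duality. The only cosmetic difference is in the uniqueness step, where you invoke that \({\rm eq}(s,t)\) is a monomorphism while the paper appeals to the uniqueness clause in the universal property of the equaliser — these are interchangeable.
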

\begin{proof}
We prove only the first part of the statement, as the second one follows by a dual argument. Let \({\bf C}\) be a category
in which all products and equalisers exist. Let \(D\colon{\bf J}\to{\bf C}\) be a small diagram. Then the products
\(\big(\prod_{i\in{\rm Ob}_{\bf J}}^{\bf C}D(i),\pi_\star\big)\) and
\(\big(\prod_{\phi\in{\rm Hom}_{\bf J}}^{\bf C}D({\rm cod}(\phi)),\eta_\star\big)\) exist in \({\bf C}\).
For brevity, let us set \(Y\coloneqq\prod_{i\in{\rm Ob}_{\bf J}}^{\bf C}D(i)\) and
\(Z\coloneqq\prod_{\phi\in{\rm Hom}_{\bf J}}^{\bf C}D({\rm cod}(\phi))\).
Observe that there exist two (uniquely determined) morphisms \(a,b\colon Y\to Z\) such that
\[\begin{tikzcd}
Y \arrow[r,dashed,"a"] \arrow[dr,swap,"\pi_{{\rm cod}(\phi)}"] & Z \arrow[d,"\eta_\phi"] &
Y \arrow[l,swap,dashed,"b"] \arrow[dl,"D(\phi)\circ\pi_{{\rm dom}(\phi)}"] \\
& D({\rm cod}(\phi)) &
\end{tikzcd}\]
is a commutative diagram for every \(\phi\in{\rm Hom}_{\bf J}\).
Let us define \(\lambda_i\coloneqq\pi_i\circ{\rm eq}(a,b)\colon{\rm Eq}(a,b)\to D(i)\) for every \(i\in{\rm Ob}_{\bf J}\).
Given a morphism \(\phi\colon i\to j\) in \({\bf J}\), one has that
\[\begin{split}
D(\phi)\circ\lambda_i&=D(\phi)\circ\pi_{{\rm dom}(\phi)}\circ{\rm eq}(a,b)=\eta_\phi\circ b\circ{\rm eq}(a,b)=\eta_\phi\circ a\circ{\rm eq}(a,b)\\
&=\pi_{{\rm cod}(\phi)}\circ{\rm eq}(a,b)=\lambda_j.
\end{split}\]
This shows that \(({\rm Eq}(a,b),\lambda_\star)\) is a cone to \(D\). Moreover, if \((X,\varphi_\star)\) is a cone to \(D\), then there exists
a unique morphism \(u\colon X\to Y\) such that \(\pi_i\circ u=\varphi_i\) for every \(i\in{\rm Ob}_{\bf J}\). In particular, the diagram
\[\begin{tikzcd}
X \arrow[r,"u"] \arrow[ddr,swap,"\varphi_{{\rm cod}(\phi)}"] & Y \arrow[r,"a"] \arrow[dd,"\pi_{{\rm cod}(\phi)}"]
& Z \arrow[dd,"\eta_\phi"] & Y \arrow[l,swap,"b"] \arrow[dd,swap,"\pi_{{\rm dom}(\phi)}"] & X \arrow[l,swap,"u"] \arrow[ddl,"\varphi_{{\rm dom}(\phi)}"] \\
& & & & \\
& D({\rm cod}(\phi)) \arrow[r,equal] & D({\rm cod}(\phi)) & D({\rm dom}(\phi)) \arrow[l,"D(\phi)"] &
\end{tikzcd}\]
commutes for every \(\phi\in{\rm Hom}_{\bf J}\). Since \(D(\phi)\circ\varphi_{{\rm dom}(\phi)}=\varphi_{{\rm cod}(\phi)}\), we deduce that
\(a\circ u=b\circ u\), thus there exists a unique morphism \(\Phi\colon X\to{\rm Eq}(a,b)\) such that \({\rm eq}(a,b)\circ\Phi=u\). Observe that
\[
\lambda_i\circ\Phi=\pi_i\circ{\rm eq}(a,b)\circ\Phi=\pi_i\circ u=\varphi_i\quad\text{ for every }i\in{\rm Ob}_{\bf J}.
\]
Finally, we claim that \(\Phi\) is the unique morphism satisfying \(\lambda_i\circ\Phi=\varphi_i\) for every \(i\in{\rm Ob}_{\bf J}\).
To prove it, fix any morphism \(\Psi\colon X\to{\rm Eq}(a,b)\) such that \(\lambda_i\circ\Psi=\varphi_i\) for every \(i\in{\rm Ob}_{\bf J}\).
This means that \(\pi_i\circ{\rm eq}(a,b)\circ\Psi=\varphi_i\) holds for every \(i\in{\rm Ob}_{\bf J}\), which forces \({\rm eq}(a,b)\circ\Psi=u\)
by the uniqueness of \(u\), and thus \(\Psi=\Phi\) by the uniqueness of \(\Phi\). All in all, \(({\rm Eq}(a,b),\lambda_\star)\) is the limit of \(D\).
\end{proof}
\subsubsection*{Limits and colimits as functors}
If \({\bf J}\) is a small index category and all the diagrams of type \({\bf J}\) in a category \({\bf C}\) have
limits, then there exists a unique functor \({\rm Lim}_{{\bf J},{\bf C}}\colon{\bf C}^{\bf J}\to{\bf C}\), called
the \emph{limit functor} from \({\bf J}\) to \({\bf C}\), such that \({\rm Lim}_{{\bf J},{\bf C}}(D)\) is (the object
underlying) the limit of \(D\) for any diagram \(D\colon{\bf J}\to{\bf C}\). Dually, if all the diagrams of type \({\bf J}\)
in \({\bf C}\) have colimits, then there exists a unique functor \({\rm Colim}_{{\bf J},{\bf C}}\colon{\bf C}^{\bf J}\to{\bf C}\),
called the \emph{colimit functor} from \({\bf J}\) to \({\bf C}\), such that \({\rm Colim}_{{\bf J},{\bf C}}(D)\) is (the object
underlying) the colimit of \(D\) for any diagram \(D\colon{\bf J}\to{\bf C}\). In the special cases of inverse and direct limits,
we write \(\varprojlim_I\) and \(\varinjlim_I\) instead of \({\rm Lim}_{(I,\leq)^{\rm op},{\bf C}}\) and
\({\rm Colim}_{(I,\leq),{\bf C}}\), respectively.
\begin{remark}\label{rmk:kernel_of_natural_hom}{\rm
Let \({\bf C}\) be a category having zero morphisms and where all kernels exist, and \({\bf J}\) a small index category.
Let \(\theta_\star\colon D_1\to D_2\) be a morphism in \({\bf C}^{\bf J}\). Then the kernel of \(\theta_\star\)
in \({\bf C}^{\bf J}\) exists:
\begin{itemize}
\item The diagram \({\rm Ker}(\theta_\star)\colon{\bf J}\to{\bf C}\) is given by \({\rm Ker}(\theta_\star)(i)={\rm Ker}(\theta_i)\)
for every \(i\in{\rm Ob}_{\bf J}\), and for any morphism  \(\phi\colon i\to j\) in \({\bf J}\) we have that
\({\rm Ker}(\theta_\star)(\phi)\colon{\rm Ker}(\theta_i)\to{\rm Ker}(\theta_j)\) is the unique morphism \(\Phi\)
satisfying \({\rm ker}(\theta_j)\circ\Phi=D_1(\phi)\circ{\rm ker}(\theta_i)\). Both the existence and the uniqueness
of \(\Phi\) are consequences of the commutativity of the following diagram:
\[\begin{tikzcd}
{\rm Ker}(\theta_i) \arrow[d,swap,dashed,"\Phi"] \arrow[r,"{\rm ker}(\theta_i)"] & D_1(i) \arrow[d,swap,"D_1(\phi)"] \arrow[r,"\theta_i"]
& D_2(i) \arrow[d,"D_2(\phi)"] \arrow[r] & \{0\} \arrow[d,equal] \\
{\rm Ker}(\theta_j) \arrow[r,swap,"{\rm ker}(\theta_j)"] & D_1(j) \arrow[r,swap,"\theta_j"] & D_2(j) \arrow{r} & \{0\}
\end{tikzcd}\]
\item The morphism \({{\rm ker}(\theta_\star)}\) coincides with the natural transformation
\({{\rm ker}(\theta_\star)}_\star\colon{\rm Ker}(\theta_\star)\to D_1\), which is defined as
\({{\rm ker}(\theta_\star)}_i\coloneqq{\rm ker}(\theta_i)\colon{\rm Ker}(\theta_i)\to D_1(i)\) for every \(i\in{\rm Ob}_{\bf J}\).
\end{itemize}
The dual statement holds for the cokernel of \(\theta_\star\) (in the case where all cokernels exist in \({\bf C}\)).
\fr}\end{remark}
Given two categories \({\bf C}\), \({\bf D}\), and two functors \(F\colon{\bf C}\to{\bf D}\) and \(G\colon{\bf D}\to{\bf C}\),
we say that \(F\) is the \emph{right adjoint} to \(G\), or that \(G\) is the \emph{left adjoint} to \(F\) (for short \(G\dashv F\)),
if for any object \(X\) of \({\bf C}\) there exists a morphism \(\varepsilon_X\colon G(F(X))\to X\) with the following property: given
an object \(Y\) of \({\bf D}\) and a morphism \(\varphi\colon G(Y)\to X\), there exists a unique morphism \(\psi\colon Y\to F(X)\) such that
\[\begin{tikzcd}
G(Y) \arrow[d,dashed,swap,"F(\psi)"] \arrow[dr,"\varphi"] & \\
G(F(X)) \arrow[r,swap,"\varepsilon_X"] & X
\end{tikzcd}\]
commutes.
Each right adjoint functor \(F\colon{\bf C}\to{\bf D}\) is \emph{continuous}, i.e.\ it preserves limits: if the limit \((L,\lambda_\star)\) of
a diagram \(D\colon{\bf J}\to{\bf C}\) exists, then the limit \((L^F,\lambda^F_\star)\) of the diagram \(F\circ D\colon{\bf J}\to{\bf D}\) exists
as well, and the unique morphism \(\Phi\colon F(L)\to L^F\) such that
\[\begin{tikzcd}
F(L) \arrow[dr,swap,"F(\lambda_i)"] \arrow[r,dashed,"\Phi"] & L^F \arrow[d,"\lambda^F_i"] \\
& F(D(i))
\end{tikzcd}\]
is a commutative diagram for every \(i\in{\rm Ob}_{\bf J}\) is an isomorphism. In particular, each right adjoint functor is \emph{left exact},
i.e.\ it commutes with finite limits. Dually, each left adjoint functor is \emph{cocontinuous}, i.e.\ it preserves colimits: if the colimit
\((C,c_\star)\) of a diagram \(D\colon{\bf J}\to{\bf C}\) exists, then also the colimit \((C^F,c^F_\star)\) of \(F\circ D\colon{\bf J}\to{\bf D}\)
exists, and the unique morphism \(\Psi\colon C^F\to F(C)\) such that
\[\begin{tikzcd}
F(D(i)) \arrow[d,swap,"c^F_i"] \arrow[dr,"F(c_i)"] & \\
C^F \arrow[r,dashed,swap,"\Psi"] & F(C)
\end{tikzcd}\]
is a commutative diagram for every \(i\in{\rm Ob}_{\bf J}\) is an isomorphism.
In particular, each left adjoint functor is \emph{right exact}, i.e.\ it commutes with finite colimits.
If \({\bf C}\) is a complete (resp.\ cocomplete) category, then for any small index category \({\bf J}\) it holds that
\(\Delta_{{\bf J},{\bf C}}\dashv{\rm Lim}_{{\bf J},{\bf C}}\) (resp.\ \({\rm Colim}_{{\bf J},{\bf C}}\dashv\Delta_{{\bf J},{\bf C}}\)),
thus in particular \({\rm Lim}_{{\bf J},{\bf C}}\) is continuous (resp.\ \({\rm Colim}_{{\bf J},{\bf C}}\) is cocontinuous).
\section{The category of Banach \texorpdfstring{\(L^0(\mathbb X)\)}{L0(X)}-modules}
Let \(\mathbb X\) be a given \(\sigma\)-finite measure space. Then we denote by \({\bf BanMod}_{\mathbb X}\) the category of Banach \(L^0(\mathbb X)\)-modules,
where a morphism \(\varphi\colon\mathscr M\to\mathscr N\) between two Banach \(L^0(\mathbb X)\)-modules \(\mathscr M\) and \(\mathscr N\) is defined as an
\(L^0(\mathbb X)\)-linear operator (i.e.\ a homomorphism of \(L^0(\mathbb X)\)-modules) that satisfies
\[
|\varphi(v)|\leq|v|\quad\text{ for every }v\in\mathscr M.
\]
The morphisms in \({\rm Hom}_{{\bf BanMod}_{\mathbb X}}(\mathscr M,\mathscr N)\) are exactly those
\(\varphi\in\textsc{Hom}(\mathscr M,\mathscr N)\) satisfying \(|\varphi|\leq 1\). Also, the isomorphisms
in \({\bf BanMod}_{\mathbb X}\) are exactly the isomorphisms of Banach \(L^0(\mathbb X)\)-modules.
\begin{remark}{\rm
One could also consider the category \({\bf BanMod}_{\mathbb X}^0\), where the morphisms between two Banach \(L^0(\mathbb X)\)-modules
\(\mathscr M\) and \(\mathscr N\) are given exactly by the elements of \(\textsc{Hom}(\mathscr M,\mathscr N)\). Observe that in this category
a morphism \(\varphi\colon\mathscr M\to\mathscr N\) is an isomorphism if and only if it is bijective and there exist
\(g_1,g_2\in L^0(\mathbb X)\) with \(g_2\geq g_1>0\) such that \(g_1|v|\leq|\varphi(v)|\leq g_2|v|\)
for every \(v\in\mathscr M\). Our choice of working with \({\bf BanMod}_{\mathbb X}\) is due to the fact that,
trivially, arbitrary limits and colimits cannot exist in \({\bf BanMod}_{\mathbb X}^0\).
However, by suitably adapting the results we are going to present, one can show that \({\bf BanMod}_{\mathbb X}^0\) is \emph{finitely bicomplete}
(i.e.\ it admits finite limits and colimits).
\fr}\end{remark}
\subsection{Basic properties of \texorpdfstring{\({\bf BanMod}_{\mathbb X}\)}{BanModX}}
Let \(\mathbb X\) be a \(\sigma\)-finite measure space. Then it can be readily checked that \({\bf BanMod}_{\mathbb X}\)
is a pointed category, whose zero object \(0_{{\bf BanMod}_{\mathbb X}}\) is given by the trivial Banach \(L^0(\mathbb X)\)-module
consisting uniquely of the zero element. Moreover, \({\bf BanMod}_{\mathbb X}\) is locally small: given two Banach \(L^0(\mathbb X)\)-modules
\(\mathscr M\) and \(\mathscr N\), we observed that the morphisms \(\mathscr M\to\mathscr N\) form a subset of
\(\textsc{Hom}(\mathscr M,\mathscr N)\), thus in particular \({\rm card}\big({\rm Hom}_{{\bf BanMod}_{\mathbb X}}(\mathscr M,\mathscr N)\big)
\leq{\rm card}(\mathscr N)^{{\rm card}(\mathscr M)}\).
\begin{proposition}
Let \(\mathbb X\) be a \(\sigma\)-finite measure space. Then the category \({\bf BanMod}_{\mathbb X}\) is not small.
\end{proposition}
\begin{proof}
Given any cardinal \(\kappa\), we fix a set \(S_\kappa\) of cardinality \(\kappa\). Recall that if \(\kappa_1\), \(\kappa_2\) are different
cardinals, then \(\mathscr H_{\mathbb X}(S_{\kappa_1})\) is not isomorphic to \(\mathscr H_{\mathbb X}(S_{\kappa_2})\). Since the collection
of all cardinals is a proper class (i.e.\ it is not a set), we conclude that the collection of all Hilbert \(L^0(\mathbb X)\)-modules is a proper
class as well. This implies that the category \({\bf BanMod}_{\mathbb X}\) is not small, as we claimed.
\end{proof}
\begin{proposition}
Let \(\mathbb X\) be a \(\sigma\)-finite measure space and \(\varphi\colon\mathscr M\to\mathscr N\)
a morphism in \({\bf BanMod}_{\mathbb X}\). Then the following properties hold:
\begin{itemize}
\item \(\varphi\) is a monomorphism if and only if it is injective.
\item \(\varphi\) is an epimorphism if and only if \(\varphi(\mathscr M)\) is dense in \(\mathscr N\).
\end{itemize}
\end{proposition}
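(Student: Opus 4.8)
The plan is to prove each of the two equivalences in turn, in each case establishing the easy implication by a direct categorical argument and the harder one by exhibiting suitable test morphisms. For the monomorphism statement: the implication ``injective $\Rightarrow$ monomorphism'' is immediate, since if $\varphi$ is injective as a map of sets and $\varphi\circ\psi_1=\varphi\circ\psi_2$ then $\psi_1=\psi_2$ pointwise. For the converse, suppose $\varphi$ is a monomorphism and pick $v\in\mathscr M$ with $\varphi(v)=0$; I would consider the principal submodule $L^0(\mathbb X)\cdot v\subseteq\mathscr M$ generated by $v$, or more simply test against the two parallel morphisms $\psi_1,\psi_2\colon 0_{{\bf BanMod}_{\mathbb X}}\to\mathscr M$ replaced by morphisms from a free-type module. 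Concretely, the cleanest route is: consider the map $\iota\colon L^0(\mathbb X)\to\mathscr M$, $f\mapsto f\cdot v$ — this is $L^0(\mathbb X)$-linear, but it is a morphism in ${\bf BanMod}_{\mathbb X}$ only after rescaling, so instead set $w\coloneqq v$ and use $\psi_1\coloneqq\iota$ against $\psi_2\coloneqq 0$ on the submodule where $|v|\le 1$. To avoid the normalization nuisance, I would argue on a fixed set $E_n\coloneqq\{n-1\le|v|<n\}$, partitioning $\X$, and on each $E_n$ rescale so that $\nchi_{E_n}v/n$ has pointwise norm $\le 1$; then the morphism $L^0(\mathbb X)\to\mathscr M$ sending $f\mapsto \sum_n \nchi_{E_n} f \cdot v/n$ is a genuine morphism in the category, and $\varphi$ composed with it equals the zero morphism composed with it, whence by left-cancellativity the morphism itself is zero, forcing $v=0$.

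For the epimorphism statement: the implication ``dense range $\Rightarrow$ epimorphism'' follows because any two morphisms $\psi_1,\psi_2\colon\mathscr N\to\mathscr Z$ that agree on the dense submodule $\varphi(\mathscr M)$ must agree on all of $\mathscr N$, since morphisms in ${\bf BanMod}_{\mathbb X}$ are $1$-Lipschitz for $\sfd_{\mathscr N}$ (as $|\psi_i(v)|\le|v|$) hence continuous, and $L^0$-linear maps that are continuous and agree on a dense set agree everywhere. For the converse, suppose $\varphi$ is an epimorphism; I must show $\mathscr N_0\coloneqq{\rm cl}_{\mathscr N}(\varphi(\mathscr M))$ equals $\mathscr N$. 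Here $\mathscr N_0$ is a closed $L^0(\mathbb X)$-submodule, hence a Banach $L^0(\mathbb X)$-module, so I can form the quotient $\mathscr N/\mathscr N_0$, which is again a Banach $L^0(\mathbb X)$-module (by the ``Quotient'' construction in the list of examples), with quotient morphism $\pi\colon\mathscr N\to\mathscr N/\mathscr N_0$ satisfying $|\pi(v)|\le|v|$ — so $\pi$ is a morphism in ${\bf BanMod}_{\mathbb X}$. Then $\pi\circ\varphi=0=0_{\mathscr M,\mathscr N/\mathscr N_0}\circ\varphi$ (the latter being the composite with the zero morphism $\mathscr N\to\mathscr N/\mathscr N_0$), since $\varphi(\mathscr M)\subseteq\mathscr N_0$ maps to $0$ under $\pi$. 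Right-cancellativity then gives $\pi=0$, which means $|\pi(v)|=0$ for all $v\in\mathscr N$, i.e.\ $\bigwedge_{w\in\mathscr N_0}|v-w|=0$ for all $v$, i.e.\ $\mathscr N_0=\mathscr N$.

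The main obstacle, such as it is, lies in the ``monomorphism $\Rightarrow$ injective'' direction: unlike in ${\bf Ban}$, one cannot simply test against $\R$, because the natural map $f\mapsto f\cdot v$ from $L^0(\mathbb X)$ to $\mathscr M$ need not be a morphism in ${\bf BanMod}_{\mathbb X}$ (it fails the contractivity bound $|\varphi(f)|\le|f|$ unless $|v|\le\nchi_\X$). The remedy is the truncation-and-rescaling device described above, using a partition $(E_n)_{n\in\N}$ of $\X$ adapted to the level sets of $|v|$ and the unconditional convergence of series $\sum_n\nchi_{E_n}\cdot v_n$ in a Banach $L^0(\mathbb X)$-module (Remark \ref{rmk:about_conv_L0}); this produces a genuine morphism whose cancellation yields $v=0$. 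Everything else is routine verification that the maps involved are $L^0(\mathbb X)$-linear and $1$-Lipschitz and that $\sfd_{\mathscr N}$-continuity propagates equalities from dense submodules.
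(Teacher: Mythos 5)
Your proposal is correct, and the epimorphism half is essentially the paper's own argument: you quotient \(\mathscr N\) by \({\rm cl}_{\mathscr N}(\varphi(\mathscr M))\), compare the projection with the zero morphism, and cancel \(\varphi\) on the right; the converse via continuity and density is the same in both. Where you diverge is the direction ``monomorphism \(\Rightarrow\) injective''. The paper tests \(\varphi\) against the kernel submodule \(\varphi^{-1}(\{0\})\), using the inclusion map and the null map \(\varphi^{-1}(\{0\})\to\mathscr M\) as the two parallel morphisms: both composites with \(\varphi\) vanish, so the inclusion equals the null map and the kernel is trivial. This sidesteps entirely the ``normalisation nuisance'' you identify, since the inclusion of a submodule is automatically a morphism. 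Your route instead tests against \(L^0(\mathbb X)\) via the truncated and rescaled evaluation \(f\mapsto\sum_n\nchi_{E_n}f\cdot v/n\) on the partition \(E_n=\{n-1\leq|v|<n\}\); the contractivity bound \(|v|/n<1\) on \(E_n\) holds, the composite with \(\varphi\) is zero, cancellation kills the morphism, and multiplying its value at \(\nchi_\X\) by \(\sum_n n\nchi_{E_n}\in L^0(\mathbb X)\) recovers \(v=0\). This is longer but perfectly sound, and it buys something the paper's proof does not state: \(L^0(\mathbb X)\) separates parallel morphisms into any Banach \(L^0(\mathbb X)\)-module (a generator-type property), which is occasionally useful in its own right. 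The paper's choice of test object is the more economical one; yours is the more informative one.
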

\begin{proof}
Suppose \(\varphi\) is a monomorphism. Denote by \(\psi_1\colon\varphi^{-1}(\{0\})\to\mathscr M\) and
\(\psi_2\colon\varphi^{-1}(\{0\})\to\mathscr M\) the inclusion map and the null map, respectively.
Since both \(\varphi\circ\psi_1\) and \(\varphi\circ\psi_2\) coincide with the null map from
\(\varphi^{-1}(\{0\})\) to \(\mathscr N\), we deduce that \(\psi_1=\psi_2\) and thus \(\varphi^{-1}(\{0\})=\{0\}\),
whence the injectivity of \(\varphi\) follows by linearity. Conversely, suppose \(\varphi\) is injective. Given any
Banach \(L^0(\mathbb X)\)-module \(\mathscr P\) and any two morphisms \(\psi_1,\psi_2\colon\mathscr P\to\mathscr M\)
satisfying \(\varphi\circ\psi_1=\varphi\circ\psi_2\), we have that \(\psi_1=\psi_2\), otherwise there would exist
an element \(z\in\mathscr P\) such that \(\psi_1(z)\neq\psi_2(z)\) and thus accordingly
\(\varphi\big(\psi_1(z)\big)\neq\varphi\big(\psi_2(z)\big)\) by the injectivity of \(\varphi\).
This shows that \(\varphi\) is a monomorphism.

Suppose \(\varphi\) is an epimorphism. Define \(\mathscr Q\coloneqq\mathscr N/{\rm cl}_{\mathscr N}(\varphi(\mathscr M))\).
Denote by \(\psi_1\colon\mathscr N\to\mathscr Q\) the canonical projection map on the quotient and by
\(\psi_2\colon\mathscr N\to\mathscr Q\) the null map. Observe that
\[
\psi_1\big(\varphi(v)\big)=\varphi(v)+{\rm cl}_{\mathscr N}(\varphi(\mathscr M))
={\rm cl}_{\mathscr N}(\varphi(\mathscr M))\quad\text{ for every }v\in\mathscr M,
\]
thus \(\psi_1\circ\varphi\) and \(\psi_2\circ\varphi\) are the null map. Hence, we have
\(\psi_1=\psi_2\), which implies \({\rm cl}_{\mathscr N}(\varphi(\mathscr M))=\mathscr N\).
Conversely, suppose \(\varphi(\mathscr M)\) is dense in \(\mathscr N\). Choose any Banach
\(L^0(\mathbb X)\)-module \(\mathscr P\) and any two morphisms \(\psi_1,\psi_2\colon\mathscr N\to\mathscr P\)
with \(\psi_1\circ\varphi=\psi_2\circ\varphi\). This means that \(\psi_1=\psi_2\) on the dense subspace
\(\varphi(\mathscr M)\) of \(\mathscr N\), thus accordingly \(\psi_1=\psi_2\) on the whole \(\mathscr N\).
This shows that \(\varphi\) is an epimorphism.
\end{proof}
\begin{example}\label{ex:BanMod_not_balanced}{\rm
Let \(\mathbb X\) be a given \(\sigma\)-finite measure space. Recall that \(c_0\) stands for the space of all sequences
\((a_n)_{n\in\N}\in\R^\N\) such that \(a_n\to 0\) as \(n\to\infty\), and that \(c_0\) is a separable Banach space if
endowed with the componentwise operations and the supremum norm \(\big\|(a_n)_{n\in\N}\big\|_{c_0}\coloneqq\sup_{n\in\N}|a_n|\).
Thanks to the density of simple maps in the Banach \(L^0(\mathbb X)\)-module \(L^0(\mathbb X;c_0)\), there exists a unique
morphism \(\varphi\colon L^0(\mathbb X;c_0)\to L^0(\mathbb X;c_0)\) in the category \({\bf BanMod}_{\mathbb X}\) such that
\[
\varphi\big(\nchi_\X(a_n)_{n\in\N}\big)=\nchi_\X(a_n/n)_{n\in\N}\quad\text{ for every }(a_n)_{n\in\N}\in c_0.
\]
Clearly, \(\varphi\) is injective and thus a monomorphism in \({\bf BanMod}_{\mathbb X}\). Moreover, the range of \(\varphi\) contains all
simple maps from \(\mathbb X\) to \(c_{00}\), where \(c_{00}\) stands for the space consisting of those sequences \((a_n)_{n\in\N}\)
such that \(a_n=0\) holds for all but finitely many \(n\in\N\). Since \(c_{00}\) is dense in \(c_0\), it follows that \(\varphi(L^0(\mathbb X;c_0))\)
is dense in \(L^0(\mathbb X;c_0)\) and thus \(\varphi\) is an epimorphism in \({\bf BanMod}_{\mathbb X}\). However, we have that the element
\(\nchi_\X(1/n)_{n\in\N}\in L^0(\mathbb X;c_0)\) does not belong to \(\varphi(L^0(\mathbb X;c_0))\), which means that \(\varphi\) is not surjective.
In particular, the morphism \(\varphi\) is not an isomorphism in \({\bf BanMod}_{\mathbb X}\).
\fr}\end{example}
\begin{remark}{\rm
Example \ref{ex:BanMod_not_balanced} shows that, given a \(\sigma\)-finite measure space \(\mathbb X\), the category
\({\bf BanMod}_{\mathbb X}\) is not balanced, which implies that it is neither normal nor conormal.
\fr}\end{remark}
\subsection{Limits and colimits in \texorpdfstring{\({\bf BanMod}_{\mathbb X}\)}{BanModX}}
In this section, we prove that \({\bf BanMod}_{\mathbb X}\) is bicomplete.
\subsubsection*{Kernels and cokernels in \({\bf BanMod}_{\mathbb X}\)}
We begin by proving that (co)kernels exist in \({\bf BanMod}_{\mathbb X}\):
\begin{theorem}[Kernels in \({\bf BanMod}_{\mathbb X}\)]\label{thm:kernel_BanMod}
Let \(\mathbb X\) be a \(\sigma\)-finite measure space and \(\varphi\colon\mathscr M\to\mathscr N\) a morphism between Banach
\(L^0(\mathbb X)\)-modules \(\mathscr M\), \(\mathscr N\). Then the kernel of \(\varphi\) exists and is given by
\[
{\rm Ker}(\varphi)\cong\varphi^{-1}(\{0\}),
\]
together with the inclusion map \({\rm ker}(\varphi)\colon\varphi^{-1}(\{0\})\to\mathscr M\).
In particular, the equaliser of any two morphisms \(\varphi,\psi\colon\mathscr M\to\mathscr N\) exists and is given by
\begin{equation}\label{eq:equaliser_BanMod}
\big({\rm Eq}(\varphi,\psi),{\rm eq}(\varphi,\psi)\big)\cong
\bigg({\rm Ker}\Big(\frac{\varphi-\psi}{2}\Big),{\rm ker}\Big(\frac{\varphi-\psi}{2}\Big)\bigg).
\end{equation}
\end{theorem}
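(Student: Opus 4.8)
The plan is to verify directly that $\big(\varphi^{-1}(\{0\}),{\rm ker}(\varphi)\big)$ satisfies the universal property of the equaliser of $\varphi$ and $0_{\mathscr M\mathscr N}$, and then to deduce the statement about equalisers of arbitrary pairs via the reduction $\mathrm{Eq}(\varphi,\psi)=\mathrm{Ker}\big(\tfrac{\varphi-\psi}{2}\big)$. First I would observe that $\varphi^{-1}(\{0\})$ is a Banach $L^0(\mathbb X)$-module: it is an $L^0(\mathbb X)$-submodule of $\mathscr M$ (by $L^0(\mathbb X)$-linearity of $\varphi$), it is closed in $\mathscr M$ (being the preimage of the closed singleton $\{0\}$ under the continuous map $\varphi$), and hence it inherits completeness — this is exactly the ``null space'' construction listed among the examples of Banach $L^0(\mathbb X)$-modules. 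The inclusion map ${\rm ker}(\varphi)\colon\varphi^{-1}(\{0\})\to\mathscr M$ is $L^0(\mathbb X)$-linear and norm-preserving, so it is a morphism in ${\bf BanMod}_{\mathbb X}$, and clearly $\varphi\circ{\rm ker}(\varphi)=0_{\mathscr M\mathscr N}={\rm ker}(\varphi)$ composed with the zero morphism.

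Next I would check the universal property. Let $u\colon E\to\mathscr M$ be a morphism in ${\bf BanMod}_{\mathbb X}$ with $\varphi\circ u=0_{E\mathscr N}$. Then $u(v)\in\varphi^{-1}(\{0\})$ for every $v\in E$, so $u$ corestricts to a map $\Phi\colon E\to\varphi^{-1}(\{0\})$, and $\Phi$ is $L^0(\mathbb X)$-linear and satisfies $|\Phi(v)|=|u(v)|\le|v|$ for every $v\in E$ (the pointwise norm on $\varphi^{-1}(\{0\})$ being the restriction of that of $\mathscr M$), hence $\Phi$ is a morphism in ${\bf BanMod}_{\mathbb X}$. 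By construction ${\rm ker}(\varphi)\circ\Phi=u$, and $\Phi$ is the unique morphism with this property, since ${\rm ker}(\varphi)$ is injective. This establishes that $\big(\varphi^{-1}(\{0\}),{\rm ker}(\varphi)\big)$ is the kernel of $\varphi$, i.e.\ the equaliser of $\varphi$ and $0_{\mathscr M\mathscr N}$.

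Finally, for the general statement, given $\varphi,\psi\colon\mathscr M\to\mathscr N$ I would note that $\tfrac{\varphi-\psi}{2}$ is again a morphism in ${\bf BanMod}_{\mathbb X}$: it is $L^0(\mathbb X)$-linear, and $\big|\tfrac{\varphi-\psi}{2}(v)\big|\le\tfrac{1}{2}(|\varphi(v)|+|\psi(v)|)\le|v|$ for every $v\in\mathscr M$ by the triangle inequality for the pointwise norm. The key point is then the elementary equivalence, for $v\in\mathscr M$, that $\varphi(v)=\psi(v)$ if and only if $\tfrac{\varphi-\psi}{2}(v)=0$; this identifies the underlying set of $\mathrm{Eq}(\varphi,\psi)$ with $\big(\tfrac{\varphi-\psi}{2}\big)^{-1}(\{0\})$, and a cone over the two-parallel-arrows diagram $\varphi,\psi\colon\mathscr M\to\mathscr N$ is precisely a morphism $u\colon E\to\mathscr M$ with $\varphi\circ u=\psi\circ u$, equivalently with $\tfrac{\varphi-\psi}{2}\circ u=0_{E\mathscr N}$. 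Hence the universal property of $\mathrm{Eq}(\varphi,\psi)$ coincides verbatim with that of $\mathrm{Ker}\big(\tfrac{\varphi-\psi}{2}\big)$, which yields \eqref{eq:equaliser_BanMod}. No genuine obstacle is expected here — the argument is a routine verification of universal properties; the only mild care needed is to confirm that each constructed map ($\Phi$, the corestriction of $u$, and $\tfrac{\varphi-\psi}{2}$) genuinely satisfies the $|\cdot|\le|\cdot|$ bound defining morphisms in ${\bf BanMod}_{\mathbb X}$, which follows from the submodule/restriction structure and the triangle inequality.
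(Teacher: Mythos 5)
Your proposal is correct and follows essentially the same route as the paper: verify that \(\varphi^{-1}(\{0\})\) with the inclusion map satisfies the universal property of the kernel (with the corestriction of \(u\) providing the unique factorisation), and then reduce the equaliser of \(\varphi,\psi\) to the kernel of \(\tfrac{\varphi-\psi}{2}\). Your explicit check that \(\tfrac{\varphi-\psi}{2}\) is a morphism in \({\bf BanMod}_{\mathbb X}\) (whereas \(\varphi-\psi\) itself need not be) is a detail the paper only records in a subsequent remark, and it is handled correctly.
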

\begin{proof}
We know that \(\varphi^{-1}(\{0\})\) is a Banach \(L^0(\mathbb X)\)-submodule of \(\mathscr M\) and
that the inclusion map \(\iota\colon\varphi^{-1}(\{0\})\to\mathscr M\) is a morphism of Banach
\(L^0(\mathbb X)\)-modules with \(\varphi\circ\iota=0\). Now fix a Banach \(L^0(\mathbb X)\)-module
\(\mathscr E\) and a morphism \(u\colon\mathscr E\to\mathscr M\) such that \(\varphi\circ u=0\).
Define \(\Phi\colon\mathscr E\to\varphi^{-1}(\{0\})\) as \(\Phi(w)\coloneqq u(w)\) for every \(w\in\mathscr E\).
Note that \(\Phi\) is the unique map from \(\mathscr E\) to \(\varphi^{-1}(\{0\})\) with
\(\iota\circ\Phi=u\). Since \(\Phi\) is a morphism, we conclude that the kernel of \(\varphi\) is \(\varphi^{-1}(\{0\})\)
together with the inclusion map. This proves the first part of the statement, whence \eqref{eq:equaliser_BanMod} immediately follows.
\end{proof}
\begin{theorem}[Cokernels in \({\bf BanMod}_{\mathbb X}\)]\label{thm:cokernel_BanMod}
Let \(\mathbb X\) be a \(\sigma\)-finite measure space and \(\varphi\colon\mathscr M\to\mathscr N\) a morphism between Banach
\(L^0(\mathbb X)\)-modules \(\mathscr M\), \(\mathscr N\). Then the cokernel of \(\varphi\) exists and is given by
\[
{\rm Coker}(\varphi)\cong\mathscr N/{\rm cl}_{\mathscr N}\big(\varphi(\mathscr M)\big),
\]
together with the canonical projection map on the quotient \({\rm coker}(\varphi)\colon\mathscr N\to\mathscr N/{\rm cl}_{\mathscr N}\big(\varphi(\mathscr M)\big)\).
In particular, the coequaliser of any two morphisms \(\varphi,\psi\colon\mathscr M\to\mathscr N\) exists and is given by
\begin{equation}\label{eq:coequaliser_BanMod}
\big({\rm Coeq}(\varphi,\psi),{\rm coeq}(\varphi,\psi)\big)\cong
\bigg({\rm Coker}\Big(\frac{\varphi-\psi}{2}\Big),{\rm coker}\Big(\frac{\varphi-\psi}{2}\Big)\bigg).
\end{equation}
\end{theorem}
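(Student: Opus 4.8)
The plan is to argue dually to the proof of Theorem~\ref{thm:kernel_BanMod}. Set \(\mathscr N'\coloneqq{\rm cl}_{\mathscr N}\big(\varphi(\mathscr M)\big)\). Since \(\varphi(\mathscr M)\) is an \(L^0(\mathbb X)\)-submodule of \(\mathscr N\) (by the \emph{Range} construction among the examples of Banach \(L^0(\mathbb X)\)-modules), its closure \(\mathscr N'\) is a \emph{closed} \(L^0(\mathbb X)\)-submodule of the Banach \(L^0(\mathbb X)\)-module \(\mathscr N\), hence a Banach \(L^0(\mathbb X)\)-submodule. Therefore, by the \emph{Quotient} construction, \(\mathscr N/\mathscr N'\) is a Banach \(L^0(\mathbb X)\)-module when equipped with the pointwise norm \(|v+\mathscr N'|=\bigwedge_{w\in\mathscr N'}|v+w|\), and the canonical projection \(\pi\colon\mathscr N\to\mathscr N/\mathscr N'\) is \(L^0(\mathbb X)\)-linear and satisfies \(|\pi(v)|\leq|v|\), so it is a morphism in \({\bf BanMod}_{\mathbb X}\). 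Passing to the \emph{closure} of \(\varphi(\mathscr M)\) is essential here: by Example~\ref{ex:BanMod_not_balanced} the range \(\varphi(\mathscr M)\) need not be closed, and quotienting by \(\varphi(\mathscr M)\) itself would in general fail to produce a Banach (or even a normed) \(L^0(\mathbb X)\)-module.

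Next I would check that \((\mathscr N/\mathscr N',\pi)\) is the cokernel of \(\varphi\). First, \(\pi\circ\varphi=0\), since for every \(v\in\mathscr M\) one has \(\varphi(v)\in\mathscr N'\), so \(\pi(\varphi(v))=\varphi(v)+\mathscr N'=0\). For the universal property, let \(\mathscr F\) be a Banach \(L^0(\mathbb X)\)-module and \(u\colon\mathscr N\to\mathscr F\) a morphism with \(u\circ\varphi=0\). Then \(u\) vanishes on \(\varphi(\mathscr M)\), and since \(u\) is \(1\)-Lipschitz (hence continuous) it also vanishes on \(\mathscr N'={\rm cl}_{\mathscr N}(\varphi(\mathscr M))\). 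Consequently the map \(\Phi\colon\mathscr N/\mathscr N'\to\mathscr F\), \(\Phi(v+\mathscr N')\coloneqq u(v)\), is well-defined and \(L^0(\mathbb X)\)-linear, and it is the unique map satisfying \(\Phi\circ\pi=u\) (because \(\pi\) is surjective). It remains to see that \(\Phi\) does not increase the pointwise norm: for every \(w\in\mathscr N'\) we have \(u(w)=0\), whence \(|u(v)|=|u(v+w)|\leq|v+w|\); taking the infimum over \(w\in\mathscr N'\) gives \(|\Phi(v+\mathscr N')|=|u(v)|\leq\bigwedge_{w\in\mathscr N'}|v+w|=|v+\mathscr N'|\). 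Hence \(\Phi\) is a morphism in \({\bf BanMod}_{\mathbb X}\), and the universal property of the cokernel is verified.

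Finally, \eqref{eq:coequaliser_BanMod} follows exactly as in Theorem~\ref{thm:kernel_BanMod}: the operator \(\frac{\varphi-\psi}{2}\) is a morphism in \({\bf BanMod}_{\mathbb X}\) — the factor \(\frac12\) is precisely what guarantees \(\big|\frac{\varphi(v)-\psi(v)}{2}\big|\leq\frac{|\varphi(v)|+|\psi(v)|}{2}\leq|v|\) — and for any morphism \(u\colon\mathscr N\to\mathscr F\) one has \(u\circ\varphi=u\circ\psi\) if and only if \(u\circ\frac{\varphi-\psi}{2}=0\); thus \({\rm Coeq}(\varphi,\psi)\) and \({\rm Coker}\big(\frac{\varphi-\psi}{2}\big)\) solve the same universal problem and are canonically isomorphic. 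The only genuinely non-formal step is the pointwise-norm estimate for \(\Phi\) displayed above, which rests on the continuity of \(u\) (to pass from the range to its closure) together with the description of the quotient pointwise norm as an infimum; all the remaining verifications are a routine dualisation of the kernel case.
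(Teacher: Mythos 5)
Your proposal is correct and follows essentially the same route as the paper's proof: form the quotient by \({\rm cl}_{\mathscr N}(\varphi(\mathscr M))\), factor any \(u\) with \(u\circ\varphi=0\) through it using the continuity of \(u\) to pass from the range to its closure, and verify \(|\Phi(v+\mathscr N')|\leq|v+\mathscr N'|\) via the infimum description of the quotient pointwise norm. The only difference is cosmetic (the paper routes the norm estimate through \(\bigwedge_{z\in u^{-1}(\{0\})}|w+z|\), and you spell out the coequaliser reduction that the paper leaves implicit).
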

\begin{proof}
We know that the quotient \(\mathscr Q\coloneqq\mathscr N/{\rm cl}_{\mathscr N}\big(\varphi(\mathscr M)\big)\) is a Banach \(L^0(\mathbb X)\)-module
and that the canonical projection \(\pi\colon\mathscr N\to\mathscr Q\) is a morphism of Banach \(L^0(\mathbb X)\)-modules. Notice that
\(\pi\circ\varphi=0\). Now fix a Banach \(L^0(\mathbb X)\)-module \(\mathscr F\) and a morphism \(u\colon\mathscr N\to\mathscr F\)
such that \(u\circ\varphi=0\). Define the mapping \(\Phi\colon\mathscr Q\to\mathscr F\) as \(\Phi([w])\coloneqq u(w)\)
for every \(w\in\mathscr N\), where \([w]\in\mathscr Q\) stands for the equivalence class of \(w\). Notice also that
\({\rm cl}_{\mathscr N}\big(\varphi(\mathscr M)\big)\subseteq u^{-1}(\{0\})\). This implies that if two elements \(v,\tilde v\in\mathscr M\)
satisfy \(v-\tilde v\in{\rm cl}_{\mathscr N}\big(\varphi(\mathscr M)\big)\), then \(u(v)-u(\tilde v)=u(v-\tilde v)=0\).
Hence, \(\Phi\) is well-defined. Moreover, if \(w\in\mathscr N\) and \(z\in u^{-1}(\{0\})\),
then \(|u(w)|=|u(w+z)|\leq|w+z|\), which gives
\[
\big|\Phi([w])\big|=|u(w)|\leq\bigwedge_{z\in u^{-1}(\{0\})}|w+z|\leq
\bigwedge_{z\in{\rm cl}_{\mathscr N}(\varphi(\mathscr M))}|w+z|=|[w]|\quad\text{ for every }[w]\in\mathscr Q.
\]
Since \(\Phi\) is \(L^0(\mathbb X)\)-linear by construction, we deduce that it is a morphism of Banach \(L^0(\mathbb X)\)-modules.
Observe that \(\Phi\colon\mathscr Q\to\mathscr F\) is the unique map satisfying \(\Phi\circ\pi=u\). Hence, the cokernel of
\(\varphi\) is given by \(\mathscr Q\) together with the canonical projection map on the quotient. This proves the first part of the statement,
whence \eqref{eq:coequaliser_BanMod} immediately follows.
\end{proof}
\begin{remark}{\rm
Observe that in Theorems \ref{thm:kernel_BanMod} and \ref{thm:cokernel_BanMod} we cannot write \({\rm Eq}(\varphi,\psi)={\rm Ker}(\varphi-\psi)\)
or \({\rm Coeq}(\varphi,\psi)={\rm Coker}(\varphi-\psi)\), since \(\varphi-\psi\) needs not be a morphism in \({\bf BanMod}_{\mathbb X}\).
This is due to the fact that, in general, \(\big|(\varphi-\psi)(v)\big|\leq 2|v|\) for every \(v\in\mathscr M\) is the best inequality one
can have. In particular, we have that \({\bf BanMod}_{\mathbb X}\) is not enriched over the category of Abelian groups.
\fr}\end{remark}
\subsubsection*{Products and coproducts in \({\bf BanMod}_{\mathbb X}\)}
To construct (co)products in the category of Banach \(L^0(\mathbb X)\)-modules,
we need to introduce the notion of \(\ell_p\)-sum of a family of Banach \(L^0(\mathbb X)\)-modules.
\begin{definition}[\(\ell_p\)-sum]
Let \(\mathbb X\) be a \(\sigma\)-finite measure space and \(\mathscr M_\star=\{\mathscr M_i\}_{i\in I}\) a family of Banach
\(L^0(\mathbb X)\)-modules. Let \(p\in[1,\infty]\) be fixed. Given any \(v_\star=(v_i)_{i\in I}\in\prod_{i\in I}^{\bf Set}\mathscr M_i\), we define
\[
|v_\star|_p\coloneqq\left\{\begin{array}{ll}
\bigvee\big\{\big(\sum_{i\in J}|v_i|^p\big)^{1/p}\;\big|\;J\in\mathcal P_F(I)\big\}\in L^0_{\rm ext}(\mathbb X)\\
\bigvee\big\{|v_i|\;\big|\;i\in I\big\}\in L^0_{\rm ext}(\mathbb X)
\end{array}\quad\begin{array}{ll}
\text{ if }p<\infty,\\
\text{ if }p=\infty.
\end{array}\right.
\]
Then we define the \emph{\(\ell_p\)-sum} of \(\mathscr M_\star\) as
\[
\ell_p(\mathscr M_\star)\coloneqq\bigg\{v_\star\in\prod_{i\in I}^{\bf Set}\mathscr M_i\;\bigg|\;|v_\star|_p\in L^0(\mathbb X)\bigg\}.
\]
In the case where \(I\) consists of finitely many elements, say that \(\mathscr M_\star=\{\mathscr M_1,\ldots,\mathscr M_n\}\), we denote
\[
\mathscr M_1\oplus_p\dots\oplus_p\mathscr M_n\coloneqq\ell_p(\mathscr M_\star).
\]
\end{definition}
Rather standard verifications show that \(\ell_p(\mathscr M_\star)\) has a Banach \(L^0(\mathbb X)\)-module structure:
\begin{proposition}\label{prop:suff_cond_complete}
Let \(\mathbb X\) be a \(\sigma\)-finite measure space and \(\mathscr M_\star=\{\mathscr M_i\}_{i\in I}\) a family of Banach
\(L^0(\mathbb X)\)-modules. Let \(p\in[1,\infty]\) be fixed. Then \(\big(\ell_p(\mathscr M_\star),|\cdot|_p\big)\) is a
Banach \(L^0(\mathbb X)\)-module with respect to the following operations:
\[\begin{split}
v_\star+w_\star\coloneqq(v_i+w_i)_{i\in I}&\quad\text{ for every }v_\star,w_\star\in\ell_p(\mathscr M_\star),\\
f\cdot v_\star\coloneqq(f\cdot v_i)_{i\in I}&\quad\text{ for every }f\in L^0(\mathbb X)\text{ and }v_\star\in\ell_p(\mathscr M_\star).
\end{split}\]
\end{proposition}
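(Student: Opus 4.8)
The plan is to first check that \(\ell_p(\mathscr M_\star)\) is an \(L^0(\mathbb X)\)-submodule of the componentwise product module \(\prod_{i\in I}^{\bf Set}\mathscr M_i\) on which \(|\cdot|_p\) is a genuine pointwise norm, and then to prove completeness, which is the only step requiring more than routine bookkeeping. For the first part, the key remark is that for each fixed \(J\in\mathcal P_F(I)\) the map \(v_\star\mapsto\big(\sum_{i\in J}|v_i|^p\big)^{1/p}\) (resp.\ \(v_\star\mapsto\bigvee_{i\in J}|v_i|\) when \(p=\infty\)) is subadditive and absolutely \(L^0(\mathbb X)\)-homogeneous: this is just Minkowski's inequality on \(\R^{\# J}\) applied \(\mm\)-a.e.\ pointwise, combined with the triangle inequality and homogeneity of each pointwise norm on \(\mathscr M_i\). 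I would then take the supremum over \(J\in\mathcal P_F(I)\) — which is legitimate in \(L^0_{\rm ext}(\mathbb X)\) by the Dedekind completeness recalled in Remark \ref{rmk:Dedekind_compl}, and reduces to a countable (increasing, by directedness of the \(J\)'s) supremum. Subadditivity passes to the supremum at once, giving \(|v_\star+w_\star|_p\leq|v_\star|_p+|w_\star|_p\); for homogeneity one uses that multiplication by \(|f|\in L^0(\mathbb X)^+\) commutes with suprema (argue separately on \(\{f=0\}\) and on \(\{f\neq 0\}\), where it is an order isomorphism), obtaining \(|f\cdot v_\star|_p=|f|\,|v_\star|_p\). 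These two facts show that \(\ell_p(\mathscr M_\star)\) is closed under the stated operations, hence is an \(L^0(\mathbb X)\)-submodule of \(\prod_{i\in I}^{\bf Set}\mathscr M_i\), and that \(|\cdot|_p\colon\ell_p(\mathscr M_\star)\to L^0(\mathbb X)\) is a pointwise seminorm; finally \(|v_\star|_p=0\) forces \(|v_i|\leq|v_\star|_p=0\) (take \(J=\{i\}\)), whence \(v_i=0\) since \(\mathscr M_i\) is normed, so \(|\cdot|_p\) is a pointwise norm.

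For completeness, I would take a \(\sfd_{\ell_p(\mathscr M_\star)}\)-Cauchy sequence \((v_\star^n)_n\). Since \(|v_i^n-v_i^m|\leq|v_\star^n-v_\star^m|_p\) for every \(i\), each coordinate sequence \((v_i^n)_n\) is Cauchy in the Banach \(L^0(\mathbb X)\)-module \(\mathscr M_i\), hence converges to some \(v_i\in\mathscr M_i\); set \(v_\star\coloneqq(v_i)_{i\in I}\). As a Cauchy sequence converges as soon as it has a convergent subsequence, I may pass to a subsequence (not relabelled) with \(\sum_k\sfd_{L^0(\mathbb X)}\big(|v_\star^{k+1}-v_\star^k|_p,0\big)<\infty\); by Tonelli's theorem this gives \(\sum_k|v_\star^{k+1}-v_\star^k|_p<\infty\) \(\mm\)-a.e., so that \(R_m\coloneqq\sum_{l\geq m}|v_\star^{l+1}-v_\star^l|_p\) defines elements of \(L^0(\mathbb X)\) decreasing \(\mm\)-a.e.\ to \(0\). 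Fixing \(J\in\mathcal P_F(I)\) and \(m<k\), iterating the pointwise Minkowski inequality along \(v_\star^k-v_\star^m=\sum_{l=m}^{k-1}(v_\star^{l+1}-v_\star^l)\) and using the definition of \(|\cdot|_p\), one gets
\[
\Big(\sum_{i\in J}|v_i^k-v_i^m|^p\Big)^{1/p}\leq\sum_{l=m}^{k-1}\Big(\sum_{i\in J}|v_i^{l+1}-v_i^l|^p\Big)^{1/p}\leq\sum_{l=m}^{k-1}|v_\star^{l+1}-v_\star^l|_p\leq R_m.
\]
Letting \(k\to\infty\) along a subsequence for which \(|v_i^k-v_i|\to 0\) \(\mm\)-a.e.\ simultaneously for the finitely many \(i\in J\), the left-hand side converges \(\mm\)-a.e.\ to \(\big(\sum_{i\in J}|v_i-v_i^m|^p\big)^{1/p}\), so this quantity is \(\leq R_m\) \(\mm\)-a.e.; taking the supremum over \(J\in\mathcal P_F(I)\) yields \(|v_\star-v_\star^m|_p\leq R_m\in L^0(\mathbb X)\). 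Hence \(v_\star-v_\star^m\in\ell_p(\mathscr M_\star)\), so \(v_\star\in\ell_p(\mathscr M_\star)\), and \(\sfd_{\ell_p(\mathscr M_\star)}(v_\star,v_\star^m)=\sfd_{L^0(\mathbb X)}(|v_\star-v_\star^m|_p,0)\leq\int R_m\wedge 1\,\d\tilde\mm\to 0\) by dominated convergence (\(R_m\wedge 1\leq 1\), \(\tilde\mm\) is finite, and \(R_m\wedge 1\to 0\) \(\mm\)-a.e.); thus \(v_\star^m\to v_\star\) in \(\ell_p(\mathscr M_\star)\) and therefore the whole original Cauchy sequence converges to \(v_\star\). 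The case \(p=\infty\) is handled verbatim, with \(\bigvee_{i\in J}|\cdot|\) in place of \(\big(\sum_{i\in J}|\cdot|^p\big)^{1/p}\) throughout.

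The main obstacle — the only genuinely non-formal point — is the interaction between the defining supremum over the (possibly uncountable) family \(\mathcal P_F(I)\) and the fact that \(\sfd_{L^0(\mathbb X)}\)-convergence is detected only along \(\mm\)-a.e.-convergent subsequences, so that one cannot pass to the limit inside the supremum directly. The resolution, as above, is to freeze a single finite \(J\) first, where only finitely many coordinates are in play and a common \(\mm\)-a.e.-convergent subsequence can be extracted, establish the \(J\)-independent bound \(\big(\sum_{i\in J}|v_i-v_i^m|^p\big)^{1/p}\leq R_m\) there, and only afterwards take the supremum over \(J\).
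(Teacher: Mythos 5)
Your proof is correct and follows essentially the same route as the paper's: establish subadditivity and \(L^0\)-homogeneity of \(|\cdot|_p\) by working on a fixed finite \(J\) and then taking suprema, and prove completeness by first extracting coordinatewise limits and then obtaining the key estimate \(\big(\sum_{i\in J}|v_i-v_i^m|^p\big)^{1/p}\leq R_m\) on each fixed finite \(J\) before passing to the supremum over \(J\). The only (cosmetic) difference is that you produce the dominating functions \(R_m\) via a rapidly convergent subsequence and a telescoping sum, whereas the paper obtains them directly as \(L^0(\mathbb X)\)-limits of the Cauchy sequences \((|v_\star^n-v_\star^m|_p)_m\); both devices are standard and interchangeable here.
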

\begin{proof}
First of all, we aim to prove that for any \(v_\star,w_\star\in\ell_p(\mathscr M_\star)\) and \(f\in L^0(\mathbb X)\) it holds that
\begin{equation}\label{eq:suff_cond_complete}
|v_\star+w_\star|_p\leq|v_\star|_p+|w_\star|_p,\qquad|f\cdot v_\star|_p=|f||v_\star|_p.
\end{equation}
We discuss only the case \(p<\infty\), as the case \(p=\infty\) is easier. Fix any \(J\in\mathcal P_F(I)\) and notice that
\[\begin{split}
\bigg(\sum_{i\in J}|v_i+w_i|^p\bigg)^{1/p}&\leq\bigg(\sum_{i\in J}|v_i|^p\bigg)^{1/p}+\bigg(\sum_{i\in J}|w_i|^p\bigg)^{1/p}\leq|v_\star|_p+|w_\star|_p,\\
\bigg(\sum_{i\in J}|f\cdot v_i|^p\bigg)^{1/p}&=|f|\bigg(\sum_{i\in J}|v_i|^p\bigg)^{1/p}\leq|f||v_\star|_p.
\end{split}\]
By the arbitrariness of \(J\in\mathcal P_F(I)\), we deduce that \(|v_\star+w_\star|_p\leq|v_\star|_p+|w_\star|_p\)
and \(|f\cdot v_\star|_p\leq|f||v_\star|_p\), whence it follows that
\(|f|\big||v_\star|_p-|\nchi_{\{f\neq 0\}}\cdot v_\star|_p\big|\leq|f||\nchi_{\{f=0\}}\cdot v_\star|_p\leq\nchi_{\{f=0\}}|f||v_\star|_p=0\).
Therefore, letting \(g\coloneqq\nchi_{\{f\neq 0\}}\frac{1}{f}\in L^0(\mathbb X)\), we can estimate
\[
|f||v_\star|_p=|f||\nchi_{\{f\neq 0\}}\cdot v_\star|_p=|f||(fg)\cdot v_\star|_p\leq|f||g||f\cdot v_\star|_p\leq|f\cdot v_\star|_p.
\]
All in all, \eqref{eq:suff_cond_complete} is proved. In particular, \(v_\star+w_\star\) and \(f\cdot v_\star\) belong to
\(\ell_p(\mathscr M_\star)\) for every \(v_\star,w_\star\in\ell_p(\mathscr M_\star)\) and \(f\in L^0(\mathbb X)\). It can
be readily checked that \(\big(\ell_p(\mathscr M_\star),|\cdot|_p\big)\) is a normed \(L^0(\mathbb X)\)-module. It remains to verify that
\(\sfd_{\ell_p(\mathscr M_\star)}\) is a complete distance. Let \((v^n_\star)_{n\in\N}\subseteq\ell_p(\mathscr M_\star)\) be a Cauchy sequence.
Given any \(i\in I\), we have \(|v_i^n-v_i^m|\leq|v_\star^n-v_\star^m|_p\) for every \(n,m\in\N\), whence it follows that
\((v_i^n)_{n\in\N}\subseteq\mathscr M_i\) is a Cauchy sequence. Let \(v_i\in\mathscr M_i\) denote its limit as \(n\to\infty\).
Moreover, \(\big||v_\star^n|_p-|v_\star^m|_p\big|\leq|v_\star^n-v_\star^m|_p\) for every \(n,m\in\N\), thus \((|v_\star^n|_p)_{n\in\N}\)
is a Cauchy sequence in \(L^0(\mathbb X)\), which converges to some limit function \(g\in L^0(\mathbb X)\). For each \(J\in\mathcal P_F(I)\)
we have that \(\big(\sum_{i\in J}|v_i^n|^p\big)^{1/p}\leq|v_\star^n|_p\) for every \(n\in\N\), so that by letting \(n\to\infty\) we deduce
that \(\big(\sum_{i\in J}|v_i|^p\big)^{1/p}\leq g\). Given that \(J\in\mathcal P_F(I)\) was arbitrary, we obtain that \(|v_\star|_p\leq g\),
thus in particular \(v_\star\in\ell_p(\mathscr M_\star)\). Finally, for any \(\varepsilon>0\) we can find \(\bar n\in\N\) such that
\(\sfd_{L^0(\mathbb X)}(|v_\star^n-v_\star^m|,0)\leq\varepsilon\) for every \(n,m\geq\bar n\). Since \((|v_\star^n-v_\star^m|)_{m\geq\bar n}\)
is a Cauchy sequence in \(L^0(\mathbb X)\), it converges to some function \(g_n\in L^0(\mathbb X)\) as \(m\to\infty\). Notice that
\(\sfd_{L^0(\mathbb X)}(g_n,0)\leq\varepsilon\). For each \(J\in\mathcal P_F(I)\) and \(n,m\geq\bar n\), we have
\(\big(\sum_{i\in J}|v_i^n-v_i^m|^p\big)^{1/p}\leq|v_\star^n-v_\star^m|_p\), thus by letting \(m\to\infty\) we deduce that
\(\big(\sum_{i\in J}|v_i^n-v_i|^p\big)^{1/p}\leq g_n\). This implies that \(|v_\star^n-v_\star|_p\leq g_n\), and accordingly
that \(\sfd_{\ell_p(\mathscr M_\star)}(v_\star^n,v_\star)\leq\varepsilon\), for every \(n\geq\bar n\). Therefore, \(v_\star^n\to v_\star\)
in \(\ell_p(\mathscr M_\star)\) as \(n\to\infty\), as desired.
\end{proof}

With the concept of \(\ell_p\)-sum at disposal, we can describe products and coproducts in \({\bf BanMod}_{\mathbb X}\):
\begin{theorem}[Products in \({\bf BanMod}_{\mathbb X}\)]\label{thm:prod_BanMod}
Let \(\mathbb X\) be a \(\sigma\)-finite measure space, \(\mathscr M_\star=\{\mathscr M_i\}_{i\in I}\) a set of
Banach \(L^0(\mathbb X)\)-modules. Then the product of \(\mathscr M_\star\) in \({\bf BanMod}_{\mathbb X}\) exists and is given by
\[
\prod_{i\in I}^{{\bf BanMod}_{\mathbb X}}\mathscr M_i\cong\ell_\infty(\mathscr M_\star),
\]
together with the morphisms \(\pi_i\colon\ell_\infty(\mathscr M_\star)\to\mathscr M_i\) defined as \(\pi_i(v_\star)\coloneqq v_i\)
for every \(v_\star\in\ell_\infty(\mathscr M_\star)\).
\end{theorem}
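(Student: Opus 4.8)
The plan is to verify directly that $\big(\ell_\infty(\mathscr M_\star),\{\pi_i\}_{i\in I}\big)$ satisfies the universal property defining the product. By Proposition \ref{prop:suff_cond_complete} applied with $p=\infty$, the $\ell_\infty$-sum $\ell_\infty(\mathscr M_\star)$ is a genuine Banach $L^0(\mathbb X)$-module, hence an object of ${\bf BanMod}_{\mathbb X}$, so it remains only to check that each $\pi_i$ is a morphism and that the universal property holds. That $\pi_i\colon\ell_\infty(\mathscr M_\star)\to\mathscr M_i$ is a morphism is immediate: it is $L^0(\mathbb X)$-linear because the module operations on $\ell_\infty(\mathscr M_\star)$ are defined componentwise, and $|\pi_i(v_\star)|=|v_i|\leq\bigvee_{j\in I}|v_j|=|v_\star|_\infty$ for every $v_\star\in\ell_\infty(\mathscr M_\star)$.

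For the universal property, fix a Banach $L^0(\mathbb X)$-module $\mathscr N$ together with morphisms $\varphi_i\colon\mathscr N\to\mathscr M_i$ for $i\in I$, and set $\Phi(w)\coloneqq(\varphi_i(w))_{i\in I}\in\prod_{i\in I}^{\bf Set}\mathscr M_i$ for every $w\in\mathscr N$. The key point is that the bound $|\varphi_i(w)|\leq|w|$ holds \emph{uniformly} in $i\in I$, so that
\[
|\Phi(w)|_\infty=\bigvee_{i\in I}|\varphi_i(w)|\leq|w|\in L^0(\mathbb X)\quad\text{ for every }w\in\mathscr N;
\]
this shows at once that $\Phi(w)$ actually belongs to $\ell_\infty(\mathscr M_\star)$ and that $|\Phi(w)|\leq|w|$. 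Since $\Phi$ is $L^0(\mathbb X)$-linear (the operations on $\ell_\infty(\mathscr M_\star)$ being componentwise) and satisfies $\pi_i\circ\Phi=\varphi_i$ by construction, it is a morphism in ${\bf BanMod}_{\mathbb X}$ with the required property. Uniqueness is clear: if $\Psi\colon\mathscr N\to\ell_\infty(\mathscr M_\star)$ is any morphism with $\pi_i\circ\Psi=\varphi_i$ for all $i\in I$, then $\big(\Psi(w)\big)_i=\pi_i\big(\Psi(w)\big)=\varphi_i(w)=\big(\Phi(w)\big)_i$ for every $w$ and every $i$, whence $\Psi=\Phi$.

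There is essentially no obstacle in this argument; the only point worth isolating is the uniformity of the estimate $|\varphi_i(w)|\leq|w|$, which ensures that the supremum $\bigvee_{i\in I}|\varphi_i(w)|$ --- well defined in $L^0_{\rm ext}(\mathbb X)$ by Dedekind completeness --- is dominated by $|w|$ and hence lies in $L^0(\mathbb X)$. This is exactly where the constraint $|\varphi|\leq 1$ built into the morphisms of ${\bf BanMod}_{\mathbb X}$ enters, and it explains why the $\ell_\infty$-sum (and not some other $\ell_p$-sum) is the correct construction for the product.
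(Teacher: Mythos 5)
Your proof is correct and follows essentially the same route as the paper: verify that each \(\pi_i\) is a morphism, define \(\Phi(w)=(\varphi_i(w))_{i\in I}\), and use the uniform bound \(|\varphi_i(w)|\leq|w|\) to get \(|\Phi(w)|_\infty\leq|w|\), which shows both that \(\Phi\) lands in \(\ell_\infty(\mathscr M_\star)\) and that it is a morphism; uniqueness is forced componentwise. Your closing remark on why the \(\ell_\infty\)-sum (rather than another \(\ell_p\)-sum) is the right construction is a nice addition, but the argument itself matches the paper's.
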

\begin{proof}
First of all, observe that each mapping \(\pi_i\) is a morphism in \({\bf BanMod}_{\mathbb X}\). Now fix a Banach
\(L^0(\mathbb X)\)-module \(\mathscr N\) and a family \(\{\varphi_i\colon\mathscr N\to\mathscr M_i\}_{i\in I}\) of morphisms.
Let us define \(\Phi\colon\mathscr N\to\ell_\infty(\mathscr M_\star)\) as \(\Phi(w)\coloneqq\big(\varphi_i(w)\big)_{i\in I}\)
for every \(w\in\mathscr N\). Notice that \(\Phi\) is the unique mapping from \(\mathscr N\) to \(\ell_\infty(\mathscr M_\star)\)
satisfying \(\pi_i\circ\Phi=\varphi_i\) for every \(i\in I\). Clearly, \(\Phi\) is a morphism of \(L^0(\mathbb X)\)-modules.
Moreover, by passing to the supremum over \(i\in I\), we deduce from \(|\varphi_i(w)|\leq|w|\) that \(|\Phi(w)|_\infty\leq|w|\)
holds for every \(w\in\mathscr N\). All in all, \(\Phi\) is a morphism of Banach \(L^0(\mathbb X)\)-modules. The proof is complete.
\end{proof}
\begin{theorem}[Coproducts in \({\bf BanMod}_{\mathbb X}\)]\label{thm:coprod_BanMod}
Let \(\mathbb X\) be a \(\sigma\)-finite measure space, \(\mathscr M_\star=\{\mathscr M_i\}_{i\in I}\) a set
of Banach \(L^0(\mathbb X)\)-modules. Then the coproduct of \(\mathscr M_\star\) in \({\bf BanMod}_{\mathbb X}\) exists and is given by
\[
\coprod_{i\in I}^{{\bf BanMod}_{\mathbb X}}\mathscr M_i\cong\ell_1(\mathscr M_\star),
\]
together with the morphisms \(\iota_i\colon\mathscr M_i\to\ell_1(\mathscr M_\star)\) defined as
\(\iota_i(v)\coloneqq(w_j)_{j\in I}\) for every \(v\in\mathscr M_i\),
where we set \(w_i\coloneqq v\) and \(w_j\coloneqq 0_{\mathscr M_j}\) for every \(j\in I\setminus\{i\}\).
\end{theorem}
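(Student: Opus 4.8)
The plan is to verify directly that $\big(\ell_1(\mathscr M_\star),\{\iota_i\}_{i\in I}\big)$ satisfies the universal property of the coproduct, following the pattern of the proof of Theorem \ref{thm:prod_BanMod} but inserting an extra density step that compensates for the fact that the relevant sums are now infinite. First I note that each $\iota_i$ is a morphism in $\mathbf{BanMod}_{\mathbb X}$: it actually preserves the pointwise norm, since $|\iota_i(v)|_1=|v|$ because all coordinates of $\iota_i(v)$ except the $i$-th one vanish. More generally, for $J\in\mathcal P_F(I)$ and $v_\star\in\ell_1(\mathscr M_\star)$ supported in $J$ one has $|v_\star|_1=\sum_{i\in J}|v_i|$. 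Denote by $\mathscr D$ the $L^0(\mathbb X)$-submodule of $\ell_1(\mathscr M_\star)$ consisting of those $v_\star$ that are supported in some $J\in\mathcal P_F(I)$.

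The first substantial step is to prove that $\mathscr D$ is dense in $\ell_1(\mathscr M_\star)$. The key observation is that every $v_\star\in\ell_1(\mathscr M_\star)$ has at most countable support: applying the property recalled in Remark \ref{rmk:Dedekind_compl} to the family $\big\{\sum_{i\in J}|v_i|\big\}_{J\in\mathcal P_F(I)}$, whose supremum is $|v_\star|_1\in L^0(\mathbb X)$, we find countably many $J_n\in\mathcal P_F(I)$ with $|v_\star|_1=\bigvee_n\sum_{i\in J_n}|v_i|$; then, for any $i\notin C\coloneqq\bigcup_n J_n$ and any $n$, the inequality $|v_i|+\sum_{i'\in J_n}|v_{i'}|=\sum_{i'\in J_n\cup\{i\}}|v_{i'}|\leq|v_\star|_1$ gives $|v_i|\leq|v_\star|_1-\sum_{i'\in J_n}|v_{i'}|$ (all quantities being finite a.e.), and taking the infimum over $n$ yields $|v_i|\leq|v_\star|_1-\bigvee_n\sum_{i'\in J_n}|v_{i'}|=0$, so $v_i=0$. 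Writing $C=\{i_1,i_2,\dots\}$, the partial sums $\sum_{n=1}^N|v_{i_n}|$ increase and are bounded above by $|v_\star|_1$, with supremum exactly $|v_\star|_1$ (every $\sum_{i\in J}|v_i|$ being dominated by one of them), so they converge to $|v_\star|_1$ in $L^0(\mathbb X)$. Letting $v_\star^N$ be the truncation of $v_\star$ to $\{i_1,\dots,i_N\}$, one checks that $|v_\star-v_\star^N|_1$ equals the $L^0$-limit of $\sum_{n=N+1}^{N+M}|v_{i_n}|$ as $M\to\infty$, i.e.\ $|v_\star|_1-\sum_{n=1}^N|v_{i_n}|$, which tends to $0$ in $L^0(\mathbb X)$; hence $v_\star^N\to v_\star$ in $\ell_1(\mathscr M_\star)$.

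Next, fixing a Banach $L^0(\mathbb X)$-module $\mathscr N$ and a family of morphisms $\varphi_i\colon\mathscr M_i\to\mathscr N$, I would define $\Phi_0\colon\mathscr D\to\mathscr N$ by $\Phi_0(v_\star)\coloneqq\sum_{i\in J}\varphi_i(v_i)$ whenever $v_\star$ is supported in $J\in\mathcal P_F(I)$; this is a well-defined $L^0(\mathbb X)$-linear map, and the pointwise triangle inequality together with $|\varphi_i(v_i)|\leq|v_i|$ gives $|\Phi_0(v_\star)|\leq\sum_{i\in J}|v_i|=|v_\star|_1$. Thus $\Phi_0$ is $1$-Lipschitz from $(\mathscr D,\sfd_{\mathscr D})$ to $(\mathscr N,\sfd_{\mathscr N})$, so by completeness of $\mathscr N$ and density of $\mathscr D$ it extends uniquely to a continuous map $\Phi\colon\ell_1(\mathscr M_\star)\to\mathscr N$; passing to the limit along $\mathscr D$ preserves $L^0(\mathbb X)$-linearity and the inequality $|\Phi(v_\star)|\leq|v_\star|_1$ (here one uses the continuity of $|\cdot|$ and of the module operations, cf.\ Remark \ref{rmk:about_conv_L0}), so that $\Phi$ is a morphism in $\mathbf{BanMod}_{\mathbb X}$. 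By construction $\Phi\circ\iota_i=\varphi_i$ for every $i\in I$, since $\iota_i(v)\in\mathscr D$ and $\Phi_0(\iota_i(v))=\varphi_i(v)$.

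Finally, uniqueness: any morphism $\Psi\colon\ell_1(\mathscr M_\star)\to\mathscr N$ with $\Psi\circ\iota_i=\varphi_i$ for all $i$ must agree with $\Phi_0$ on $\mathscr D$ by $L^0(\mathbb X)$-linearity, hence with $\Phi$ on all of $\ell_1(\mathscr M_\star)$ by continuity and density, so $\Psi=\Phi$. I expect the only genuinely delicate point to be the density of $\mathscr D$ in Step~2 — specifically the reduction to countable supports via Remark \ref{rmk:Dedekind_compl} and the justification of the monotone convergence $\sum_{n=1}^N|v_{i_n}|\to|v_\star|_1$ in $L^0(\mathbb X)$ — while everything else runs parallel to Theorem \ref{thm:prod_BanMod} and to the classical construction of coproducts in $\mathbf{Ban}$. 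Alternatively, one could first observe that $\mathscr D$ (with the restricted norm) is the coproduct in the larger category of normed $L^0(\mathbb X)$-modules — the identity $|v_\star|_1=\sum_{i\in I}|v_i|$ making the combined map automatically $1$-Lipschitz — and then invoke the fact that completion preserves this colimit.
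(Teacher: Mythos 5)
Your proof is correct, and it reaches the universal morphism by a route that is genuinely different in its technical execution from the paper's, though the end product is the same map \(\Phi(v_\star)=\sum_{i\in I}\varphi_i(v_i)\). The paper constructs \(\Phi\) directly by proving unconditional convergence of this series: it localises to a partition \((E_n)_{n\in\N}\) of \(\X\) with \(\mm(E_n)<+\infty\) and \(\nchi_{E_n}|v_\star|_1\leq n\), on each piece of which the series becomes absolutely convergent in \(L^1(\mathbb X)\), and then glues via Remark \ref{rmk:about_conv_L0}. You instead prove that the finitely supported submodule \(\mathscr D\) is dense in \(\ell_1(\mathscr M_\star)\) --- via the countable-support reduction afforded by Remark \ref{rmk:Dedekind_compl} and monotone convergence of the partial sums of the pointwise norms --- and then extend the evident \(1\)-Lipschitz map \(\Phi_0\) from \(\mathscr D\) by uniform continuity and completeness of \(\mathscr N\). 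Your density argument is sound as written (the step \(\bigwedge_n\big(|v_\star|_1-\sum_{i'\in J_n}|v_{i'}|\big)=|v_\star|_1-\bigvee_n\sum_{i'\in J_n}|v_{i'}|\) is justified by computing pointwise a.e.\ along the countable family \(\{J_n\}_n\), which is exactly what the remark permits), and it has the merit of making explicit a fact that the paper's uniqueness step uses without proof, namely that \(\sum_{i\in I}\iota_i(v_i)\) converges to \(v_\star\) in \(\ell_1(\mathscr M_\star)\) --- this is precisely your density statement. What the paper's localisation buys in exchange is that it never needs to discuss countability of supports, handling the mere a.e.-finiteness of \(|v_\star|_1\) in one stroke; what your route buys is a cleaner separation of the analytic content (density of \(\mathscr D\)) from the purely formal verification of the universal property, in closer parallel with the classical construction of coproducts in \({\bf Ban}\).
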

\begin{proof}
First of all, observe that each mapping \(\iota_i\) is a morphism in \({\bf BanMod}_{\mathbb X}\) (with \(|\iota_i(v)|_1=|v|\)
for every \(v\in\mathscr M_i\)). Now fix a Banach \(L^0(\mathbb X)\)-module \(\mathscr N\) and a family
\(\{\varphi_i\colon\mathscr M_i\to\mathscr N\}_{i\in I}\) of morphisms. We claim that, given any element \(v_\star\in\ell_1(\mathscr M_\star)\),
the series \(\sum_{i\in I}\varphi_i(v_i)\) is unconditionally convergent in \(\mathscr N\) to some \(\Phi(v_\star)\in\mathscr N\).
To prove it, pick some partition \((E_n)_{n\in\N}\subseteq\Sigma\) of \(\X\) that satisfies \(\mm(E_n)<+\infty\) and \(\nchi_{E_n}|v_\star|_1\leq n\)
for every \(n\in\N\). For each \(J\in\mathcal P_F(I)\) we have that
\[
\sum_{i\in J}\int_{E_n}|\varphi_i(v_i)|\,\d\mm\leq\sum_{i\in J}\int_{E_n}|v_i|\,\d\mm\leq\int_{E_n}|v_\star|_1\,\d\mm\leq n\,\mm(E_n).
\]
It follows that \(\sum_{i\in I}\big\|\nchi_{E_n}|\varphi_i(v_i)|\big\|_{L^1(\mathbb X)}\leq n\,\mm(E_n)<+\infty\) for every \(n\in\N\),
thus in particular the series \(\sum_{i\in I}\nchi_{E_n}\cdot\varphi_i(v_i)\) is unconditionally convergent in \(\mathscr N\).
Recalling Remark \ref{rmk:about_conv_L0}, we conclude that the series \(\sum_{i\in I}\varphi_i(v_i)\) converges unconditionally to some
element \(\Phi(v_\star)\in\mathscr N\), as we claimed.

One can readily check that the resulting mapping \(\Phi\colon\ell_1(\mathscr M_\star)\to\mathscr N\) is a morphism of Banach
\(L^0(\mathbb X)\)-modules and that it satisfies \(\Phi\circ\iota_i=\varphi_i\) for every \(i\in I\). It only remains to show that
\(\Phi\) is the unique mapping having these two properties. To this aim, fix a morphism \(\Psi\colon\ell_1(\mathscr M_\star)\to\mathscr N\)
such that \(\Psi\circ\iota_i=\varphi_i\) for every \(i\in I\). Given any \(v_\star\in\ell_1(\mathscr M_\star)\), we have that the
series \(\sum_{i\in I}\iota_i(v_i)\) converges unconditionally to \(v_\star\) in \(\ell_1(\mathscr M_\star)\). Hence, the linearity and the
continuity of \(\Psi\), \(\Phi\) yield
\[
\Psi(v_\star)=\Psi\bigg(\sum_{i\in I}\iota_i(v_i)\bigg)=\sum_{i\in I}\Psi\big(\iota_i(v_i)\big)=\sum_{i\in I}\varphi_i(v_i)
=\sum_{i\in I}\Phi\big(\iota_i(v_i)\big)=\Phi\bigg(\sum_{i\in I}\iota_i(v_i)\bigg)=\Phi(v_\star),
\]
which proves the uniqueness of \(\Phi\). Consequently, the statement is achieved.
\end{proof}
\subsubsection*{The bicompleteness of \({\bf BanMod}_{\mathbb X}\)}
It is now immediate to obtain the main result of this paper:
\begin{theorem}\label{thm:bicompl}
Let \(\mathbb X\) be a \(\sigma\)-finite measure space. Then the category \({\bf BanMod}_{\mathbb X}\) is bicomplete.
\end{theorem}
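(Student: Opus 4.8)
The plan is to derive the statement instantly from the general criterion established in Theorem \ref{thm:suff_(co)complete}: a category possessing all (small) products and all equalisers is complete, and dually a category possessing all (small) coproducts and all coequalisers is cocomplete. Hence it suffices to observe that each of these four classes of (co)limits has already been shown to exist in \({\bf BanMod}_{\mathbb X}\).

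First I would invoke Theorem \ref{thm:prod_BanMod}, which exhibits the product of any set \(\mathscr M_\star=\{\mathscr M_i\}_{i\in I}\) of Banach \(L^0(\mathbb X)\)-modules as the \(\ell_\infty\)-sum \(\ell_\infty(\mathscr M_\star)\) equipped with the coordinate projections, and dually Theorem \ref{thm:coprod_BanMod}, which exhibits the coproduct as the \(\ell_1\)-sum \(\ell_1(\mathscr M_\star)\) equipped with the canonical inclusions; that these \(\ell_p\)-sums are genuine objects of \({\bf BanMod}_{\mathbb X}\) is exactly the content of Proposition \ref{prop:suff_cond_complete}. Then I would recall that equalisers exist by Theorem \ref{thm:kernel_BanMod}, the equaliser of \(\varphi,\psi\colon\mathscr M\to\mathscr N\) being the kernel of \((\varphi-\psi)/2\), i.e.\ the closed submodule \(((\varphi-\psi)/2)^{-1}(\{0\})\) with its inclusion map, and coequalisers exist by Theorem \ref{thm:cokernel_BanMod}, the coequaliser being the cokernel of \((\varphi-\psi)/2\), i.e.\ the quotient \(\mathscr N/{\rm cl}_{\mathscr N}\big(((\varphi-\psi)/2)(\mathscr M)\big)\) with its canonical projection. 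Applying Theorem \ref{thm:suff_(co)complete} once for limits and once for colimits then gives that \({\bf BanMod}_{\mathbb X}\) is both complete and cocomplete, hence bicomplete.

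At this point there is essentially no obstacle left to overcome: all the substantive work has been done in the preceding theorems, the genuinely delicate arguments being the proof that the \(\ell_p\)-sum carries a \emph{complete} pointwise norm (Proposition \ref{prop:suff_cond_complete}) and the unconditional-convergence estimate, relying on Remark \ref{rmk:about_conv_L0}, that produces the universal morphism out of a coproduct (Theorem \ref{thm:coprod_BanMod}). The only formal point worth a moment's care is that in Theorem \ref{thm:suff_(co)complete} the auxiliary products are indexed by \({\rm Ob}_{\bf J}\) and \({\rm Hom}_{\bf J}\), which are sets precisely because the diagram category \({\bf J}\) is small; thus, even though \({\bf BanMod}_{\mathbb X}\) itself is not small, the criterion applies, and the proof is complete.
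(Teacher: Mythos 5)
Your proposal is correct and follows exactly the paper's own proof: the author likewise deduces bicompleteness by combining the criterion of Theorem \ref{thm:suff_(co)complete} with the existence of products, coproducts, equalisers, and coequalisers established in Theorems \ref{thm:prod_BanMod}, \ref{thm:coprod_BanMod}, \ref{thm:kernel_BanMod}, and \ref{thm:cokernel_BanMod}. Your additional remarks on where the substantive work lies and on the smallness of the index category are accurate but not needed beyond the citation of those results.
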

\begin{proof}
It follows from Theorems \ref{thm:kernel_BanMod}, \ref{thm:cokernel_BanMod}, \ref{thm:prod_BanMod}, \ref{thm:coprod_BanMod},
and \ref{thm:suff_(co)complete}.
\end{proof}
\subsubsection*{Description of other limits and colimits in \texorpdfstring{\({\bf BanMod}_{\mathbb X}\)}{BanModX}}
Theorem \ref{thm:bicompl} ensures that inverse/direct limits and pullbacks/pushouts always exist in \({\bf BanMod}_{\mathbb X}\).
However, we believe it is also useful to describe them explicitly. We shall only write the relevant statements, omitting their proofs.
\begin{proposition}[Pullbacks in \({\bf BanMod}_{\mathbb X}\)]\label{prop:pullback_BanMod}
Let \(\mathbb X\) be a \(\sigma\)-finite measure space. Let \(\mathscr M\), \(\mathscr N\), and \(\mathscr Q\) be Banach \(L^0(\mathbb X)\)-modules.
Let \(\varphi\colon\mathscr M\to\mathscr Q\) and \(\psi\colon\mathscr N\to\mathscr Q\) be morphisms in
\({\bf BanMod}_{\mathbb X}\). Then the pullback of \(\varphi\) and \(\psi\) in \({\bf BanMod}_{\mathbb X}\) is given by
\[
\mathscr M\times_{\mathscr Q}\mathscr N\cong\big\{(v,w)\in\mathscr M\oplus_\infty\mathscr N\;\big|\;\varphi(v)=\psi(w)\big\}
\]
together with \(p_{\mathscr M}\coloneqq\pi_{\mathscr M}|_{\mathscr M\times_{\mathscr Q}\mathscr N}\colon\mathscr M\times_{\mathscr Q}\mathscr N\to\mathscr M\)
and \(p_{\mathscr N}\coloneqq\pi_{\mathscr N}|_{\mathscr M\times_{\mathscr Q}\mathscr N}\colon\mathscr M\times_{\mathscr Q}\mathscr N\to\mathscr N\), where
\((\mathscr M\oplus_\infty\mathscr N,\pi_{\mathscr M},\pi_{\mathscr N})\) stands for the product of \(\{\mathscr M,\mathscr N\}\) in the category \({\bf BanMod}_{\mathbb X}\).
\end{proposition}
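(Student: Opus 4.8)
The plan is to follow the template of the earlier proofs in this section: exhibit the candidate object explicitly and then verify the universal property directly. A shorter, purely formal route is also available, since it is standard that in any category with binary products and equalisers the pullback of \(\varphi\colon\mathscr M\to\mathscr Q\) and \(\psi\colon\mathscr N\to\mathscr Q\) coincides with the equaliser of the two morphisms \(\varphi\circ\pi_{\mathscr M},\psi\circ\pi_{\mathscr N}\colon\mathscr M\oplus_\infty\mathscr N\to\mathscr Q\); these are indeed morphisms in \({\bf BanMod}_{\mathbb X}\), because \(|\varphi(\pi_{\mathscr M}(v,w))|=|\varphi(v)|\leq|v|\leq|(v,w)|_\infty\) and similarly for \(\psi\circ\pi_{\mathscr N}\). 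By \eqref{eq:equaliser_BanMod} and Theorem~\ref{thm:kernel_BanMod} this equaliser is \({\rm Ker}\big(\tfrac12(\varphi\circ\pi_{\mathscr M}-\psi\circ\pi_{\mathscr N})\big)=\{(v,w)\in\mathscr M\oplus_\infty\mathscr N:\varphi(v)=\psi(w)\}\), carrying the inclusion into \(\mathscr M\oplus_\infty\mathscr N\), and composing that inclusion with \(\pi_{\mathscr M}\) and \(\pi_{\mathscr N}\) gives precisely \(p_{\mathscr M}\) and \(p_{\mathscr N}\). For the paper, though, I would spell out the direct verification below.

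First I would show that \(P\coloneqq\{(v,w)\in\mathscr M\oplus_\infty\mathscr N:\varphi(v)=\psi(w)\}\) is a Banach \(L^0(\mathbb X)\)-module. By the \(L^0(\mathbb X)\)-linearity of \(\varphi\) and \(\psi\) it is an \(L^0(\mathbb X)\)-submodule of \(\mathscr M\oplus_\infty\mathscr N\); it is closed because, if \((v_n,w_n)\to(v,w)\) in \(\mathscr M\oplus_\infty\mathscr N\), then \(|v_n-v|\vee|w_n-w|=|(v_n-v,w_n-w)|_\infty\to0\), so \(v_n\to v\) in \(\mathscr M\) and \(w_n\to w\) in \(\mathscr N\), whence the continuity of \(\varphi\) and \(\psi\) yields \(\varphi(v)=\psi(w)\). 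A closed submodule of a Banach \(L^0(\mathbb X)\)-module being itself Banach, \(P\) is a Banach \(L^0(\mathbb X)\)-module. The projections \(p_{\mathscr M},p_{\mathscr N}\) are \(L^0(\mathbb X)\)-linear and satisfy \(|p_{\mathscr M}(v,w)|=|v|\leq|v|\vee|w|=|(v,w)|_\infty\) (and likewise for \(p_{\mathscr N}\)), so they are morphisms in \({\bf BanMod}_{\mathbb X}\), and \(\varphi\circ p_{\mathscr M}=\psi\circ p_{\mathscr N}\) holds by the definition of \(P\). Then I would check the universal property: given a Banach \(L^0(\mathbb X)\)-module \(\mathscr R\) and morphisms \(q_{\mathscr M}\colon\mathscr R\to\mathscr M\), \(q_{\mathscr N}\colon\mathscr R\to\mathscr N\) with \(\varphi\circ q_{\mathscr M}=\psi\circ q_{\mathscr N}\), set \(\Phi(z)\coloneqq\big(q_{\mathscr M}(z),q_{\mathscr N}(z)\big)\); the compatibility forces \(\Phi(z)\in P\), the estimate \(|\Phi(z)|_\infty=|q_{\mathscr M}(z)|\vee|q_{\mathscr N}(z)|\leq|z|\) makes \(\Phi\) a morphism, and \(p_{\mathscr M}\circ\Phi=q_{\mathscr M}\), \(p_{\mathscr N}\circ\Phi=q_{\mathscr N}\) hold by construction. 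Uniqueness is immediate, since any morphism \(\Psi\) with \(p_{\mathscr M}\circ\Psi=q_{\mathscr M}\) and \(p_{\mathscr N}\circ\Psi=q_{\mathscr N}\) must send \(z\) to \(\big(q_{\mathscr M}(z),q_{\mathscr N}(z)\big)\).

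I do not foresee a real obstacle. The only two points needing a touch of care are that the pointwise-norm inequalities for \(p_{\mathscr M},p_{\mathscr N}\) and for \(\Phi\) hold with constant \(1\) — which is exactly what the \(\ell_\infty\)-pointwise norm, being the pointwise supremum of the coordinate norms, guarantees — and the completeness of \(P\), which is obtained via closedness in the complete distance \(\sfd_{\mathscr M\oplus_\infty\mathscr N}\). The rest is a routine diagram chase entirely parallel to the proofs of Theorems~\ref{thm:kernel_BanMod}, \ref{thm:prod_BanMod}, and \ref{thm:coprod_BanMod}.
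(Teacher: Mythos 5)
Your proof is correct. The paper deliberately omits a proof of this proposition (it notes that existence already follows from Theorem \ref{thm:bicompl}, and the explicit description is obtained by specialising the product-plus-equaliser construction of Theorem \ref{thm:suff_(co)complete} together with Theorems \ref{thm:kernel_BanMod} and \ref{thm:prod_BanMod}), and your first, formal route is precisely that intended argument, while your direct verification of the universal property --- closedness of \(\{(v,w):\varphi(v)=\psi(w)\}\) in \(\mathscr M\oplus_\infty\mathscr N\), the norm bounds \(|p_{\mathscr M}(v,w)|\leq|(v,w)|_\infty\) and \(|\Phi(z)|_\infty\leq|z|\), and uniqueness of \(\Phi\) --- is also sound and matches the style of the proofs the paper does spell out.
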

\begin{proposition}[Pushouts in \({\bf BanMod}_{\mathbb X}\)]\label{prop:pushout_BanMod}
Let \(\mathbb X\) be a \(\sigma\)-finite measure space. Let \(\mathscr M\), \(\mathscr N\), and \(\mathscr Q\) be Banach \(L^0(\mathbb X)\)-modules.
Let \(\varphi\colon\mathscr Q\to\mathscr M\) and \(\psi\colon\mathscr Q\to\mathscr N\) be morphisms in
\({\bf BanMod}_{\mathbb X}\). Let us define the normed \(L^0(\mathbb X)\)-submodule \(\mathscr I\) of \(\mathscr M\oplus_1\mathscr N\) as
\[
\mathscr I\coloneqq\Big\{(v,w)\in\mathscr M\oplus_1\mathscr N\;\big|\;
\big(\varphi(z),\psi(z)\big)=(-v,w)\,\text{ for some }z\in\mathscr Q\Big\}.
\]
Then the pushout of \(\varphi\) and \(\psi\) in \({\bf BanMod}_{\mathbb X}\) is given by
\[
\mathscr M\sqcup_{\mathscr Q}\mathscr N\cong(\mathscr M\oplus_1\mathscr N)/{\rm cl}_{\mathscr M\oplus_1\mathscr N}(\mathscr I)
\]
together with the morphisms \(i_{\mathscr M}\coloneqq\pi\circ\iota_{\mathscr M}\colon\mathscr M\to\mathscr M\sqcup_{\mathscr Q}\mathscr N\)
and \(i_{\mathscr N}\coloneqq\pi\circ\iota_{\mathscr N}\colon\mathscr N\to\mathscr M\sqcup_{\mathscr Q}\mathscr N\), where
\((\mathscr M\oplus_1\mathscr N,\iota_{\mathscr M},\iota_{\mathscr N})\) stands for the coproduct of \(\{\mathscr M,\mathscr N\}\)
in the category \({\bf BanMod}_{\mathbb X}\), while by \(\pi\colon\mathscr M\oplus_1\mathscr N\to\mathscr M\sqcup_{\mathscr Q}\mathscr N\)
we mean the canonical projection map on the quotient.
\end{proposition}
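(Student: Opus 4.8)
The plan is to realise the asserted quotient as an honest cokernel and then to obtain the pushout property by stacking the universal properties of the coproduct (Theorem \ref{thm:coprod_BanMod}) and of the cokernel (Theorem \ref{thm:cokernel_BanMod}). First I would introduce the \(L^0(\mathbb X)\)-linear map
\[
\sigma\coloneqq\tfrac12\big(\iota_{\mathscr N}\circ\psi-\iota_{\mathscr M}\circ\varphi\big)\colon\mathscr Q\to\mathscr M\oplus_1\mathscr N,\qquad\sigma(z)=\tfrac12\big(-\varphi(z),\psi(z)\big).
\]
Since \(\big|\iota_{\mathscr N}(\psi(z))-\iota_{\mathscr M}(\varphi(z))\big|_1=|\varphi(z)|+|\psi(z)|\le 2|z|\), the factor \(\tfrac12\) makes \(\sigma\) a morphism in \({\bf BanMod}_{\mathbb X}\) (whereas \(\iota_{\mathscr N}\circ\psi-\iota_{\mathscr M}\circ\varphi\) itself need not be one). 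Because \(\mathscr I\) is an \(L^0(\mathbb X)\)-submodule of \(\mathscr M\oplus_1\mathscr N\), it is invariant under multiplication by the constant \(\tfrac12\), so the range of \(\sigma\) is exactly \(\mathscr I\). Hence Theorem \ref{thm:cokernel_BanMod} applies to \(\sigma\) and identifies \(\big(\mathscr M\sqcup_{\mathscr Q}\mathscr N,\pi\big)\) with \(\big({\rm Coker}(\sigma),{\rm coker}(\sigma)\big)\); in particular \(\mathscr M\sqcup_{\mathscr Q}\mathscr N\) is a Banach \(L^0(\mathbb X)\)-module, \(\pi\) is an epimorphism, and \(\pi\circ\sigma=0\).

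Next I would check that \(\big(\mathscr M\sqcup_{\mathscr Q}\mathscr N,i_{\mathscr M},i_{\mathscr N}\big)\) is a cocone. As composites of morphisms, \(i_{\mathscr M}=\pi\circ\iota_{\mathscr M}\) and \(i_{\mathscr N}=\pi\circ\iota_{\mathscr N}\) are morphisms in \({\bf BanMod}_{\mathbb X}\), and the identity \(\iota_{\mathscr M}\circ\varphi-\iota_{\mathscr N}\circ\psi=-2\,\sigma\) of \(L^0(\mathbb X)\)-linear maps, combined with \(\pi\circ\sigma=0\), yields \(i_{\mathscr M}\circ\varphi=i_{\mathscr N}\circ\psi\). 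For the universal property, fix a Banach \(L^0(\mathbb X)\)-module \(\mathscr R\) and morphisms \(j_{\mathscr M}\colon\mathscr M\to\mathscr R\), \(j_{\mathscr N}\colon\mathscr N\to\mathscr R\) with \(j_{\mathscr M}\circ\varphi=j_{\mathscr N}\circ\psi\). By the coproduct universal property there is a unique morphism \(\Theta\colon\mathscr M\oplus_1\mathscr N\to\mathscr R\) with \(\Theta\circ\iota_{\mathscr M}=j_{\mathscr M}\) and \(\Theta\circ\iota_{\mathscr N}=j_{\mathscr N}\); then \(\Theta\circ\sigma=\tfrac12\big(j_{\mathscr N}\circ\psi-j_{\mathscr M}\circ\varphi\big)=0\), so the cokernel universal property furnishes a unique morphism \(\Phi\colon\mathscr M\sqcup_{\mathscr Q}\mathscr N\to\mathscr R\) with \(\Phi\circ\pi=\Theta\). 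It satisfies \(\Phi\circ i_{\mathscr M}=\Theta\circ\iota_{\mathscr M}=j_{\mathscr M}\) and, likewise, \(\Phi\circ i_{\mathscr N}=j_{\mathscr N}\). Uniqueness of \(\Phi\) follows by cancelling the epimorphism \(\pi\): any morphism \(\Phi'\) with \(\Phi'\circ i_{\mathscr M}=j_{\mathscr M}\) and \(\Phi'\circ i_{\mathscr N}=j_{\mathscr N}\) makes \(\Phi'\circ\pi\) satisfy the two identities characterising \(\Theta\), so \(\Phi'\circ\pi=\Theta=\Phi\circ\pi\) and thus \(\Phi'=\Phi\).

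I do not expect any genuinely hard step; the one point needing a little care is the introduction of the factor \(\tfrac12\), which converts the ``difference'' map into a legitimate morphism of \({\bf BanMod}_{\mathbb X}\) — and hence lets the quotient be recognised as a cokernel — at no cost, since \(\mathscr I\) is an \(L^0(\mathbb X)\)-submodule and therefore unchanged under scaling by \(\tfrac12\). One could instead bypass the cokernel entirely and prove directly that the \(L^0(\mathbb X)\)-linear map induced by \(\Theta\) on the quotient is a morphism, using the pointwise-norm infimum formula exactly as in the proof of Theorem \ref{thm:cokernel_BanMod}; the cokernel route merely repackages that computation.
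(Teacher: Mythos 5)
Your proof is correct. The paper explicitly omits the proof of this proposition, but your route --- realising the pushout as the coequaliser of \(\iota_{\mathscr M}\circ\varphi\) and \(\iota_{\mathscr N}\circ\psi\), which by \eqref{eq:coequaliser_BanMod} is the cokernel of the halved difference \(\sigma\) (whose range is exactly \(\mathscr I\) since \(\mathscr I\) is a submodule) --- is precisely the intended combination of Theorems \ref{thm:cokernel_BanMod} and \ref{thm:coprod_BanMod}, and every step, including the factor \(\tfrac12\) needed to make \(\sigma\) a morphism and the cancellation of the epimorphism \(\pi\) in the uniqueness argument, checks out.
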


Furthermore, by combining Proposition \ref{prop:pushout_BanMod} with Theorem \ref{thm:kernel_BanMod} and, respectively,
Proposition \ref{prop:pullback_BanMod} with Theorem \ref{thm:cokernel_BanMod}, one obtains the following two results:
\begin{corollary}[Images in \({\bf BanMod}_{\mathbb X}\)]
Let \(\mathbb X\) be a \(\sigma\)-finite measure space and \(\varphi\colon\mathscr M\to\mathscr N\) a morphism in \({\bf BanMod}_{\mathbb X}\).
Then there exists a unique morphism \(\psi\colon\mathscr M/{\rm Ker}(\varphi)\to\mathscr N\) in \({\bf BanMod}_{\mathbb X}\) such that
\(\psi\big(v+{\rm Ker}(\varphi)\big)=\varphi(v)\) holds for every \(v\in\mathscr M\). Moreover, it holds that
\[
\big({\rm Im}(\varphi),{\rm im}(\varphi)\big)\cong\big(\mathscr M/{\rm Ker}(\varphi),\psi\big).
\]
\end{corollary}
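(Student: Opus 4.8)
The plan is to prove the two assertions in turn, the first being a first-isomorphism-theorem statement and the second an explicit computation of \({\rm Im}(\varphi)\) from its definition. For the morphism \(\psi\): by Theorem \ref{thm:kernel_BanMod} we may take \({\rm Ker}(\varphi)=\varphi^{-1}(\{0\})\), so \(v-\tilde v\in{\rm Ker}(\varphi)\) forces \(\varphi(v)=\varphi(\tilde v)\) and \(\psi(v+{\rm Ker}(\varphi))\coloneqq\varphi(v)\) is a well-defined \(L^0(\mathbb X)\)-linear map. It is a morphism in \({\bf BanMod}_{\mathbb X}\): from \(|\varphi(v)|=|\varphi(v+w)|\le|v+w|\) for every \(w\in{\rm Ker}(\varphi)\) together with the defining formula \(|v+{\rm Ker}(\varphi)|=\bigwedge_{w\in{\rm Ker}(\varphi)}|v+w|\) of the quotient pointwise norm, one obtains \(|\psi(v+{\rm Ker}(\varphi))|\le|v+{\rm Ker}(\varphi)|\). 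Since \(\varphi=\psi\circ\pi\) with \(\pi\colon\mathscr M\to\mathscr M/{\rm Ker}(\varphi)\) the (surjective, hence epic) quotient projection, \(\psi\) is the unique such morphism.

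For the image, unwind the definition: \(\big({\rm Im}(\varphi),{\rm im}(\varphi)\big)=\big({\rm Eq}(i_1,i_2),{\rm eq}(i_1,i_2)\big)\), where \((\mathscr N\sqcup_{\mathscr M}\mathscr N,i_1,i_2)\) is the pushout of \(\varphi\) with itself. Proposition \ref{prop:pushout_BanMod}, applied with common source \(\mathscr M\) and both legs equal to \(\varphi\), presents this pushout as \((\mathscr N\oplus_1\mathscr N)/{\rm cl}(\mathscr I)\) with \(\mathscr I=\{(-\varphi(z),\varphi(z)):z\in\mathscr M\}\), and \(i_1=q\circ\iota_1\), \(i_2=q\circ\iota_2\), where \(\iota_1(n)=(n,0)\), \(\iota_2(n)=(0,n)\) and \(q\) is the quotient projection. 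The elementary point is that \(\mathscr N\ni u\mapsto(-u,u)\) satisfies \(|(-u,u)|_1=2|u|\), so it is a homeomorphism onto its image, which is a closed \(L^0(\mathbb X)\)-submodule of \(\mathscr N\oplus_1\mathscr N\); hence \({\rm cl}(\mathscr I)=\{(-u,u):u\in{\rm cl}_{\mathscr N}(\varphi(\mathscr M))\}\). Feeding this into Theorem \ref{thm:kernel_BanMod}, \({\rm Eq}(i_1,i_2)={\rm Ker}\big(\tfrac{i_1-i_2}{2}\big)=\big\{n\in\mathscr N:(n,-n)\in{\rm cl}(\mathscr I)\big\}={\rm cl}_{\mathscr N}(\varphi(\mathscr M))\), with \({\rm im}(\varphi)\) the inclusion into \(\mathscr N\).

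It then remains to match the pair \(\big({\rm Im}(\varphi),{\rm im}(\varphi)\big)=\big({\rm cl}_{\mathscr N}(\varphi(\mathscr M)),\iota\big)\) (\(\iota\) the inclusion) with \(\big(\mathscr M/{\rm Ker}(\varphi),\psi\big)\). The comparison is the morphism \(\Theta\colon\mathscr M/{\rm Ker}(\varphi)\to{\rm Im}(\varphi)\), \(\Theta(v+{\rm Ker}(\varphi))\coloneqq\varphi(v)\), which is the unique morphism with \({\rm im}(\varphi)\circ\Theta=\psi\); it is a monomorphism with range \(\varphi(\mathscr M)\) dense in \({\rm Im}(\varphi)\). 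I expect the verification that \(\Theta\) is an isomorphism of Banach \(L^0(\mathbb X)\)-modules to be the main obstacle: one must produce a morphism inverse, i.e.\ show that \(\Theta\) is onto \({\rm cl}_{\mathscr N}(\varphi(\mathscr M))\) and that \(\Theta^{-1}\) does not increase the pointwise norm. This cannot follow from diagram-chasing alone — in \({\bf Ban}\) the analogous canonical map from the coimage to the image is a bimorphism that need not be invertible, compare Example \ref{ex:BanMod_not_balanced} — so at this step I would look for the structural feature of the quotient construction that forces invertibility, and would in any case double-check whether the displayed identity is meant literally or is rather recording the canonical factorisation \(\varphi={\rm im}(\varphi)\circ\Theta\circ\pi\) with \(\mathscr M/{\rm Ker}(\varphi)\cong{\rm Coim}(\varphi)\) and \({\rm Im}(\varphi)\cong{\rm cl}_{\mathscr N}(\varphi(\mathscr M))\).
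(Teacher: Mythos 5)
Everything you actually prove is correct, and the obstruction you flag at the end is real: it is the statement, not your argument, that is at fault. The paper offers no proof of this corollary beyond the remark that it follows by combining Proposition \ref{prop:pushout_BanMod} with Theorem \ref{thm:kernel_BanMod}, and your second paragraph is precisely that derivation carried out faithfully: the pushout of \(\varphi\) with itself is \((\mathscr N\oplus_1\mathscr N)/{\rm cl}(\mathscr I)\) with \(\mathscr I=\{(-\varphi(z),\varphi(z)):z\in\mathscr M\}\); since \(\{(-u,u):u\in\mathscr N\}\) is a closed submodule on which \(|\cdot|_1\) equals twice the pointwise norm of \(\mathscr N\), one gets \({\rm cl}(\mathscr I)=\{(-u,u):u\in{\rm cl}_{\mathscr N}(\varphi(\mathscr M))\}\), and the equaliser of \(i_1,i_2\) is therefore \({\rm cl}_{\mathscr N}(\varphi(\mathscr M))\) together with the inclusion \(\iota\) into \(\mathscr N\). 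Your first paragraph (well-posedness of \(\psi\), the bound against the quotient pointwise norm, uniqueness) is likewise complete.

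Where you should be more decisive is the last step. The displayed isomorphism \(({\rm Im}(\varphi),{\rm im}(\varphi))\cong(\mathscr M/{\rm Ker}(\varphi),\psi)\) is false in general: an isomorphism compatible with the maps into \(\mathscr N\) would force the comparison \(\Theta(v+{\rm Ker}(\varphi))\coloneqq\varphi(v)\) to be onto \({\rm cl}_{\mathscr N}(\varphi(\mathscr M))\), i.e.\ would force \(\varphi\) to have closed range. The morphism of Example \ref{ex:BanMod_not_balanced} refutes this (there \({\rm Ker}(\varphi)=\{0\}\) and \(\psi=\varphi\), so the claim would make \(\varphi\) an isomorphism onto its closed range \(\mathscr N\)); over \(\mathbb P\) the inclusion \(\ell^1\hookrightarrow c_0\) even gives \({\rm Im}(\varphi)\cong c_0\not\cong\ell^1\cong\mathscr M/{\rm Ker}(\varphi)\) as bare objects. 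The resolution is that the conclusions of this corollary and of the following one on coimages have been interchanged: the equaliser of the cokernel pair --- the image --- is \(({\rm cl}_{\mathscr N}(\varphi(\mathscr M)),\iota)\), exactly what you computed, while the coequaliser of the kernel pair --- the coimage --- is \((\mathscr M/{\rm Ker}(\varphi),\pi)\) with \(\pi\) the canonical projection, as one sees by combining Proposition \ref{prop:pullback_BanMod} with Theorem \ref{thm:cokernel_BanMod}: the range of \((p_1-p_2)/2\) on \(\{(v,w)\in\mathscr M\oplus_\infty\mathscr M:\varphi(v)=\varphi(w)\}\) is exactly \({\rm Ker}(\varphi)\), which is already closed. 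Your \(\psi\) is then the composite \({\rm im}(\varphi)\circ\Theta\) in the canonical factorisation \(\varphi={\rm im}(\varphi)\circ\Theta\circ{\rm coim}(\varphi)\), and the failure of \(\Theta\) to be invertible is exactly the failure of \({\bf BanMod}_{\mathbb X}\) to be balanced. So prove the corrected statements; do not attempt to prove the identity as printed.
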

\begin{corollary}[Coimages in \({\bf BanMod}_{\mathbb X}\)]
Let \(\mathbb X\) be a \(\sigma\)-finite measure space and \(\varphi\colon\mathscr M\to\mathscr N\) a morphism in \({\bf BanMod}_{\mathbb X}\).
Then it holds that
\[
\big({\rm Coim}(\varphi),{\rm coim}(\varphi)\big)\cong\big({\rm cl}_{\mathscr N}(\varphi(\mathscr M)),\varphi\big).
\]
\end{corollary}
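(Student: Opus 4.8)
The plan is to follow the route indicated in the paper, realising \({\rm Coim}(\varphi)\) as the coequaliser of the kernel pair of \(\varphi\) and then identifying it with the closed range. First I would form the pullback of \(\varphi\) with itself via Proposition \ref{prop:pullback_BanMod}, obtaining the Banach \(L^0(\mathbb X)\)-module \(\mathscr K=\big\{(u,v)\in\mathscr M\oplus_\infty\mathscr M\;\big|\;\varphi(u)=\varphi(v)\big\}\) together with the two projections \(p_1,p_2\colon\mathscr K\to\mathscr M\). By definition \({\rm Coim}(\varphi)={\rm Coeq}(p_1,p_2)\), so the coequaliser formula \eqref{eq:coequaliser_BanMod} together with Theorem \ref{thm:cokernel_BanMod} lets me write \({\rm Coim}(\varphi)\cong{\rm Coker}\big(\tfrac{p_1-p_2}{2}\big)\cong\mathscr M/{\rm cl}_{\mathscr M}(R)\), where \(R\) is the range of the contraction \(\tfrac{p_1-p_2}{2}\colon\mathscr K\to\mathscr M\), \((u,v)\mapsto\tfrac{u-v}{2}\) (the factor \(\tfrac12\) is exactly what makes this a morphism, since \(|\tfrac{u-v}{2}|\leq|u|\vee|v|=|(u,v)|_\infty\)), with the canonical projection as structural map.

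Second, I would compute \(R\) explicitly. As \(\varphi(u)=\varphi(v)\) is equivalent to \(u-v\in\varphi^{-1}(\{0\})\), and \(\varphi^{-1}(\{0\})\) is an \(L^0(\mathbb X)\)-submodule, one checks directly that \(R=\varphi^{-1}(\{0\})={\rm Ker}(\varphi)\): the inclusion \(R\subseteq{\rm Ker}(\varphi)\) follows from \(\varphi(\tfrac{u-v}{2})=0\), and conversely any \(w\in{\rm Ker}(\varphi)\) arises from \((w,0)\in\mathscr K\). Since kernels are closed by Theorem \ref{thm:kernel_BanMod}, the closure is superfluous and I obtain \({\rm Coim}(\varphi)\cong\mathscr M/{\rm Ker}(\varphi)\) with projection \(\pi\).

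Third, to match the asserted description \(\big({\rm cl}_{\mathscr N}(\varphi(\mathscr M)),\varphi\big)\), I would introduce the \(L^0(\mathbb X)\)-linear map \(\bar\varphi\colon\mathscr M/{\rm Ker}(\varphi)\to\mathscr N\), \(\bar\varphi([v])\coloneqq\varphi(v)\). It is well defined and a morphism because \(|\varphi(v)|=|\varphi(v+z)|\leq|v+z|\) for every \(z\in{\rm Ker}(\varphi)\) yields \(|\bar\varphi([v])|\leq\bigwedge_{z\in{\rm Ker}(\varphi)}|v+z|=|[v]|\), its range equals \(\varphi(\mathscr M)\) (dense in \({\rm cl}_{\mathscr N}(\varphi(\mathscr M))\)), and it is injective since \(\bar\varphi([v])=0\) forces \(v\in{\rm Ker}(\varphi)\). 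I would then aim to upgrade \(\bar\varphi\) to an isomorphism of Banach \(L^0(\mathbb X)\)-modules onto \({\rm cl}_{\mathscr N}(\varphi(\mathscr M))\) compatible with the structural morphisms, which would give the claimed identification \(\big({\rm Coim}(\varphi),{\rm coim}(\varphi)\big)\cong\big({\rm cl}_{\mathscr N}(\varphi(\mathscr M)),\varphi\big)\); equivalently, I would verify the coequaliser universal property directly for the pair \(\big({\rm cl}_{\mathscr N}(\varphi(\mathscr M)),\varphi\big)\), factoring every test morphism \(g\colon\mathscr M\to F\) with \(g\circ p_1=g\circ p_2\) through the closed range.

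The hard part will be precisely this last identification: passing from the abstract quotient \(\mathscr M/{\rm Ker}(\varphi)\), whose comparison map \(\bar\varphi\) has dense but a priori non-closed range, to the closed submodule \({\rm cl}_{\mathscr N}(\varphi(\mathscr M))\). I expect the crux to be the surjectivity of \(\bar\varphi\) onto the closure, i.e.\ showing that \(\bar\varphi\) is bounded below in the pointwise norm so that its inverse extends continuously across the density gap; in the universal-property formulation the same obstruction appears as the need to bound a factored map by \(|\varphi(v)|\) rather than merely by \(|v|\). This is exactly the delicate mechanism underlying the failure of \({\bf BanMod}_{\mathbb X}\) to be balanced (Example \ref{ex:BanMod_not_balanced}), and it is where the completeness of the pointwise norm together with the gluing property of Remark \ref{rmk:about_conv_L0} would have to be brought to bear.
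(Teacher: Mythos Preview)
Your computation up through \({\rm Coim}(\varphi)\cong\mathscr M/{\rm Ker}(\varphi)\) (with the quotient projection as structural map) is correct and is precisely what Proposition~\ref{prop:pullback_BanMod} combined with Theorem~\ref{thm:cokernel_BanMod} produce. (A minor slip: \((w,0)\in\mathscr K\) is sent to \(w/2\), not \(w\), by \(\tfrac{p_1-p_2}{2}\); but since \({\rm Ker}(\varphi)\) is a submodule this does not affect the conclusion \(R={\rm Ker}(\varphi)\).) The difficulty you isolate in the final step, however, is not merely hard---it is insurmountable. The comparison map \(\bar\varphi\colon\mathscr M/{\rm Ker}(\varphi)\to{\rm cl}_{\mathscr N}(\varphi(\mathscr M))\) is in general \emph{not} an isomorphism in \({\bf BanMod}_{\mathbb X}\), and no appeal to completeness or gluing can repair this. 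The morphism of Example~\ref{ex:BanMod_not_balanced} already witnesses the failure: there \({\rm Ker}(\varphi)=\{0\}\) and \({\rm cl}_{\mathscr N}(\varphi(\mathscr M))=L^0(\mathbb X;c_0)\), so \(\bar\varphi=\varphi\), which is explicitly shown not to be an isomorphism. In the universal-property formulation, take \(g={\rm id}_{\mathscr M}\): since \(\varphi\) is injective the kernel pair is the diagonal, so \(g\circ p_1=g\circ p_2\) trivially, yet a factorisation \(h\circ\varphi={\rm id}\) with \(|h|\leq 1\) would force \(|v|\leq|\varphi(v)|\) for all \(v\), contradicting \(|\varphi(\nchi_\X e_n)|=\tfrac{1}{n}\nchi_\X\).

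What your argument actually establishes is \(\big({\rm Coim}(\varphi),{\rm coim}(\varphi)\big)\cong\big(\mathscr M/{\rm Ker}(\varphi),\pi\big)\), while the parallel computation on the image side (pushout of \(\varphi\) with itself via Proposition~\ref{prop:pushout_BanMod}, then the equaliser of the two insertions via Theorem~\ref{thm:kernel_BanMod}) gives \(\big({\rm Im}(\varphi),{\rm im}(\varphi)\big)\cong\big({\rm cl}_{\mathscr N}(\varphi(\mathscr M)),\iota\big)\) with \(\iota\) the inclusion. In other words, the descriptions in the two corollaries are interchanged; your instinct that the obstruction is exactly the mechanism behind the failure of balancedness was correct, and it is telling you that the statement as written cannot hold.
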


Finally, we provide an explicit description of inverse and direct limits in \({\bf BanMod}_{\mathbb X}\). Recall that inverse
and direct limits always exist in the category \(R\text{-}{\bf Mod}\) of modules over a commutative ring \(R\), see e.g.\ \cite{lang84}.
We shall denote by \({\sf f}_{\mathbb X}\colon{\bf BanMod}_{\mathbb X}\to L^0(\mathbb X)\text{-}{\bf Mod}\) the forgetful functor.
\begin{proposition}[Inverse limits in \({\bf BanMod}_{\mathbb X}\)]\label{prop:inverse_lim_BanModX}
Let \((I,\leq)\) be a directed set. Let \(\mathbb X\) be a \(\sigma\)-finite measure space. Let \(\big(\{\mathscr M_i\}_{i\in I},\{{\rm P}_{ij}\}_{i\leq j}\big)\)
be an inverse system in the category \({\bf BanMod}_{\mathbb X}\). We denote by \((M,\{\tilde{\rm P}_i\}_{i\in I})\) the inverse limit of
\(\big(\{{\sf f}_{\mathbb X}(\mathscr M_i)\}_{i\in I},\{{\sf f}_{\mathbb X}({\rm P}_{ij})\}_{i\leq j}\big)\) in \(L^0(\mathbb X)\text{-}{\bf Mod}\).
Let us define the mapping \(|\cdot|\colon M\to L^0_{\rm ext}(\mathbb X)\) as
\[
|v|\coloneqq\bigvee_{i\in I}\big|\tilde{\rm P}_i(v)\big|\quad\text{ for every }v\in M.
\]
Then \(\tilde M\coloneqq\big\{v\in M\,:\,|v|\in L^0(\mathbb X)\big\}\) is an \(L^0(\mathbb X)\)-submodule of \(M\)
and \(|\cdot|\) is a complete pointwise norm on \(\tilde M\). Moreover, the inverse limit of
\(\big(\{\mathscr M_i\}_{i\in I},\{{\rm P}_{ij}\}_{i\leq j}\big)\) in \({\bf BanMod}_{\mathbb X}\) is given by
\[
\varprojlim\mathscr M_\star\cong\tilde M
\]
together with the morphisms \({\rm P}_i\coloneqq\tilde{\rm P}_i|_{\tilde M}\colon\tilde M\to\mathscr M_i\).
\end{proposition}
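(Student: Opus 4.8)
The plan is to reduce to the standard concrete model of the inverse limit in $L^0(\mathbb X)\text{-}{\bf Mod}$: up to an $L^0(\mathbb X)$-module isomorphism we may take
\[
M=\Big\{(v_i)_{i\in I}\in\prod_{i\in I}\mathscr M_i\;\Big|\;{\rm P}_{ij}(v_j)=v_i\text{ whenever }i\le j\Big\},\qquad\tilde{\rm P}_i\big((v_j)_{j\in I}\big)\coloneqq v_i,
\]
where $\prod_{i\in I}\mathscr M_i$ carries the componentwise $L^0(\mathbb X)$-module structure; then $|v|=\bigvee_{i\in I}|v_i|$, a supremum that exists in $L^0_{\rm ext}(\mathbb X)$ by the Dedekind completeness recalled in Remark \ref{rmk:Dedekind_compl}. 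Two facts are used throughout: every morphism in ${\bf BanMod}_{\mathbb X}$ is $1$-Lipschitz (from $|\varphi(v)-\varphi(w)|=|\varphi(v-w)|\le|v-w|$), hence continuous; and for each $i$ the pointwise norm $|\cdot|\colon\mathscr M_i\to L^0(\mathbb X)$ is $1$-Lipschitz (from $\big||a|-|b|\big|\le|a-b|$ and the monotonicity of $\sfd_{L^0(\mathbb X)}(\cdot,0)$ on $L^0(\mathbb X)^+$), hence continuous.

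First I would check that $\tilde M$ is an $L^0(\mathbb X)$-submodule of $M$ on which $|\cdot|$ is a genuine pointwise norm. For $v,w\in M$ one has $|\tilde{\rm P}_i(v+w)|\le|\tilde{\rm P}_i v|+|\tilde{\rm P}_i w|\le|v|+|w|$ for every $i$, so $|v+w|\le|v|+|w|$ after taking the supremum; and $|\tilde{\rm P}_i(f\cdot v)|=|f|\,|\tilde{\rm P}_i v|$ for $f\in L^0(\mathbb X)$, so that $|\tilde{\rm P}_i(f\cdot v)|\le|f|\,|v|$ for each $i$ (whence $f\cdot v\in\tilde M$ when $v\in\tilde M$) and, using the compatibility of suprema with multiplication by nonnegative elements of $L^0(\mathbb X)$ (which reduces to a countable subfamily via Remark \ref{rmk:Dedekind_compl}), in fact $|f\cdot v|=|f|\,|v|$. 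Hence $\tilde M$ is closed under the module operations; moreover $|v|=0$ forces $\tilde{\rm P}_i v=0$ for all $i$, i.e.\ $v=0$, so $|\cdot|$ is a pointwise norm on $\tilde M$.

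The technical core is the completeness of $\sfd_{\tilde M}$, and the argument parallels the completeness part of the proof of Proposition \ref{prop:suff_cond_complete}. Let $(v^n)_n\subseteq\tilde M$ be Cauchy. From $|\tilde{\rm P}_i(v^n-v^m)|\le|v^n-v^m|$, the sequence $(\tilde{\rm P}_i v^n)_n$ is Cauchy in the Banach $L^0(\mathbb X)$-module $\mathscr M_i$, hence converges to some $w_i\in\mathscr M_i$; continuity of ${\rm P}_{ij}$ together with ${\rm P}_{ij}\circ\tilde{\rm P}_j=\tilde{\rm P}_i$ gives ${\rm P}_{ij}(w_j)=w_i$, so $v\coloneqq(w_i)_{i\in I}$ lies in $M$. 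From $\big||v^n|-|v^m|\big|\le|v^n-v^m|$, the sequence $(|v^n|)_n$ is Cauchy in $L^0(\mathbb X)$, say $|v^n|\to g\in L^0(\mathbb X)$; letting $n\to\infty$ in $|\tilde{\rm P}_i v^n|\le|v^n|$ (using continuity of $|\cdot|$ on $\mathscr M_i$ and stability of $\le$ under $L^0(\mathbb X)$-convergence) gives $|w_i|\le g$ for every $i$, whence $|v|\le g$ and $v\in\tilde M$. For the convergence $v^n\to v$: given $\eps>0$ pick $\bar n$ with $\sfd_{L^0(\mathbb X)}(|v^n-v^m|,0)\le\eps$ for all $n,m\ge\bar n$; for fixed $n\ge\bar n$ the sequence $(|v^n-v^m|)_{m\ge\bar n}$ is Cauchy in $L^0(\mathbb X)$ and converges to some $h_n$ with $\sfd_{L^0(\mathbb X)}(h_n,0)\le\eps$, and letting $m\to\infty$ in $|\tilde{\rm P}_i v^n-\tilde{\rm P}_i v^m|\le|v^n-v^m|$ yields $|\tilde{\rm P}_i(v^n-v)|\le h_n$ for every $i$; taking the supremum over $i$ gives $|v^n-v|\le h_n$, hence $\sfd_{\tilde M}(v^n,v)\le\sfd_{L^0(\mathbb X)}(h_n,0)\le\eps$ for all $n\ge\bar n$. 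Thus $\tilde M$ is a Banach $L^0(\mathbb X)$-module.

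It remains to verify the universal property. Each ${\rm P}_i=\tilde{\rm P}_i|_{\tilde M}$ is $L^0(\mathbb X)$-linear with $|{\rm P}_i(v)|=|\tilde{\rm P}_i v|\le|v|$, hence a morphism, and ${\rm P}_{ij}\circ{\rm P}_j={\rm P}_i$, so $(\tilde M,\{{\rm P}_i\})$ is a cone to the diagram of type $(I,\le)^{\rm op}$. Given any cone $(\mathscr N,\{{\rm Q}_i\})$ in ${\bf BanMod}_{\mathbb X}$, applying the forgetful functor ${\sf f}_{\mathbb X}$ and the universal property of $M$ in $L^0(\mathbb X)\text{-}{\bf Mod}$ produces a unique $L^0(\mathbb X)$-linear $\Phi\colon\mathscr N\to M$ with $\tilde{\rm P}_i\circ\Phi={\rm Q}_i$ for all $i$; from $|\tilde{\rm P}_i(\Phi(w))|=|{\rm Q}_i(w)|\le|w|$ one gets $|\Phi(w)|\le|w|$ after taking the supremum, so $\Phi$ maps into $\tilde M$ and is a morphism $\mathscr N\to\tilde M$ with ${\rm P}_i\circ\Phi={\rm Q}_i$. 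Uniqueness of such a morphism is inherited from the uniqueness already granted by the universal property of $M$, since any competitor, composed with the inclusion $\tilde M\hookrightarrow M$, is again an $L^0(\mathbb X)$-linear map into $M$ with the prescribed compositions with the $\tilde{\rm P}_i$. Hence $(\tilde M,\{{\rm P}_i\})$ is the inverse limit of $\big(\{\mathscr M_i\}_{i\in I},\{{\rm P}_{ij}\}_{i\le j}\big)$ in ${\bf BanMod}_{\mathbb X}$. The main obstacle is the completeness step: one must control the possibly uncountable supremum $\bigvee_{i\in I}|\tilde{\rm P}_i(\cdot)|$ along a Cauchy sequence, for which the Dedekind-completeness and countable-reduction properties of $L^0_{\rm ext}(\mathbb X)$ are essential; the remaining steps are routine and mirror the $\ell_p$-sum case.
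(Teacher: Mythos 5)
Your proof is correct. Note that the paper itself deliberately omits the proof of this proposition (it appears under the heading where the author states that only the statements are written, existence being already guaranteed by Theorem \ref{thm:bicompl}), so there is no argument to compare against line by line; but your route --- the concrete model of $\varprojlim$ in $L^0(\mathbb X)\text{-}{\bf Mod}$, the countable-reduction property of Remark \ref{rmk:Dedekind_compl} to handle the uncountable supremum in the homogeneity and completeness steps, a completeness argument that mirrors the one for $\ell_p$-sums in Proposition \ref{prop:suff_cond_complete}, and the universal property inherited through the forgetful functor --- is exactly the one the surrounding material is set up to support, and all the delicate points (passing $\le$ to $L^0$-limits, the $1$-Lipschitz continuity of the pointwise norms and of the ${\rm P}_{ij}$) are handled properly.
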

\begin{proposition}[Direct limits in \({\bf BanMod}_{\mathbb X}\)]\label{prop:direct_lim_BanModX}
Let \((I,\leq)\) be a directed set and \(\mathbb X\) a \(\sigma\)-finite measure space. Let \(\big(\{\mathscr M_i\}_{i\in I},\{\varphi_{ij}\}_{i\leq j}\big)\)
be a direct system in the category \({\bf BanMod}_{\mathbb X}\). Let us denote by \((M,\{\tilde\varphi_i\}_{i\in I})\) the direct limit of
\(\big(\{{\sf f}_{\mathbb X}(\mathscr M_i)\}_{i\in I},\{{\sf f}_{\mathbb X}(\varphi_{ij})\}_{i\leq j}\big)\) in \(L^0(\mathbb X)\text{-}{\bf Mod}\). Then
\[
|w|\coloneqq\bigwedge\big\{|v|\;\big|\;i\in I,\,v\in\mathscr M_i,\,\tilde\varphi_i(v)=w\big\}\quad\text{ for every }w\in M
\]
defines a pointwise seminorm \(|\cdot|\colon M\to L^0(\mathbb X)\). Moreover, the direct limit of \(\big(\{\mathscr M_i\}_{i\in I},\{\varphi_{ij}\}_{i\leq j}\big)\)
in \({\bf BanMod}_{\mathbb X}\) is given by
\[
\varinjlim\mathscr M_\star\cong\mathscr M\coloneqq\overline{M/\sim_{|\cdot|}}
\]
together with the morphisms \(\varphi_i\coloneqq\iota\circ\pi\circ\tilde\varphi_i\colon\mathscr M_i\to\mathscr M\),
where \(\pi\colon M\to M/\sim_{|\cdot|}\) is the canonical projection on the quotient, while \((\mathscr M,\iota)\)
is the completion of the normed \(L^0(\mathbb X)\)-module \(M/\sim_{|\cdot|}\).
\end{proposition}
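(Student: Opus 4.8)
The plan is to build the colimit from the cocomplete ``underlying'' category \(L^0(\mathbb X)\text{-}{\bf Mod}\), in the same spirit in which colimits in \({\bf Ban}\) are obtained from colimits of vector spaces. Since \(L^0(\mathbb X)\text{-}{\bf Mod}\) has all small colimits, the algebraic direct limit \((M,\{\tilde\varphi_i\}_{i\in I})\) of \(\big(\{{\sf f}_{\mathbb X}(\mathscr M_i)\}_{i\in I},\{{\sf f}_{\mathbb X}(\varphi_{ij})\}_{i\leq j}\big)\) is available; recall that every element of \(M\) is of the form \(\tilde\varphi_i(v)\) for some \(i\in I\) and \(v\in\mathscr M_i\), that \(\tilde\varphi_j\circ\varphi_{ij}=\tilde\varphi_i\) for \(i\leq j\), and that \(\bigcup_{i\in I}\tilde\varphi_i(\mathscr M_i)=M\). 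The whole argument then consists of equipping \(M\) with the pointwise seminorm \(|\cdot|\), passing to the metric identification \(M/\sim_{|\cdot|}\), taking its completion \(\mathscr M=\overline{M/\sim_{|\cdot|}}\), and checking that the universal property of \(\varinjlim\mathscr M_\star\) in \({\bf BanMod}_{\mathbb X}\) is forced by the universal property of \(M\) together with those of the metric identification and of the completion.

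First I would verify that \(|\cdot|\colon M\to L^0(\mathbb X)\) is a well-defined pointwise seminorm. It takes values in \(L^0(\mathbb X)\) (not merely in \(L^0_{\rm ext}(\mathbb X)\)) because any representative \(v\) of \(w\) gives \(0\leq|w|\leq|v|<\infty\) a.e., and \(|0_M|=0\) since \(0_{\mathscr M_i}\) represents \(0_M\). For subadditivity, given representatives \(v_1\in\mathscr M_{i_1}\) of \(w_1\) and \(v_2\in\mathscr M_{i_2}\) of \(w_2\), choose \(k\geq i_1,i_2\) by directedness: then \(\varphi_{i_1k}(v_1)+\varphi_{i_2k}(v_2)\) represents \(w_1+w_2\), and inside the genuine normed module \(\mathscr M_k\) its pointwise norm is \(\leq|v_1|+|v_2|\); taking the infimum over \(v_1\) and over \(v_2\) (infima distribute over sums in the Dedekind complete lattice \(L^0(\mathbb X)\)) yields \(|w_1+w_2|\leq|w_1|+|w_2|\). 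For homogeneity, \(|fw|\leq|f|\,|w|\) is immediate since \(fv\) represents \(fw\) whenever \(v\) represents \(w\); the reverse inequality I would get by a localisation argument in the spirit of the proof of Proposition \ref{prop:suff_cond_complete}: given a representative \(u\in\mathscr M_i\) of \(fw\) and a fixed representative \(v_0\in\mathscr M_j\) of \(w\), for \(k\geq i,j\) the element \(e\coloneqq\nchi_{\{f\neq0\}}\frac{1}{f}\varphi_{ik}(u)+\nchi_{\{f=0\}}\varphi_{jk}(v_0)\in\mathscr M_k\) represents \(w\) and satisfies \(|f|\,|e|\leq\nchi_{\{f\neq0\}}|u|\leq|u|\), whence \(|f|\,|w|\leq|f|\,|e|\leq|u|\), and taking the infimum over \(u\) gives \(|f|\,|w|\leq|fw|\).

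Next I would assemble the cocone and check its universal property. The metric identification \(M/\sim_{|\cdot|}\) is a normed \(L^0(\mathbb X)\)-module and \(\mathscr M\) a Banach \(L^0(\mathbb X)\)-module; since \(\tilde\varphi_i\) is \(L^0(\mathbb X)\)-linear with \(|\tilde\varphi_i(v)|\leq|v|\) by the definition of \(|\cdot|\), while the quotient map \(\pi\) and the embedding \(\iota\) do not increase the pointwise norm, each \(\varphi_i=\iota\circ\pi\circ\tilde\varphi_i\) is a morphism in \({\bf BanMod}_{\mathbb X}\), and \(\varphi_j\circ\varphi_{ij}=\varphi_i\) follows from \(\tilde\varphi_j\circ\varphi_{ij}=\tilde\varphi_i\). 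Now let \((\mathscr N,\{\psi_i\}_{i\in I})\) be any cocone in \({\bf BanMod}_{\mathbb X}\). The universal property of \(M\) yields a unique \(L^0(\mathbb X)\)-linear \(\Phi_0\colon M\to\mathscr N\) with \(\Phi_0\circ\tilde\varphi_i=\psi_i\). The crucial point is that \(\Phi_0\) descends to \(M/\sim_{|\cdot|}\): if \(|w|=0\), then by Remark \ref{rmk:Dedekind_compl} the infimum defining \(|w|\) is realised by a countable family of representatives \(v_n\in\mathscr M_{i_n}\), hence \(\Phi_0(w)=\psi_{i_n}(v_n)\) for every \(n\), so that \(|\Phi_0(w)|\leq\bigwedge_n|v_n|=0\), i.e.\ \(\Phi_0(w)=0\). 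Thus \(\Phi_0=\Phi_1\circ\pi\) for a unique \(L^0(\mathbb X)\)-linear \(\Phi_1\colon M/\sim_{|\cdot|}\to\mathscr N\), and the same estimate shows \(|\Phi_1(\pi(w))|\leq|w|\), so \(\Phi_1\) is norm-contracting. Since \(\mathscr N\) is complete and norm-contracting \(L^0(\mathbb X)\)-linear maps are uniformly continuous for the induced distances, \(\Phi_1\) extends (by density of \(\iota(M/\sim_{|\cdot|})\) in \(\mathscr M\)) to a morphism \(\Phi\colon\mathscr M\to\mathscr N\) with \(\Phi\circ\iota=\Phi_1\), whence \(\Phi\circ\varphi_i=\psi_i\) for every \(i\). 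Uniqueness of \(\Phi\) is immediate, because two morphisms agreeing with \(\psi_i\) after precomposition with \(\varphi_i\) for every \(i\) must agree on \(\bigcup_{i\in I}\varphi_i(\mathscr M_i)=\iota(M/\sim_{|\cdot|})\), which is dense in \(\mathscr M\).

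I expect the genuinely delicate steps to be the homogeneity identity \(|fw|=|f|\,|w|\), whose ``\(\geq\)'' direction needs the localisation/gluing trick rather than a formal computation, and the descent of \(\Phi_0\) to the quotient, which really relies on the fact that the infimum defining \(|\cdot|\) is attained by countably many representatives (Remark \ref{rmk:Dedekind_compl}); a minor but necessary check is that the extension of \(\Phi_1\) to the completion \(\mathscr M\) remains \(L^0(\mathbb X)\)-linear and norm-contracting, which follows from the continuity of the module operations and of the pointwise norm.
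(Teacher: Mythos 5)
Your argument is correct, and it follows exactly the construction that the statement itself prescribes (algebraic direct limit in \(L^0(\mathbb X)\text{-}{\bf Mod}\), the infimum seminorm, metric identification, completion, and descent of the induced map through quotient and completion); the paper explicitly omits the proofs of these explicit descriptions, so there is nothing to compare against, but your verification of the two delicate points --- the reverse inequality \(|f|\,|w|\le|fw|\) via the gluing \(\nchi_{\{f\neq0\}}\frac{1}{f}\varphi_{ik}(u)+\nchi_{\{f=0\}}\varphi_{jk}(v_0)\), and the bound \(|\Phi_0(w)|\le|w|\) --- is sound. (One small simplification: for the latter you do not need Remark \ref{rmk:Dedekind_compl} at all, since \(|\Phi_0(w)|=|\psi_i(v)|\le|v|\) for \emph{every} representative \(v\) already exhibits \(|\Phi_0(w)|\) as a lower bound of the defining family, hence \(|\Phi_0(w)|\le|w|\) by the definition of infimum.)
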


Next we provide an elementary example of a nontrivial inverse system of Banach \(L^0(\mathbb X)\)-modules having a trivial inverse limit.
This construction will be useful in Example \ref{ex:inverse_not_right_exact}.
\begin{example}\label{ex:inverse_trivial}{\rm
Let \(\mathbb X\) be a \(\sigma\)-finite measure space and \(\mathscr M\neq\{0\}\) a given Banach \(L^0(\mathbb X)\)-module. We define \(\mathscr M_n\coloneqq\mathscr M\)
for every \(n\in\N\). Given any \(n,m\in\N\) with \(n\leq m\), we define the morphism \({\rm P}_{nm}\colon\mathscr M_m\to\mathscr M_n\) as
\({\rm P}_{nm}\coloneqq\frac{n}{m}{\rm id}_{\mathscr M}\). Then \(\big(\{\mathscr M_n\}_{n\in\N},\{{\rm P}_{nm}\}_{n\leq m}\big)\) is an inverse system and
\begin{equation}\label{eq:inverse_trivial}
\textstyle\varprojlim\mathscr M_\star\cong\{0\}.
\end{equation}
Indeed, the inverse limit \((M,\{\tilde{\rm P}_n\}_{n\in\N})\) of \(\big(\{{\sf f}_{\mathbb X}(\mathscr M_n)\}_{n\in\N},\{{\sf f}_{\mathbb X}({\rm P}_{nm})\}_{n\leq m}\big)\)
in \(L^0(\mathbb X)\text{-}{\bf Mod}\) is given by \(M\coloneqq\big\{(kv)_{k\in\N}\in\prod_{k\in\N}^{\bf Set}\mathscr M_k\,:\,v\in\mathscr M\big\}\) together with the morphisms
\(\tilde{\rm P}_n\colon M\to\mathscr M_n\) defined as
\[
\tilde{\rm P}_n\big((kv)_{k\in\N}\big)\coloneqq nv\quad\text{ for every }(kv)_{k\in\N}\in M.
\]
Using Proposition \ref{prop:inverse_lim_BanModX}, we get
\(\big|(kv)_{k\in\N}\big|=\bigvee_{n\in\N}|nv|=(+\infty)\cdot\nchi_{\{|v|>0\}}\), which gives \eqref{eq:inverse_trivial}.
\fr}\end{example}

We know that the inverse limit functor preserves limits, whereas the direct limit functor preserves colimits.
On the contrary, the following two examples show that in \({\bf BanMod}_{\mathbb X}\) the inverse limit
functor is not right exact and that the direct limit functor is not left exact, respectively.
\begin{example}\label{ex:inverse_not_right_exact}{\rm
Fix any \(\sigma\)-finite measure space \(\mathbb X\) and any Banach \(L^0(\mathbb X)\)-module \(\mathscr M\neq\{0\}\). Given any \(n\in\N\), we define
\(\mathscr M_n=\mathscr N_n\coloneqq\mathscr M\). Given any \(n,m\in\N\) such that \(n\leq m\), we define \({\rm P}_{nm}\colon\mathscr M_m\to\mathscr M_n\)
and \({\rm Q}_{nm}\colon\mathscr N_m\to\mathscr N_n\) as \({\rm P}_{nm}\coloneqq\frac{n}{m}{\rm id}_{\mathscr M}\) and \({\rm Q}_{nm}\coloneqq{\rm id}_{\mathscr M}\), respectively.
Then both \(\big(\{\mathscr M_n\}_{n\in\N},\{{\rm P}_{nm}\}_{n\leq m}\big)\) and \(\big(\{\mathscr N_n\}_{n\in\N},\{{\rm Q}_{nm}\}_{n\leq m}\big)\) are inverse systems in
\({\bf BanMod}_{\mathbb X}\) and the collection \(\theta_\star\) of morphisms \(\theta_n\colon\mathscr M_n\to\mathscr N_n\) given by \(\theta_n\coloneqq\frac{1}{n}{\rm id}_{\mathscr M}\)
is a natural transformation, i.e.\ a morphism in \(({\bf BanMod}_{\mathbb X})^{(I,\leq)^{\rm op}}\). Observe that trivially \(\varprojlim_\N\mathscr N_\star\cong\mathscr M\), while
\(\varprojlim_\N\mathscr M_\star\cong\{0\}\) thanks to Example \ref{ex:inverse_trivial}. Recalling Remark \ref{rmk:kernel_of_natural_hom} and using the surjectivity of each \(\theta_n\),
we obtain that \({\rm Coker}(\theta_\star)(n)=\{0\}\) for all \(n\in\N\). Hence, \({\rm Coker}(\theta_\star)\) is the zero object of
\(({\bf BanMod}_{\mathbb X})^{(I,\leq)^{\rm op}}\) and thus \(\varprojlim_\N{\rm Coker}(\theta_\star)\cong\{0\}\). On the other hand, the cokernel of
the morphism \(\varprojlim_\N\theta_\star\) is
\[
\textstyle
{\rm Coker}\big(\varprojlim_\N\theta_\star\big)\cong\big(\varprojlim_\N\mathscr N_\star\big)/{\rm cl}_{(\varprojlim_\N\mathscr N_\star)}\Big(\big(\varprojlim_\N\theta_\star\big)\big(\varprojlim_\N\mathscr M_\star\big)\Big)
\cong\varprojlim_\N\mathscr N_\star\cong\mathscr M\neq\{0\},
\]
thus the inverse limit functor \(\varprojlim_\N\) on \(({\bf BanMod}_{\mathbb X})^{(I,\leq)^{\rm op}}\) does not preserve cokernels.
\fr}\end{example}
\begin{example}\label{ex:direct_not_left_exact}{\rm
Fix any \(\sigma\)-finite measure space \(\mathbb X\). Consider the morphism \(\theta\colon\mathscr H_{\mathbb X}(\N)\to\mathscr H_{\mathbb X}(\N)\) defined as follows:
given any \(v\in\mathscr H_{\mathbb X}(\N)\), we set \(\theta(v)(1)\coloneqq 0\) and \(\theta(v)(i)\coloneqq v(i)\) for every \(i\in\N\) with \(i\geq 2\).
We define the sequence \((v_n)_{n\in\N}\subseteq\mathscr H_{\mathbb X}(\N)\) as \(v_1(i)\coloneqq\frac{1}{i}\nchi_\X\) for every \(i\in\N\) and
\(v_n\coloneqq e_n\) for every \(n\geq 2\). Moreover, we denote by \(\mathscr M_n\) the closure in \(\mathscr H_{\mathbb X}(\N)\) of its \(L^0(\mathbb X)\)-submodule
generated by \(\{v_1,\ldots,v_n\}\), while we define \(\mathscr N_n\coloneqq\mathscr H_{\mathbb X}(\N)\). Finally, for any \(n\leq m\) we denote by
\(\varphi_{nm}\colon\mathscr M_n\hookrightarrow\mathscr M_m\) the inclusion map, by \(\psi_{nm}\colon\mathscr N_n\to\mathscr N_m\) the identity map,
and by \(\theta_n\colon\mathscr M_n\to\mathscr N_n\) the map \(\theta_n\coloneqq\theta|_{\mathscr M_n}\). It holds that \(\big(\{\mathscr M_n\}_{n\in\N},\{\varphi_{nm}\}_{n\leq m}\big)\)
and \(\big(\{\mathscr N_n\}_{n\in\N},\{\psi_{nm}\}_{n\leq m}\big)\) are direct systems in \({\bf BanMod}_{\mathbb X}\), while \(\theta_\star\) is a morphism
in \(({\bf BanMod}_{\mathbb X})^{(\N,\leq)}\) between them. Recalling Remark \ref{rmk:kernel_of_natural_hom}, we deduce that \({\rm Ker}(\theta_\star)\) is
the zero object, so that \(\varinjlim_\N{\rm Ker}(\theta_\star)=\{0\}\). On the other hand, since the \(L^0(\mathbb X)\)-submodule of \(\mathscr H_{\mathbb X}(\N)\)
generated by the sequence \((v_n)_{n\in\N}\) is dense in \(\mathscr H_{\mathbb X}(\N)\), one can easily check that \(\varinjlim_\N\mathscr M_\star\cong\mathscr H_{\mathbb X}(\N)\).
Clearly, \(\varinjlim_\N\mathscr N_\star\cong\mathscr H_{\mathbb X}(\N)\) as well and \(\varinjlim_\N\theta_\star\cong\theta\). Since \(\theta\) is not injective,
we conclude that \({\rm Ker}\big(\varinjlim_\N\theta_\star\big)\neq\{0\}\), thus showing that the direct limit functor \(\varinjlim_\N\) on \(({\bf BanMod}_{\mathbb X})^{(\N,\leq)}\) does not preserve kernels.
\fr}\end{example}
\subsection{The inverse image functor}\label{ss:inv_img}
Here, we introduce and study the `inverse image functor'.
\subsubsection*{The category \({\bf BanMod}\)}
Let us now consider the category \({\bf BanMod}\), which is defined as follows:
\begin{itemize}
\item The objects of \({\bf BanMod}\) are given by the couples \((\mathbb X,\mathscr M)\), where \(\mathbb X\) is a \(\sigma\)-finite measure space
and \(\mathscr M\) is a Banach \(L^0(\mathbb X)\)-module.
\item A morphism in \({\bf BanMod}\) between two objects \((\mathbb X,\mathscr M)\) and \((\mathbb Y,\mathscr N)\) is a couple \((\tau,\varphi)\),
where \(\tau\colon\mathbb X\to\mathbb Y\) is a morphism in \({\bf Meas}_\sigma\) and \(\varphi\colon\mathscr N\to\mathscr M\) is a linear map such that
\[\begin{split}
\varphi(f\cdot v)=(f\circ\tau)\cdot\varphi(v)&\quad\text{ for every }f\in L^0(\mathbb Y)\text{ and }v\in\mathscr N,\\
|\varphi(v)|\leq|v|\circ\tau&\quad\text{ for every }v\in\mathscr N.
\end{split}\]
\item Given morphisms \((\tau,\varphi)\colon(\mathbb X,\mathscr M)\to(\mathbb Y,\mathscr N)\) and \((\eta,\psi)\colon(\mathbb Y,\mathscr N)\to(\mathbb W,\mathscr Q)\)
in \({\bf BanMod}\), their composition is defined as \((\eta,\psi)\circ(\tau,\varphi)\coloneqq(\eta\circ\tau,\varphi\circ\psi)\colon(\mathbb X,\mathscr M)\to(\mathbb W,\mathscr Q)\).
\end{itemize}
It holds that \({\bf Meas}_\sigma\) and \({\bf Ban}^{\rm op}\) can be realised as full subcategories of \({\bf BanMod}\), while each \({\bf BanMod}_{\mathbb X}^{\rm op}\)
can be realised as a (not necessarily full) subcategory of \({\bf BanMod}\). More precisely:
\begin{itemize}
\item Define the functor \({\rm I}_M\colon{\bf Meas}_\sigma\to{\bf BanMod}\) as \({\rm I}_M(\mathbb X)\coloneqq(\mathbb X,\{0\})\) for every object \(\mathbb X\) of
\({\bf Meas}_\sigma\) and \({\rm I}_M(\tau)\coloneqq(\tau,0)\colon(\mathbb X,\{0\})\to(\mathbb Y,\{0\})\) for every morphism \(\tau\colon\mathbb X\to\mathbb Y\)
in \({\bf Meas}_\sigma\). Then \({\rm I}_M\) is a fully faithful functor that is injective on objects.
\item Let \(\mathbb X\) be a fixed \(\sigma\)-finite measure space. Define the functor \({\rm I}_{\mathbb X}\colon{\bf BanMod}_{\mathbb X}^{\rm op}\to{\bf BanMod}\)
as \({\rm I}_{\mathbb X}(\mathscr M)\coloneqq(\mathbb X,\mathscr M)\) for every object \(\mathscr M\) of \({\bf BanMod}_{\mathbb X}\) and
\({\rm I}_{\mathbb X}(\varphi)\coloneqq({\rm id}_{\mathbb X},\varphi)\) for every morphism \(\varphi\colon\mathscr M\to\mathscr N\) in \({\bf BanMod}_{\mathbb X}^{\rm op}\).
Then \({\rm I}_{\mathbb X}\) is faithful and injective on objects.
\item Recall that \({\bf BanMod}_{\mathbb P}={\bf Ban}\). Also,
\({\rm id}_{\mathbb P}\colon\mathbb P\to\mathbb P\) is the unique element of
\({\rm Hom}_{{\bf Meas}_\sigma}(\mathbb P,\mathbb P)\), thus \({\rm I}_{\mathbb P}\)
is a full functor and \({\bf Ban}^{\rm op}\) can be realised as a full subcategory of \({\bf BanMod}\).
\end{itemize}
We shall also consider the forgetful functor \(\Pi_M\colon{\bf BanMod}\to{\bf Meas}_\sigma\), which we define as 
\[\begin{split}
\Pi_M((\mathbb X,\mathscr M))\coloneqq\mathbb X&\quad\text{ for every object }(\mathbb X,\mathscr M)\text{ of }{\bf BanMod},\\
\Pi_M((\tau,\varphi))\coloneqq\tau&\quad\text{ for every morphism }(\tau,\varphi)\colon(\mathbb X,\mathscr M)\to(\mathbb Y,\mathscr N)\text{ in }{\bf BanMod}.
\end{split}\]
\subsubsection*{Inverse image versus pullback}
We are now in a position to introduce the inverse image functor:
\begin{theorem}[Inverse image functor]\label{thm:inv_im}
There exists a unique functor
\[
{\rm InvIm}\colon({\rm id}_{{\bf Meas}_\sigma}\downarrow\Pi_M)\to{\bf BanMod}^\to,
\]
which we call the \emph{inverse image functor}, such that the following properties are satisfied:
\begin{itemize}
\item If \((\mathbb X,(\mathbb Y,\mathscr M),\tau)\) is a given object of the comma category \(({\rm id}_{{\bf Meas}_\sigma}\downarrow\Pi_M)\),
then there exist a Banach \(L^0(\mathbb X)\)-module \(\tau^*\mathscr M\) and a linear operator \(\tau^*\colon\mathscr M\to\tau^*\mathscr M\) such that
\[
{\rm InvIm}\big((\mathbb X,(\mathbb Y,\mathscr M),\tau)\big)=(\tau,\tau^*).
\]
Moreover, the \(L^0(\mathbb X)\)-submodule of \(\tau^*\mathscr M\) generated by the range \(\tau^*(\mathscr M)\) is dense in \(\tau^*\mathscr M\)
and it holds that \(|\tau^*v|=|v|\circ\tau\) for every \(v\in\mathscr M\).
\item If \((\eta_1,(\eta_2,\varphi))\colon(\mathbb X_1,(\mathbb Y_1,\mathscr M_1),\tau_1)\to(\mathbb X_2,(\mathbb Y_2,\mathscr M_2),\tau_2)\) is a
morphism in \(({\rm id}_{{\bf Meas}_\sigma}\downarrow\Pi_M)\), then there exists an operator \(\psi\colon\tau_2^*\mathscr M_2\to\tau_1^*\mathscr M_1\)
such that
\[
{\rm InvIm}\big((\eta_1,(\eta_2,\varphi))\big)=\big((\eta_1,\psi),(\eta_2,\varphi)\big).
\]
\end{itemize}
The uniqueness of the functor \({\rm InvIm}\) is intended up to a unique natural isomorphism in the functor category from
\(({\rm id}_{{\bf Meas}_\sigma}\downarrow\Pi_M)\) to \({\bf BanMod}^\to\).
\end{theorem}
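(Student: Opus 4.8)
\emph{Reduction and the object part.} I would first build, for a single object $(\mathbb X,(\mathbb Y,\mathscr M),\tau)$, a Banach $L^0(\mathbb X)$-module $\tau^*\mathscr M$ with a linear map $\tau^*\colon\mathscr M\to\tau^*\mathscr M$, prove a universal property for it, and then obtain both the functoriality and the uniqueness by formal arguments. Since $\tau$ is a morphism in ${\bf Meas}_\sigma$, i.e.\ $\tau_\#\mm_{\mathbb X}\ll\mm_{\mathbb Y}$, the assignment $f\mapsto f\circ\tau$ is a well-defined unital ring homomorphism $L^0(\mathbb Y)\to L^0(\mathbb X)$. Let $\mathscr F$ be the free $L^0(\mathbb X)$-module on the underlying set of $\mathscr M$, write $[v]$ for the generator attached to $v\in\mathscr M$, and let $\mathscr R\subseteq\mathscr F$ be the $L^0(\mathbb X)$-submodule generated by all the elements $[v+w]-[v]-[w]$ and $[f\cdot v]-(f\circ\tau)\cdot[v]$ (for $v,w\in\mathscr M$ and $f\in L^0(\mathbb Y)$). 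On $\mathscr M_0\coloneqq\mathscr F/\mathscr R$, with $\tau^*_0\colon\mathscr M\to\mathscr M_0$ the linear map $v\mapsto[v]+\mathscr R$ (linear over $f\mapsto f\circ\tau$, with range generating $\mathscr M_0$), I would put
\[
|\xi|\coloneqq\bigwedge\bigg\{\sum_{i=1}^n|g_i|\,(|v_i|\circ\tau)\;\bigg|\;\xi=\sum_{i=1}^n g_i\cdot\tau^*_0 v_i,\ n\in\N,\ g_i\in L^0(\mathbb X),\ v_i\in\mathscr M\bigg\}.
\]
The infimum exists in $L^0(\mathbb X)$ and may be realised by a countable subfamily, by the Dedekind completeness recalled in Remark \ref{rmk:Dedekind_compl}; arguing as in the proof of Proposition \ref{prop:suff_cond_complete} — in particular using the multiplication by $\nchi_{\{g\neq 0\}}$ trick for the homogeneity identity — one checks that $|\cdot|$ is a pointwise seminorm. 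Then $\tau^*\mathscr M$ is the completion of $\mathscr M_0/\!\sim_{|\cdot|}$ and $\tau^*\coloneqq\iota\circ\pi\circ\tau^*_0$ (with $\pi,\iota$ the metric-identification and completion maps), and one sets ${\rm InvIm}\big((\mathbb X,(\mathbb Y,\mathscr M),\tau)\big)\coloneqq(\tau,\tau^*)$, a morphism of ${\bf BanMod}$ from $(\mathbb X,\tau^*\mathscr M)$ to $(\mathbb Y,\mathscr M)$. By construction $\tau^*$ is linear over $f\mapsto f\circ\tau$, its range generates a dense submodule, and $|\tau^* v|\leq|v|\circ\tau$ (use the representation $\xi=1\cdot\tau^*_0 v$). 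One could also invoke the pullback-of-modules construction of \cite{Gigli14}.

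\emph{The norm identity and the universal property.} The reverse inequality $|\tau^* v|\geq|v|\circ\tau$ is the one genuinely nontrivial point. For $\omega\in\mathscr M^*$ with $|\omega|\leq 1$, the $L^0(\mathbb X)$-linear functional on $\mathscr F$ sending $[v]$ to $\omega(v)\circ\tau$ annihilates $\mathscr R$ (by $L^0(\mathbb Y)$-linearity of $\omega$), so it descends to a morphism $\Lambda_\omega\colon\mathscr M_0\to L^0(\mathbb X)$ with $\Lambda_\omega(\tau^*_0 v)=\omega(v)\circ\tau$; evaluating $\Lambda_\omega$ on an arbitrary representation of $\xi$ and using $|\omega(v_i)|\leq|v_i|$ gives $|\Lambda_\omega(\xi)|\leq|\xi|$, whence $|\omega(v)|\circ\tau\leq|\tau^*_0 v|$. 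Taking the supremum over such $\omega$ — which may be taken over a countable subfamily, hence commutes with precomposition by $\tau$ — and using that $\mathscr M^*$ is norming (see \cite{Gigli14}) gives $|v|\circ\tau\leq|\tau^*_0 v|$, so that $|\tau^* v|=|v|\circ\tau$. The same bookkeeping yields the \emph{universal property}: if $\mathscr N$ is a Banach $L^0(\mathbb X)$-module and $T\colon\mathscr M\to\mathscr N$ is linear over $f\mapsto f\circ\tau$ with $|Tv|\leq|v|\circ\tau$, then $[v]\mapsto Tv$ extends $L^0(\mathbb X)$-linearly over $\mathscr F$, annihilates $\mathscr R$, is nonincreasing for $|\cdot|$, hence descends and extends to a unique morphism $\hat T\colon\tau^*\mathscr M\to\mathscr N$ in ${\bf BanMod}_{\mathbb X}$ with $\hat T\circ\tau^*=T$; uniqueness holds because $\hat T$ is prescribed on the dense generated submodule. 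In particular $(\tau^*\mathscr M,\tau^*)$ is determined up to a unique isomorphism intertwining the two maps $\tau^*$.

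\emph{Functoriality.} Given a morphism $(\eta_1,(\eta_2,\varphi))\colon(\mathbb X_1,(\mathbb Y_1,\mathscr M_1),\tau_1)\to(\mathbb X_2,(\mathbb Y_2,\mathscr M_2),\tau_2)$ — so $\varphi\colon\mathscr M_2\to\mathscr M_1$ is linear over $f\mapsto f\circ\eta_2$ with $|\varphi(v)|\leq|v|\circ\eta_2$ and $\tau_2\circ\eta_1=\eta_2\circ\tau_1$ — one \emph{cannot} directly apply the universal property of $\tau_2^*\mathscr M_2$, because $\tau_1^*\mathscr M_1$ is a module over $L^0(\mathbb X_1)$, not $L^0(\mathbb X_2)$, and precomposition by $\eta_1$ does not preserve pointwise norms. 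Instead I would repeat the construction: define $\psi$ on the free module $\mathscr F_2$ of $\tau_2^*\mathscr M_2$ by $\sum_i g_i[v_i]\mapsto\sum_i (g_i\circ\eta_1)\cdot\tau_1^*\big(\varphi(v_i)\big)\in\tau_1^*\mathscr M_1$; using $\tau_2\circ\eta_1=\eta_2\circ\tau_1$ one checks that $\psi$ annihilates the relations submodule $\mathscr R_2$, that $\psi(g\cdot\xi)=(g\circ\eta_1)\cdot\psi(\xi)$, and that $|\psi(\xi)|\leq|\xi|\circ\eta_1$ (estimate on a representation, using $|\varphi(v_i)|\circ\tau_1\leq|v_i|\circ(\eta_2\circ\tau_1)=|v_i|\circ(\tau_2\circ\eta_1)$ together with $|\tau_1^*\varphi(v_i)|=|\varphi(v_i)|\circ\tau_1$ and the fact that $\bigwedge$ commutes with $\circ\,\eta_1$). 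Hence $\psi$ descends and extends by uniform continuity to a morphism $\psi\colon\tau_2^*\mathscr M_2\to\tau_1^*\mathscr M_1$ making $(\eta_1,\psi)$ a morphism of ${\bf BanMod}$ with $\psi\circ\tau_2^*=\tau_1^*\circ\varphi$, and one sets ${\rm InvIm}\big((\eta_1,(\eta_2,\varphi))\big)\coloneqq\big((\eta_1,\psi),(\eta_2,\varphi)\big)$, which is exactly a morphism $(\tau_1,\tau_1^*)\to(\tau_2,\tau_2^*)$ of ${\bf BanMod}^\to$. Since such a $\psi$ is uniquely characterised by being $\eta_1$-linear, continuous, and equal to $\tau_1^*\circ\varphi$ on $\tau_2^*(\mathscr M_2)$ (hence on the dense generated submodule), the identities ${\rm InvIm}({\rm id})={\rm id}$ and ${\rm InvIm}(\beta\circ\alpha)={\rm InvIm}(\beta)\circ{\rm InvIm}(\alpha)$ follow at once.

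\emph{Uniqueness of the functor, and the main obstacle.} Suppose ${\rm InvIm}'$ is another functor with the stated properties. On an object of the form $(\mathbb X,(\mathbb X,\mathscr N),{\rm id}_{\mathbb X})$ the stated properties force ${\rm InvIm}'$ to return, up to a canonical isomorphism, the pair $({\rm id}_{\mathbb X},{\rm id}_{\mathscr N})$, since $({\rm id}_{\mathbb X})^*\colon\mathscr N\to({\rm id}_{\mathbb X})^*\mathscr N$ is then an isometric $L^0(\mathbb X)$-linear map with dense range, hence an isomorphism. Now for any $(\mathbb X,(\mathbb Y,\mathscr M),\tau)$ and any test pair $(\mathscr N,T)$ with $T\colon\mathscr M\to\mathscr N$ linear over $f\mapsto f\circ\tau$ and $|Tv|\leq|v|\circ\tau$, the couple $(\tau,T)$ is a morphism of ${\bf BanMod}$, so $\big({\rm id}_{\mathbb X},(\tau,T)\big)$ is a morphism $(\mathbb X,(\mathbb X,\mathscr N),{\rm id}_{\mathbb X})\to(\mathbb X,(\mathbb Y,\mathscr M),\tau)$ in the comma category; applying ${\rm InvIm}'$ and reading off its morphism-part produces a morphism $\psi'\colon(\tau^*\mathscr M)'\to\mathscr N$ in ${\bf BanMod}_{\mathbb X}$ with $\psi'\circ(\tau^*)'=T$, unique by density. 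Thus $\big((\tau^*\mathscr M)',(\tau^*)'\big)$ has the same universal property as $(\tau^*\mathscr M,\tau^*)$, so there is a unique isomorphism $\Theta_{(\mathbb X,(\mathbb Y,\mathscr M),\tau)}$ between them intertwining $\tau^*$ and $(\tau^*)'$; the family $\Theta_\star$ is the required natural isomorphism ${\rm InvIm}\Rightarrow{\rm InvIm}'$, its naturality and its own uniqueness being once more consequences of density. The main obstacle throughout is the pointwise-norm identity $|\tau^* v|=|v|\circ\tau$, whose ``$\geq$'' half genuinely needs the norming property of the dual module; a secondary technical nuisance is the base change noted in the functoriality step, which blocks a purely black-box derivation of functoriality from the universal property and forces one to work directly with the free-module presentation.
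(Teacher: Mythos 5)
Your proposal is correct, but it is worth noting that the paper itself does not prove Theorem \ref{thm:inv_im} at all: it defers entirely to the pullback-module construction of \cite[Section 1.6]{Gigli14}, merely asserting that the arguments there adapt to Banach \(L^0\)-modules. Your construction is a legitimate self-contained alternative, and it differs from Gigli's in one structurally interesting way. Gigli builds the ``pre-pullback'' out of formal finite combinations \(\sum_i\nchi_{A_i}v_i\) over measurable \emph{partitions} \((A_i)_i\) of \(\X\), and \emph{defines} the pointwise norm of such an element to be \(\sum_i\nchi_{A_i}(|v_i|\circ\tau)\); with that choice the identity \(|\tau^*v|=|v|\circ\tau\) holds by fiat and the only work is well-posedness on equivalence classes, followed by extension of the scalar multiplication from simple functions to all of \(L^0(\mathbb X)\) by density. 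You instead allow arbitrary \(L^0(\mathbb X)\)-coefficients from the start (free module modulo relations) and take the infimum of the costs of all representations; the price is that the lower bound \(|\tau^*v|\geq|v|\circ\tau\) becomes a genuine theorem, which you correctly settle by testing against dual elements \(\omega\in\mathscr M^*\) with \(|\omega|\leq 1\) and using countable realisation of suprema (Remark \ref{rmk:Dedekind_compl}) so that the supremum commutes with precomposition by \(\tau\). This step is sound, but be aware that the norming property of \(\mathscr M^*\) --- i.e.\ the \(L^0\)-module Hahn--Banach theorem --- is itself one of the statements that must be transplanted from the \(L^p\)-normed \(L^\infty\)-module setting of \cite{Gigli14} to the \(L^0\) setting (see \cite{Gigli17} or \cite{GLP22} for versions stated directly for \(L^0\)-normed modules), so your citation should point there rather than to \cite{Gigli14} alone; without that input your lower bound has no proof. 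The remaining ingredients --- the universal property, the base-change observation that blocks a black-box derivation of functoriality (so that \(\psi\) must be built on the free-module presentation and estimated via \(|\xi|\circ\eta_1\), using that countable infima commute with \(\circ\,\eta_1\) thanks to \(\eta_{1\#}\mm_{\X_1}\ll\mm_{\X_2}\)), and the uniqueness argument via the objects \((\mathbb X,(\mathbb X,\mathscr N),{\rm id}_{\mathbb X})\), on which the axioms force \(({\rm id}_{\mathbb X})^*\) to be an isomorphism --- are all correct and nicely executed. In short: what Gigli's route buys is that the norm identity is free and no duality theory is needed; what your route buys is a cleaner algebraic presentation on which the universal property and functoriality are immediate to state, at the cost of importing the \(L^0\)-module Hahn--Banach theorem.
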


The statement of Theorem \ref{thm:inv_im} is a reformulation of various results about `pullback modules' contained in \cite[Section 1.6]{Gigli14},
or rather of their corresponding versions for Banach \(L^0\)-modules, which can be obtained by suitably adapting the same proof arguments. Let us briefly
comment on the terminology: in \cite{Gigli14} the Banach module \(\tau^*\mathscr M\) is called the \emph{pullback module} of \(\mathscr M\)
with respect to \(\tau\), by analogy with the notion of pullback that is commonly used in differential geometry. Since in this paper we are studying these
topics from the perspective of category theory, we prefer to adopt the term `inverse image', in order to avoid confusion with the categorical
notion of pullback. Nevertheless, the two concepts are strongly related, as observed in \cite[Remark 1.6.4]{Gigli14}. Namely:
\begin{theorem}
Let \(\tau\colon\mathbb X\to\mathbb Y\) be a morphism in \({\bf Meas}_\sigma\) and \(\mathscr M\) a Banach \(L^0(\mathbb Y)\)-module.
Then the pullback of \(({\rm id}_{\mathbb Y},0)\colon(\mathbb Y,\mathscr M)\to(\mathbb Y,\{0\})\) and
\((\tau,0)\colon(\mathbb X,\{0\})\to(\mathbb Y,\{0\})\) exists in \({\bf BanMod}\):
\[
(\mathbb Y,\mathscr M)\times_{(\mathbb Y,\{0\})}(\mathbb X,\{0\})\cong(\mathbb X,\tau^*\mathscr M)
\]
together with the morphisms \((\tau,\tau^*)\colon(\mathbb X,\tau^*\mathscr M)\to(\mathbb Y,\mathscr M)\)
and \(({\rm id}_{\mathbb X},0)\colon(\mathbb X,\tau^*\mathscr M)\to(\mathbb X,\{0\})\).
\end{theorem}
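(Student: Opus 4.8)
\emph{Proof strategy.} The plan is to transport the whole statement through the forgetful functor $\Pi_M\colon{\bf BanMod}\to{\bf Meas}_\sigma$, reducing it to a property of the module components alone, and then to appeal to the universal property of the inverse image module contained in Theorem~\ref{thm:inv_im}. First one checks that the displayed square commutes: by the composition rule $(\eta,\psi)\circ(\tau',\varphi)=(\eta\circ\tau',\varphi\circ\psi)$, both composites $(\X,\tau^*\mathscr M)\to(\mathbb Y,\{0\})$ equal $(\tau,0)$, where now $0\colon\{0\}\to\tau^*\mathscr M$ denotes the zero operator. Next, let $(\mathbb W,\mathscr Q)$ be any object of ${\bf BanMod}$ together with morphisms $q_1\colon(\mathbb W,\mathscr Q)\to(\mathbb Y,\mathscr M)$ and $q_2\colon(\mathbb W,\mathscr Q)\to(\X,\{0\})$ with $({\rm id}_{\mathbb Y},0)\circ q_1=(\tau,0)\circ q_2$. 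Applying $\Pi_M$ yields $\Pi_M(q_1)=\tau\circ\Pi_M(q_2)$, whereas the module component of this equality is automatic (both sides are operators out of $\{0\}$). Thus, writing $q_1=(\sigma_1,\alpha_1)$ and $\sigma_2\coloneqq\Pi_M(q_2)$, one has $\sigma_1=\tau\circ\sigma_2$ and $q_2=(\sigma_2,0)$. A candidate mediating morphism must have the form $\Phi=(\sigma,b)\colon(\mathbb W,\mathscr Q)\to(\X,\tau^*\mathscr M)$; the equation $({\rm id}_{\X},0)\circ\Phi=q_2$ forces $\sigma=\sigma_2$ and says nothing about $b$, while $(\tau,\tau^*)\circ\Phi=q_1$ becomes $b\circ\tau^*=\alpha_1$. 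Hence the existence and uniqueness of the mediating morphism is \emph{equivalent} to the following: there is a unique operator $b\colon\tau^*\mathscr M\to\mathscr Q$ which is linear, satisfies $b(f\cdot w)=(f\circ\sigma_2)\cdot b(w)$ for all $f\in L^0(\X)$ and $w\in\tau^*\mathscr M$, satisfies $|b(w)|\leq|w|\circ\sigma_2$, and obeys $b(\tau^* v)=\alpha_1(v)$ for every $v\in\mathscr M$.

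This is precisely the statement that $(\tau,\tau^*)$ is a cartesian arrow over $\tau$ for the functor $\Pi_M$, and it is part of the standard theory of pullback modules: one may simply invoke \cite[Section~1.6]{Gigli14} (see in particular \cite[Remark~1.6.4]{Gigli14}). If instead one wishes to argue directly from Theorem~\ref{thm:inv_im}, the plan is this. Since the $L^0(\X)$-submodule $\mathscr S$ generated by $\tau^*(\mathscr M)$ is dense in $\tau^*\mathscr M$, the requirements on $b$ force
\[
b\Big(\sum_{k=1}^n f_k\cdot\tau^* v_k\Big)=\sum_{k=1}^n(f_k\circ\sigma_2)\cdot\alpha_1(v_k)
\]
for every element $\sum_{k=1}^n f_k\cdot\tau^* v_k$ of $\mathscr S$, which we take as the definition of $b$ on $\mathscr S$; granting that this is well posed and that $|b(w)|\leq|w|\circ\sigma_2$ on $\mathscr S$, the completeness of $\mathscr Q$ lets $b$ extend uniquely to a morphism on all of $\tau^*\mathscr M$, and uniqueness follows. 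The one substantial point is the pointwise inequality
\[
\Big|\sum_{k=1}^n(f_k\circ\sigma_2)\cdot\alpha_1(v_k)\Big|\ \leq\ \Big(\Big|\sum_{k=1}^n f_k\cdot\tau^* v_k\Big|\Big)\circ\sigma_2\qquad\text{in }L^0(\mathbb W):
\]
its right-hand side vanishes whenever $\sum_k f_k\cdot\tau^* v_k=0$, so $b$ is independent of the representation, and the inequality is exactly the required bound. The identity $b(\tau^* v)=\alpha_1(v)$ is then the case $n=1$, $f_1=\nchi_\X$ (using $\nchi_\X\circ\sigma_2=\nchi_{\mathbb W}$).

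The hard part is this pointwise inequality, and I would prove it by reducing to simple coefficients. Using the density of simple functions in $L^0(\X)$, the continuity of the module operations of $\tau^*\mathscr M$ and $\mathscr Q$, and the continuity of the substitution map $g\mapsto g\circ\sigma_2$ from $L^0(\X)$ to $L^0(\mathbb W)$ (valid because $\sigma_2$ is a morphism in ${\bf Meas}_\sigma$, so $(\sigma_2)_\#\mm_{\mathbb W}\ll\mm_{\X}$), one reduces to the case in which the $f_k$ are subordinate to a common finite measurable partition $\{A_j\}_j$ of $\X$. Then $\sum_k f_k\cdot\tau^* v_k=\sum_j\nchi_{A_j}\cdot\tau^* w_j$ with $w_j\in\mathscr M$ obtained by collecting the constant coefficients, so by disjointness of the $A_j$ and the identity $|\tau^* w_j|=|w_j|\circ\tau$ from Theorem~\ref{thm:inv_im} its pointwise norm is $\sum_j\nchi_{A_j}(|w_j|\circ\tau)$; symmetrically $\sum_k(f_k\circ\sigma_2)\cdot\alpha_1(v_k)=\sum_j\nchi_{\sigma_2^{-1}(A_j)}\cdot\alpha_1(w_j)$, whose pointwise norm equals $\sum_j\nchi_{\sigma_2^{-1}(A_j)}|\alpha_1(w_j)|\leq\sum_j\nchi_{\sigma_2^{-1}(A_j)}(|w_j|\circ\sigma_1)=\big(\sum_j\nchi_{A_j}(|w_j|\circ\tau)\big)\circ\sigma_2$, the last step using $\sigma_1=\tau\circ\sigma_2$ and the morphism property $|\alpha_1(w_j)|\leq|w_j|\circ\sigma_1$. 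This gives the inequality. The remaining verifications — that the continuous extension of $b$ is still linear over $\sigma_2$, and that the package assembles into the pullback with projections $(\tau,\tau^*)$ and $({\rm id}_{\X},0)$ — are routine and I would dispatch them quickly.
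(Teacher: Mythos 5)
Your argument is correct. A preliminary remark: the paper offers no proof of this theorem at all --- it is stated as a reformulation of \cite[Remark 1.6.4]{Gigli14} --- so there is no in-text argument to compare yours against line by line. Your reduction is, however, exactly the intended one: unwinding the universal property of the pullback in \({\bf BanMod}\) along \(\Pi_M\) shows that any mediating morphism has measure-space component forced to be \(\sigma_2\), and that its module component must be the unique operator \(b\colon\tau^*\mathscr M\to\mathscr Q\) that is linear over \(\sigma_2\), satisfies \(|b(w)|\leq|w|\circ\sigma_2\), and obeys \(b\circ\tau^*=\alpha_1\); this is the universal property of the inverse-image module. The one point worth flagging is that this universal property is \emph{not} literally part of the statement of Theorem \ref{thm:inv_im} as recorded in the paper, so you cannot simply cite that theorem; your direct derivation of it from the two properties Theorem \ref{thm:inv_im} does supply (density in \(\tau^*\mathscr M\) of the \(L^0(\mathbb X)\)-submodule generated by \(\tau^*(\mathscr M)\), and the identity \(|\tau^*v|=|v|\circ\tau\)) is therefore the substantive content of the proof, and it is sound. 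In particular, the pointwise inequality that gives both well-posedness and contractivity of \(b\) on the generated submodule is correctly established for simple coefficients by refining to a common finite partition (using disjointness to compute both pointwise norms exactly), and the passage to general coefficients is legitimate because \(g\mapsto g\circ\sigma_2\) is continuous on \(L^0\) thanks to the condition \((\sigma_2)_\#\mm_{\mathbb W}\ll\mm_\X\) built into the definition of \({\bf Meas}_\sigma\), while the module operations and the pointwise norm are continuous as well. The extension by completeness of \(\mathscr Q\) and the uniqueness argument via density are indeed routine, as you say.
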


Given a fixed morphism \(\tau\colon\mathbb X\to\mathbb Y\) in \({\bf Meas}_\sigma\), the inverse image functor induces a functor
\[
{\rm InvIm}_\tau\colon{\bf BanMod}_{\mathbb Y}\to{\bf BanMod}_{\mathbb X}
\]
in the following way: for any Banach \(L^0(\mathbb Y)\)-module \(\mathscr M\) we define \({\rm InvIm}_\tau(\mathscr M)\coloneqq\tau^*\mathscr M\) and
for any morphism \(\varphi\colon\mathscr M\to\mathscr N\) in \({\bf BanMod}_{\mathbb Y}\) we define \({\rm InvIm}_\tau(\varphi)\coloneqq\tau^*\varphi\),
where by \(\tau^*\varphi\colon\tau^*\mathscr M\to\tau^*\mathscr N\) we mean the unique morphism in \({\bf BanMod}_{\mathbb X}\) with
\({\rm InvIm}\big(({\rm id}_{\mathbb X},({\rm id}_{\mathbb Y},\varphi)\big)=\big(({\rm id}_{\mathbb X},\tau^*\varphi),({\rm id}_{\mathbb Y},\varphi)\big)\).
\begin{example}\label{ex:LB_as_pullback}{\rm
Let \(\mathbb X\) be a finite measure space. Let \(\mathscr M\) be a given Banach \(L^0(\mathbb Y)\)-module,
for some \(\sigma\)-finite measure space \(\mathbb Y\). We define \(\pi_\Y\colon\X\times\Y\to\Y\)
as \(\pi_\Y(x,y)\coloneqq y\) for every \((x,y)\in\X\times\Y\). Clearly, \(\pi_\Y\) is measurable and
\((\pi_\Y)_\#(\mm_\X\otimes\mm_\Y)=\mm_\X(\X)\mm_\Y\), so that \(\pi_\Y\colon\mathbb X\times\mathbb Y\to\mathbb Y\)
is a morphism in \({\bf Meas}_\sigma\). We define \(c\colon\mathscr M\to L^0(\mathbb X;\mathscr M)\)
as \(c(v)\coloneqq\nchi_\X v\) for every \(v\in\mathscr M\). Then we claim that
\begin{equation}\label{eq:LB_as_pullback}
\big(L^0(\mathbb X;\mathscr M),c\big)\cong(\pi_\Y^*\mathscr M,\pi_\Y^*).
\end{equation}
Its validity follows from the linearity of \(c\), from the fact that for any given \(v\in\mathscr M\) the identities
\[
|c(v)|(x,y)=\big|c(v)(x)\big|(y)=|v|(y)=|v|\big(\pi_\Y(x,y)\big)=(|v|\circ\pi_\Y)(x,y)
\]
are verified for \((\mm_\X\otimes\mm_\Y)\)-a.e.\ \((x,y)\), and from the density of simple maps in \(L^0(\mathbb X;\mathscr M)\).
\fr}\end{example}

In the following result, we check (by a direct verification) that \({\rm InvIm}_\tau\) preserves direct limits:
\begin{proposition}
Let \(\tau\colon\mathbb X\to\mathbb Y\) be a morphism in \({\bf Meas}_\sigma\). Let \(\big(\{\mathscr M_i\}_{i\in I},\{\varphi_{ij}\}_{i\leq j}\}\big)\)
be a direct system in \({\bf BanMod}_{\mathbb Y}\), with direct limit \(\big(\varinjlim\mathscr M_\star,\{\varphi_i\}_{i\in I}\big)\).
Then \(\big(\{\tau^*\mathscr M_i\}_{i\in I},\{\tau^*\varphi_{ij}\}_{i\leq j}\big)\) is a direct system in \({\bf BanMod}_{\mathbb X}\),
whose direct limit is given by
\begin{equation}\label{eq:dir_lim_and_inv_im}
\varinjlim\tau^*\mathscr M_\star\cong\tau^*\varinjlim\mathscr M_\star
\end{equation}
together with the family \(\{\tau^*\varphi_i\}_{i\in I}\) of morphisms \(\tau^*\varphi_i\colon\tau^*\mathscr M_i\to\tau^*\varinjlim\mathscr M_\star\).
\end{proposition}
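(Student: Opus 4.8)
The plan is to check the universal property of the cocone $\big(\tau^*\varinjlim\mathscr M_\star,\{\tau^*\varphi_i\}_{i\in I}\big)$ by hand, on the basis of two elementary features of the inverse image. \textbf{Step 1 (the $\mathrm{Hom}$-bijection).} Given a Banach $L^0(\mathbb Y)$-module $\mathscr P$ and a Banach $L^0(\mathbb X)$-module $\mathscr Q$, reading off the universal property of the pullback $(\mathbb X,\tau^*\mathscr P)\cong(\mathbb Y,\mathscr P)\times_{(\mathbb Y,\{0\})}(\mathbb X,\{0\})$ in ${\bf BanMod}$ (the theorem identifying the inverse image with a pullback) against the test object $(\mathbb X,\mathscr Q)$ equipped with the measure morphism $\mathrm{id}_{\mathbb X}$ shows that $\psi\mapsto\psi\circ\tau^*$ is a bijection from $\mathrm{Hom}_{{\bf BanMod}_{\mathbb X}}(\tau^*\mathscr P,\mathscr Q)$ onto the set $\mathcal S_\tau(\mathscr P,\mathscr Q)$ of all linear maps $T\colon\mathscr P\to\mathscr Q$ with $T(f\cdot v)=(f\circ\tau)\cdot T(v)$ and $|T(v)|\leq|v|\circ\tau$ for all $f\in L^0(\mathbb Y)$ and $v\in\mathscr P$. \textbf{Step 2 (naturality of $\tau^*$).} Since ${\rm InvIm}\big((\mathrm{id}_{\mathbb X},(\mathrm{id}_{\mathbb Y},\varphi))\big)=\big((\mathrm{id}_{\mathbb X},\tau^*\varphi),(\mathrm{id}_{\mathbb Y},\varphi)\big)$ is, by the very definition of ${\rm InvIm}_\tau$, a morphism in ${\bf BanMod}^\to$, for every morphism $\varphi\colon\mathscr P\to\mathscr P'$ in ${\bf BanMod}_{\mathbb Y}$ one has $\tau^*\varphi\circ\tau^*=\tau^*\circ\varphi$, where the two outer occurrences of $\tau^*$ denote the canonical maps $\mathscr P\to\tau^*\mathscr P$ and $\mathscr P'\to\tau^*\mathscr P'$; in particular $\tau^*$ carries $\varphi(\mathscr P)$ onto $(\tau^*\varphi)\big(\tau^*(\mathscr P)\big)$.

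Granting these, that $\big(\{\tau^*\mathscr M_i\}_{i\in I},\{\tau^*\varphi_{ij}\}_{i\leq j}\big)$ is a direct system and $\big(\tau^*\varinjlim\mathscr M_\star,\{\tau^*\varphi_i\}_{i\in I}\big)$ a cocone over it follows at once from the functoriality of ${\rm InvIm}_\tau$. To establish the universal property I would fix a Banach $L^0(\mathbb X)$-module $\mathscr N$ and a cocone $\{\psi_i\colon\tau^*\mathscr M_i\to\mathscr N\}_{i\in I}$, translate the $\psi_i$ through Step 1 into maps $T_i\coloneqq\psi_i\circ\tau^*\in\mathcal S_\tau(\mathscr M_i,\mathscr N)$, and use Step 2 together with $\psi_j\circ\tau^*\varphi_{ij}=\psi_i$ to get $T_j\circ\varphi_{ij}=T_i$ for $i\leq j$. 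Regarding $\mathscr N$ as an $L^0(\mathbb Y)$-module through the ring homomorphism $f\mapsto f\circ\tau$, the $T_i$ become $L^0(\mathbb Y)$-linear and compatible, so they factor through the algebraic direct limit $\big(M,\{\tilde\varphi_i\}_{i\in I}\big)$ of $\{{\sf f}_{\mathbb Y}(\mathscr M_i)\}_{i\in I}$ appearing in Proposition \ref{prop:direct_lim_BanModX}, producing a unique $L^0(\mathbb Y)$-linear $\hat T\colon M\to\mathscr N$ with $\hat T\circ\tilde\varphi_i=T_i$. The bounds $|T_i(v)|\leq|v|\circ\tau$ force $|\hat T(w)|\leq|w|\circ\tau$ for every $w\in M$: a representative $w=\tilde\varphi_i(v)$ gives $|\hat T(w)|\leq|v|\circ\tau$, and one checks, using Remark \ref{rmk:Dedekind_compl} (so that the infimum defining $|w|$ is attained along a countable subfamily, hence commutes with precomposition by $\tau$), that $\bigwedge_{(i,v)}\big(|v|\circ\tau\big)\leq\big(\bigwedge_{(i,v)}|v|\big)\circ\tau=|w|\circ\tau$. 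In particular $\hat T$ kills $\sim_{|\cdot|}$, so it descends to $M/\sim_{|\cdot|}$ and, by density of $M/\sim_{|\cdot|}$ in its completion and completeness of $\mathscr N$, extends uniquely to some $T\in\mathcal S_\tau(\varinjlim\mathscr M_\star,\mathscr N)$ with $T\circ\varphi_i=T_i$ for every $i$.

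Applying Step 1 once more yields a morphism $\Phi\colon\tau^*\varinjlim\mathscr M_\star\to\mathscr N$ in ${\bf BanMod}_{\mathbb X}$ with $\Phi\circ\tau^*=T$, and then Step 2 gives $\Phi\circ\tau^*\varphi_i\circ\tau^*=\Phi\circ\tau^*\circ\varphi_i=T\circ\varphi_i=T_i=\psi_i\circ\tau^*$, so $\Phi\circ\tau^*\varphi_i=\psi_i$ by the injectivity in Step 1. For the uniqueness of $\Phi$, I would show that the $L^0(\mathbb X)$-submodule of $\tau^*\varinjlim\mathscr M_\star$ generated by $\bigcup_{i\in I}(\tau^*\varphi_i)\big(\tau^*\mathscr M_i\big)$ is dense: by Step 2 this union contains the image under the canonical map $\tau^*$ of $\bigcup_i\varphi_i(\mathscr M_i)$, the latter set is dense in $\varinjlim\mathscr M_\star$ since the algebraic direct limit over the directed set $(I,\leq)$ is the union of the images of the $\tilde\varphi_i$, the canonical map $\tau^*$ is continuous (it preserves the pointwise norm), and the $L^0(\mathbb X)$-submodule generated by $\tau^*\big(\varinjlim\mathscr M_\star\big)$ is dense by Theorem \ref{thm:inv_im}; hence two morphisms out of $\tau^*\varinjlim\mathscr M_\star$ that agree on each $(\tau^*\varphi_i)(\tau^*\mathscr M_i)$ coincide.

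The hard part will be the bookkeeping across the three ambient categories ${\bf BanMod}_{\mathbb Y}$, ${\bf BanMod}_{\mathbb X}$ and ${\bf BanMod}$ — in particular, extracting the $\mathrm{Hom}$-bijection of Step 1 from the pullback description while keeping the variance conventions straight, and coping with the fact that the target $\mathscr N$ is a genuine Banach $L^0(\mathbb X)$-module rather than an $L^0(\mathbb Y)$-module, so that the intermediate map $T$ is only $\tau$-semilinear; one must then verify that the direct-limit recipe of Proposition \ref{prop:direct_lim_BanModX}, carried out inside $L^0(\mathbb Y)\text{-}{\bf Mod}$, still operates with such a target and that the pointwise-norm estimate survives both the passage to the infimum-seminorm and the completion. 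These last checks are routine once one invokes Remark \ref{rmk:Dedekind_compl} to see that the relevant infima in $L^0$ are countably attained and therefore commute with precomposition by $\tau$.
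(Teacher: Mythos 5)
Your proof is correct and follows essentially the same route as the paper's: both verify the universal property directly by building a $\tau$-semilinear map on the algebraic direct limit in $L^0(\mathbb Y)\text{-}{\bf Mod}$, establishing the pointwise-norm bound $|\cdot|\leq|\cdot|\circ\tau$, descending through the quotient and completion of Proposition \ref{prop:direct_lim_BanModX}, factoring through $\tau^*$ via its universal property, and proving uniqueness by the density of the $L^0(\mathbb X)$-submodule generated by $\bigcup_i\tau^*(\varphi_i(\mathscr M_i))$. The only differences are presentational: you isolate the universal property of $\tau^*$ as an explicit Hom-bijection extracted from the pullback description (the paper invokes it directly at the end), and you justify the passage of the infimum-seminorm through precomposition by $\tau$ via the countable attainment of Remark \ref{rmk:Dedekind_compl}, a point the paper leaves implicit.
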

\begin{proof}
Clearly, we have \((\tau^*\varphi_j)\circ(\tau^*\varphi_{ij})=\tau^*\varphi_i\) for all \(i,j\in I\) with \(i\leq j\). Fix any
\(\big(\mathscr N,\{\psi_i\}_{i\in I}\big)\), where \(\mathscr N\) is a Banach \(L^0(\mathbb X)\)-module and the morphisms
\(\psi_i\colon\tau^*\mathscr M_i\to\mathscr N\) satisfy \(\psi_j\circ(\tau^*\varphi_{ij})=\psi_i\) for every \(i\leq j\).
We claim that there exists a unique morphism \(\Phi\colon\tau^*\varinjlim\mathscr M_\star\to\mathscr N\) such that
\begin{equation}\label{eq:char_Phi}
\Phi\big(\tau^*(\varphi_i(v))\big)=\psi_i(\tau^*v)\quad\text{ for every }i\in I\text{ and }v\in\mathscr M_i,
\end{equation}
whence the statement follows. Given that \(\bigcup_{i\in I}\varphi_i(\mathscr M_i)\) is a dense \(L^0(\mathbb Y)\)-submodule of
\(\varinjlim\mathscr M_\star\) by Proposition \ref{prop:direct_lim_BanModX}, the \(L^0(\mathbb X)\)-submodule
of \(\tau^*\varinjlim\mathscr M_\star\) generated by \(\bigcup_{i\in I}\big\{\tau^*(\varphi_i(v))\,:\,v\in\mathscr M_i\big\}\)
is dense in \(\tau^*\varinjlim\mathscr M_\star\), which forces the uniqueness of the morphism \(\Phi\).
We now pass to the verification of (the well-posedness and of) the existence of \(\Phi\). We denote by \((M,\{\tilde\varphi_i\}_{i\in I})\) the direct limit of
\(\big(\{{\sf f}_{\mathbb Y}(\mathscr M_i)\}_{i\in I},\{{\sf f}_{\mathbb Y}(\varphi_{ij})\}_{i\leq j}\big)\) in \(L^0(\mathbb Y)\text{-}{\bf Mod}\).
Observe that the map \(\tilde\Psi\colon M\to\mathscr N\), which we define as \(\tilde\Psi\big(\tilde\varphi_i(v)\big)\coloneqq\psi_i(\tau^*v)\) for every \(i\in I\)
and \(v\in\mathscr M_i\), is well-posed and linear. Indeed, if \(z\in M\) can be written as \(z=\tilde\varphi_i(v)=\tilde\varphi_j(w)\), then
(recalling how direct limits in \(L^0(\mathbb Y)\text{-}{\bf Mod}\) are constructed) we have \(\varphi_{ik}(v)=\varphi_{jk}(w)\) for some \(k\in I\) with \(i,j\leq k\),
so that the commutativity of the diagram
\[\begin{tikzcd}
\mathscr M_i \arrow[d,swap,"\tau^*"] \arrow[r,"\varphi_{ik}"] & \mathscr M_k \arrow[d,swap,"\tau^*"] & \mathscr M_j \arrow[l,swap,"\varphi_{jk}"] \arrow[d,"\tau^*"] \\
\tau^*\mathscr M_i \arrow[dr,swap,"\psi_i"] \arrow[r,"\tau^*\varphi_{ik}"] & \tau^*\mathscr M_k \arrow[d,swap,"\psi_k"] & \tau^*\mathscr M_j \arrow[l,swap,"\tau^*\varphi_{jk}"]
\arrow[dl,"\psi_j"] \\
& \mathscr N &
\end{tikzcd}\]
implies that \(\psi_i(\tau^*v)=\psi_j(\tau^*w)\), thus showing that \(\tilde\Psi(z)\) is well-posed. The linearity of \(\tilde\Psi\) follows by construction. Moreover, we can estimate
\(|\tilde\Psi(z)|\leq|v|\circ\tau\) for every \(z\in M\), \(i\in I\), and \(v\in\mathscr M_i\) with \(\tilde\varphi_i(v)=z\), whence it follows that \(|\tilde\Psi(z)|\leq|z|\circ\tau\)
for every \(z\in M\). It is then easy to check that there is a unique linear map \(\Psi\colon\overline{M/\sim_{|\cdot|}}\to\mathscr N\) satisfying
\(\Psi\big((\iota\circ\pi)(z)\big)=\tilde\Psi(z)\) for every \(z\in M\), where \(\pi\colon M\to M/\sim_{|\cdot|}\) is the projection and \(\big(\overline{M/\sim_{|\cdot|}},\iota\big)\)
is the completion of \(M/\sim_{|\cdot|}\). Notice also that \(|\Psi(z)|\leq|z|\circ\tau\) for every \(z\in\overline{M/\sim_{|\cdot|}}\). Recalling from Proposition \ref{prop:direct_lim_BanModX}
that \(\varinjlim\mathscr M_\star\cong\overline{M/\sim_{|\cdot|}}\), we conclude that there exists a unique morphism \(\Phi\colon\tau^*\varinjlim\mathscr M_\star\to\mathscr N\) for which the diagram
\[\begin{tikzcd}
\varinjlim\mathscr M_\star \arrow[d,swap,"\tau^*"] \arrow[r,"\Psi"] & \mathscr N \\
\tau^*\varinjlim\mathscr M_\star \arrow[ur,swap,"\Phi"] &
\end{tikzcd}\]
commutes, which is equivalent to requiring the validity of \eqref{eq:char_Phi}. This proves the statement.
\end{proof}

It also holds that if \(\tau\colon\mathbb X\to\mathbb Y\) is a morphism in \({\bf Meas}_\sigma\) and
\(\big(\{\mathscr M_i\}_{i\in I},\{{\rm P}_{ij}\}_{i\leq j}\big)\) is an inverse system in
\({\bf BanMod}_{\mathbb Y}\), then \(\big(\{\tau^*\mathscr M_i\}_{i\in I},\{\tau^*{\rm P}_{ij}\}_{i\leq j}\big)\)
is an inverse system in \({\bf BanMod}_{\mathbb X}\). Nevertheless, it can happen that
\(\varprojlim\tau^*\mathscr M_\star\ncong\tau^*\varprojlim\mathscr M_\star\). In other words, the functor
\({\rm InvIm}_\tau\) does not necessarily preserve inverse limits, as it is shown in Example \ref{ex:InvIm_no_inverse} below.
\section{The hom-functors in \texorpdfstring{\({\bf BanMod}_{\mathbb X}\)}{BanModX}}
Let \(\mathbb X\) be a given \(\sigma\)-finite measure space. Let \(\mathscr M\), \(\mathscr N\) be Banach \(L^0(\mathbb X)\)-modules. The functors
\[
\textsc{Hom}(\mathscr M,-)\colon{\bf BanMod}_{\mathbb X}\to{\bf BanMod}_{\mathbb X},\qquad
\textsc{Hom}(-,\mathscr N)\colon{\bf BanMod}_{\mathbb X}^{\rm op}\to{\bf BanMod}_{\mathbb X},
\]
which we call the \emph{hom-functors} in \({\bf BanMod}_{\mathbb X}\), are defined in the following way:
\begin{itemize}
\item Given an object \(\mathscr Q\) of \({\bf BanMod}_{\mathbb X}\), we set
\(\textsc{Hom}(\mathscr M,-)(\mathscr Q)\coloneqq\textsc{Hom}(\mathscr M,\mathscr Q)\).
Given a morphism \(\varphi\colon\mathscr Q\to\mathscr R\) in \({\bf BanMod}_{\mathbb X}\), we set
\(\textsc{Hom}(\mathscr M,-)(\varphi)\colon\textsc{Hom}(\mathscr M,\mathscr Q)\to\textsc{Hom}(\mathscr M,\mathscr R)\)
as \(\textsc{Hom}(\mathscr M,-)(\varphi)(T)\coloneqq\varphi\circ T\) for every \(T\in\textsc{Hom}(\mathscr M,\mathscr Q)\).
\item Given an object \(\mathscr Q\) of \({\bf BanMod}_{\mathbb X}\), we set
\(\textsc{Hom}(-,\mathscr N)(\mathscr Q)\coloneqq\textsc{Hom}(\mathscr Q,\mathscr N)\).
Given a morphism \(\varphi\colon\mathscr R\to\mathscr Q\) in \({\bf BanMod}_{\mathbb X}\), we set
\(\textsc{Hom}(-,\mathscr N)(\varphi^{\rm op})\colon\textsc{Hom}(\mathscr Q,\mathscr N)\to\textsc{Hom}(\mathscr R,\mathscr N)\)
as \(\textsc{Hom}(-,\mathscr N)(\varphi^{\rm op})(T)\coloneqq T\circ\varphi\) for every \(T\in\textsc{Hom}(\mathscr Q,\mathscr N)\).
\end{itemize}

In the following result, we obtain the expected continuity properties of the two hom-functors. We prove the statement directly,
rather than by applying a general principle, which would amount to showing that hom-functors are left/right adjoints to a
suitable notion of tensor product. The study of tensor products of Banach \(L^0\)-modules is committed to a future work.
\begin{proposition}[Continuity properties of the hom-functors]\label{prop:cont_hom-fct}
Let \(\mathbb X\) be a \(\sigma\)-finite measure space. Let \(\mathscr M\), \(\mathscr N\) be given Banach \(L^0(\mathbb X)\)-modules.
Then the following properties are verified:
\begin{itemize}
\item Let \(D\colon{\bf J}\to{\bf BanMod}_{\mathbb X}\) be a small diagram and denote by \((\mathscr L,\lambda_\star)\) its limit.
Then the limit of the diagram \(\tilde D\coloneqq\textsc{Hom}(\mathscr M,-)\circ D\colon{\bf J}\to{\bf BanMod}_{\mathbb X}\) is given by
\[
\big(\textsc{Hom}(\mathscr M,\mathscr L),\textsc{Hom}(\mathscr M,-)(\lambda_\star)\big).
\]
\item Let \(D\colon{\bf J}\to{\bf BanMod}_{\mathbb X}\) be a small diagram and denote by \((\mathscr C,c_\star)\) its colimit. We define the diagram
\(\tilde D\colon{\bf J}^{\rm op}\to{\bf BanMod}_{\mathbb X}\) as \(\tilde D(i)\coloneqq\textsc{Hom}(D(i),\mathscr N)\) for every \(i\in{\rm Ob}_{\bf J}\),
while we set \(\tilde D(\phi^{\rm op})\coloneqq\textsc{Hom}(-,\mathscr N)(D(\phi)^{\rm op})\) for every \(\phi\in{\rm Hom}_{\bf J}\). Then the limit of \(\tilde D\) is given by
\[
\big(\textsc{Hom}(\mathscr C,\mathscr N),\textsc{Hom}(-,\mathscr N)(c_\star^{\rm op})\big).
\]
\end{itemize}
\end{proposition}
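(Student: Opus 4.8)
The plan is to derive both statements from a single fact: the internal-hom functors $\textsc{Hom}(\mathscr M,-)\colon{\bf BanMod}_{\mathbb X}\to{\bf BanMod}_{\mathbb X}$ and $\textsc{Hom}(-,\mathscr N)\colon{\bf BanMod}_{\mathbb X}^{\rm op}\to{\bf BanMod}_{\mathbb X}$ both preserve all small limits. For the second item this suffices because a limit of $\tilde D$ is precisely a limit of $\textsc{Hom}(-,\mathscr N)$ composed with the opposite of the diagram $D$, and $\textsc{Hom}(-,\mathscr N)(c_\star^{\rm op})$ is the image under $\textsc{Hom}(-,\mathscr N)$ of the colimit cocone of $D$. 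To prove that a functor $F$ (out of ${\bf BanMod}_{\mathbb X}$ or out of ${\bf BanMod}_{\mathbb X}^{\rm op}$) preserves all small limits, it is enough — by the construction in the proof of Theorem \ref{thm:suff_(co)complete}, which realises an arbitrary limit as the equaliser of a canonical parallel pair between two products — to verify that $F$ preserves products and equalisers: if it does, then it sends that canonical parallel pair to the one associated with $F$ applied to the diagram, and preservation of the relevant equaliser delivers the limit of the transformed diagram together with the transformed cone. Since "$F=\textsc{Hom}(-,\mathscr N)$ preserves products and equalisers in ${\bf BanMod}_{\mathbb X}^{\rm op}$" means exactly that it carries coproducts and coequalisers of ${\bf BanMod}_{\mathbb X}$ to products and equalisers of ${\bf BanMod}_{\mathbb X}$, the whole proof reduces to four elementary verifications, each of which uses the explicit descriptions from Theorems \ref{thm:prod_BanMod}, \ref{thm:coprod_BanMod}, \ref{thm:kernel_BanMod}, and \ref{thm:cokernel_BanMod}.

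For products, given a family $\{\mathscr P_i\}_{i\in I}$ one checks that $T\mapsto(\pi_i\circ T)_{i\in I}$ is an isomorphism of Banach $L^0(\mathbb X)$-modules from $\textsc{Hom}\big(\mathscr M,\ell_\infty(\mathscr P_\star)\big)$ onto $\ell_\infty\big((\textsc{Hom}(\mathscr M,\mathscr P_i))_{i\in I}\big)=\prod_{i\in I}^{{\bf BanMod}_{\mathbb X}}\textsc{Hom}(\mathscr M,\mathscr P_i)$, the crucial identity being $|T|=\bigvee_{i\in I}|\pi_i\circ T|$; the inverse sends $(S_i)_{i\in I}$ to $v\mapsto(S_i(v))_{i\in I}$, which is well posed since $\bigvee_{i\in I}|S_i(v)|\leq\big(\bigvee_{i\in I}|S_i|\big)|v|\in L^0(\mathbb X)$, and comparing the two $L^0$-bounds gives the isometry; moreover $\textsc{Hom}(\mathscr M,-)(\pi_i)$ corresponds under this isomorphism to the $i$-th projection, so the product cone is preserved. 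For equalisers of $a,b\colon\mathscr P\to\mathscr Q$, Theorem \ref{thm:kernel_BanMod} presents ${\rm Eq}(a,b)$ as the submodule $\{w\in\mathscr P:a(w)=b(w)\}$ of $\mathscr P$ with the restricted pointwise norm, and a homomorphism $T\colon\mathscr M\to\mathscr P$ takes values in it precisely when $a\circ T=b\circ T$; this identifies $\textsc{Hom}(\mathscr M,{\rm Eq}(a,b))$, with the norm it inherits from $\textsc{Hom}(\mathscr M,\mathscr P)$, with the equaliser of $\textsc{Hom}(\mathscr M,-)(a)$ and $\textsc{Hom}(\mathscr M,-)(b)$, and the equaliser maps match up.

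The two dual verifications are analogous. For coproducts, $T\mapsto(T\circ\iota_i)_{i\in I}$ is an isomorphism from $\textsc{Hom}\big(\ell_1(\mathscr P_\star),\mathscr N\big)$ onto $\ell_\infty\big((\textsc{Hom}(\mathscr P_i,\mathscr N))_{i\in I}\big)=\prod_{i\in I}^{{\bf BanMod}_{\mathbb X}}\textsc{Hom}(\mathscr P_i,\mathscr N)$; the only delicate point is the inverse, namely that for $(S_i)_{i\in I}$ with $g\coloneqq\bigvee_{i\in I}|S_i|\in L^0(\mathbb X)$ the series $\sum_{i\in I}S_i(v_i)$ converges unconditionally in $\mathscr N$ for every $v_\star\in\ell_1(\mathscr P_\star)$, which is proved exactly as in Theorem \ref{thm:coprod_BanMod} by choosing a partition $(E_n)_{n\in\N}$ of $\X$ on which $g\,|v_\star|_1$ is bounded and invoking Remark \ref{rmk:about_conv_L0}, the resulting $T$ satisfying $|T|=g=\bigvee_{i\in I}|T\circ\iota_i|$. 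For coequalisers of $a,b\colon\mathscr P\to\mathscr Q$, Theorem \ref{thm:cokernel_BanMod} gives ${\rm Coeq}(a,b)=\mathscr Q/{\rm cl}_{\mathscr Q}\big((a-b)(\mathscr P)\big)$ with canonical projection $\pi$, and precomposition with $\pi$ identifies $\textsc{Hom}\big({\rm Coeq}(a,b),\mathscr N\big)$ with $\{T\in\textsc{Hom}(\mathscr Q,\mathscr N):T\circ a=T\circ b\}$, i.e.\ with the equaliser of $\textsc{Hom}(-,\mathscr N)(a^{\rm op})$ and $\textsc{Hom}(-,\mathscr N)(b^{\rm op})$, the equality of the relevant pointwise norms being obtained just as in the proof of Theorem \ref{thm:cokernel_BanMod}. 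I expect the only steps demanding genuine care to be the bookkeeping in the first paragraph — checking that the functor carries the canonical parallel pair of Theorem \ref{thm:suff_(co)complete} to the canonical pair of the transformed diagram — and the unconditional-convergence argument for the coproduct case; all the remaining points are a direct transcription of constructions already carried out for products, equalisers, and their duals.
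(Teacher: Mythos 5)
Your proposal is correct, but it takes a genuinely different route from the paper. The paper verifies the universal property of the limit of \(\tilde D\) directly: given a cone \((\mathscr Q,\Psi_\star)\) to \(\tilde D\), for each \(z\in\mathscr Q\) it rescales the homomorphisms \(\Psi_i(z)\colon\mathscr M\to D(i)\) by \(f_z\coloneqq\nchi_{\{|z|>0\}}\frac{1}{|z|}\) so that they become actual morphisms of \({\bf BanMod}_{\mathbb X}\) (i.e.\ of pointwise norm \(\leq 1\)), obtaining a cone \((\mathscr M,T^z_\star)\) to \(D\); the universal property of \((\mathscr L,\lambda_\star)\) then yields \(\tilde\Phi(z)\), and multiplying back by \(|z|\) produces the required mediating morphism \(\Phi\colon\mathscr Q\to\textsc{Hom}(\mathscr M,\mathscr L)\). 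This is a short, one-shot argument whose key trick --- division by \(|z|\) off its zero set --- is specific to the \(L^0\)-module setting. You instead factor the problem through the equaliser-of-products presentation of arbitrary small limits from Theorem \ref{thm:suff_(co)complete} and carry out four concrete preservation checks; this is longer and requires redoing the unconditional-convergence argument of Theorem \ref{thm:coprod_BanMod} for the coproduct case, but it is entirely modular and yields as byproducts the explicit natural isomorphisms \(\textsc{Hom}\big(\mathscr M,\ell_\infty(\mathscr P_\star)\big)\cong\ell_\infty\big((\textsc{Hom}(\mathscr M,\mathscr P_i))_{i\in I}\big)\) and \(\textsc{Hom}\big(\ell_1(\mathscr P_\star),\mathscr N\big)\cong\ell_\infty\big((\textsc{Hom}(\mathscr P_i,\mathscr N))_{i\in I}\big)\), which are of independent interest. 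Your four verifications, including the crucial norm identities \(|T|=\bigvee_{i\in I}|\pi_i\circ T|\) and \(|T|=\bigvee_{i\in I}|T\circ\iota_i|\), are all sound, and the reduction of the second item to preservation of coproducts and coequalisers under \(\textsc{Hom}(-,\mathscr N)\) is the correct dualisation.
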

\begin{proof}
We focus only on the first item, since the second one can by proved via similar arguments. For brevity, we denote \(\Phi_i\coloneqq\textsc{Hom}(\mathscr M,-)(\lambda_i)\)
for every \(i\in{\rm Ob}_{\bf J}\). Given a morphism \(\phi\colon i\to j\) in \({\bf J}\) and any \(T\in\textsc{Hom}(\mathscr M,\mathscr L)\), we have that
\((\tilde D(\phi)\circ\Phi_i)(T)=D(\phi)\circ\lambda_i\circ T=\lambda_j\circ T=\Phi_j(T)\), which shows that \(\big(\textsc{Hom}(\mathscr M,\mathscr L),\Phi_\star\big)\)
is a cone to \(\tilde D\). Now fix an arbitrary cone \((\mathscr Q,\Psi_\star)\) to \(\tilde D\). Given any \(z\in\mathscr Q\), we define
\(f_z\coloneqq\nchi_{\{|z|>0\}}\frac{1}{|z|}\in L^0(\mathbb X)\). Observe that for any \(i\in{\rm Ob}_{\bf J}\) the homomorphism \(T^z_i\coloneqq f_z\cdot\Psi_i(z)\colon\mathscr M\to D(i)\)
satisfies \(|T^z_i|\leq 1\), thus in particular it is a morphism in \({\bf BanMod}_{\mathbb X}\). Moreover, for any morphism \(\phi\colon i\to j\) in \({\bf J}\) and for
any \(v\in\mathscr M\) we can compute
\[
(D(\phi)\circ T^z_i)(v)=\big(D(\phi)\circ\Psi_i(z)\big)(f_z\cdot v)=(\tilde D(\phi)\circ\Psi_i)(z)(f_z\cdot v)=\Psi_j(z)(f_z\cdot v)=T^z_j(v),
\]
which shows that \((\mathscr M,T^z_\star)\) is a cone to \(D\). Hence, there exists a unique morphism \(\tilde\Phi(z)\colon\mathscr M\to\mathscr L\) such that
\(\lambda_i\circ\tilde\Phi(z)=T^z_i\) for every \(i\in{\rm Ob}_{\bf J}\). Letting \(\Phi(z)\coloneqq|z|\cdot\tilde\Phi(z)\in\textsc{Hom}(\mathscr M,\mathscr L)\)
for every \(z\in\mathscr Q\), we deduce that \(\Phi\colon\mathscr Q\to\textsc{Hom}(\mathscr M,\mathscr L)\) is the unique morphism in \({\bf BanMod}_{\mathbb X}\)
verifying the identity \(\Phi_i\circ\Phi=\Psi_i\) for every \(i\in{\rm Ob}_{\bf J}\). This proves that \(\big(\textsc{Hom}(\mathscr M,\mathscr L),\Phi_\star\big)\)
is the limit of \(\tilde D\).
\end{proof}

The next result immediately follows from Proposition \ref{prop:cont_hom-fct} (by just plugging \(\mathscr N=L^0(\mathbb X)\)):
\begin{corollary}
Let \(\mathbb X\) be a \(\sigma\)-finite measure space and \(\mathscr M\) a Banach \(L^0(\mathbb X)\)-module.
Fix a small diagram \(D\colon{\bf J}\to{\bf BanMod}_{\mathbb X}\), whose colimit we denote by \((\mathscr C,c_\star)\).
Define \(\tilde D\colon{\bf J}^{\rm op}\to{\bf BanMod}_{\mathbb X}\) as \(\tilde D(i)\coloneqq D(i)^*\)
for every \(i\in{\rm Ob}_{\bf J}\) and \(\tilde D(\phi^{\rm op})\coloneqq\textsc{Hom}(-,L^0(\mathbb X))(D(\phi)^{\rm op})\)
for every \(\phi\in{\rm Hom}_{\bf J}\). Then the limit of the diagram \(\tilde D\) is given by
\(\big(\mathscr C^*,\textsc{Hom}(-,L^0(\mathbb X))(c_\star^{\rm op})\big)\).
\end{corollary}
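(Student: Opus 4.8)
The plan is to deduce this corollary directly from the second item of Proposition \ref{prop:cont_hom-fct}, specialised to the case \(\mathscr N=L^0(\mathbb X)\). The only preliminary observation needed is that, by the very definition of the dual module, one has \(\mathscr Q^*=\textsc{Hom}(\mathscr Q,L^0(\mathbb X))\) for every object \(\mathscr Q\) of \({\bf BanMod}_{\mathbb X}\). Consequently, the diagram \(\tilde D\colon{\bf J}^{\rm op}\to{\bf BanMod}_{\mathbb X}\) defined in the present statement by \(\tilde D(i)\coloneqq D(i)^*\) and \(\tilde D(\phi^{\rm op})\coloneqq\textsc{Hom}(-,L^0(\mathbb X))(D(\phi)^{\rm op})\) coincides, on objects and on morphisms, with the diagram \(\tilde D\) appearing in Proposition \ref{prop:cont_hom-fct} for the particular choice \(\mathscr N=L^0(\mathbb X)\), since there \(\tilde D(i)=\textsc{Hom}(D(i),\mathscr N)\) and \(\tilde D(\phi^{\rm op})=\textsc{Hom}(-,\mathscr N)(D(\phi)^{\rm op})\).

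Having made this identification, I would simply invoke Proposition \ref{prop:cont_hom-fct}: its second item asserts that the limit of \(\tilde D\) is \(\big(\textsc{Hom}(\mathscr C,\mathscr N),\textsc{Hom}(-,\mathscr N)(c_\star^{\rm op})\big)\); putting \(\mathscr N=L^0(\mathbb X)\) and using \(\textsc{Hom}(\mathscr C,L^0(\mathbb X))=\mathscr C^*\), this is precisely \(\big(\mathscr C^*,\textsc{Hom}(-,L^0(\mathbb X))(c_\star^{\rm op})\big)\), which is the claimed limit. No further argument is required.

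There is essentially no obstacle: the statement is a pure specialisation of an already proved proposition, the single point worth spelling out being the definitional equality \(\textsc{Hom}(\mathscr Q,L^0(\mathbb X))=\mathscr Q^*\). If one preferred not to cite Proposition \ref{prop:cont_hom-fct} as a black box, one could instead reproduce its proof verbatim with \(\mathscr N=L^0(\mathbb X)\): for an arbitrary cone \((\mathscr Q,\Psi_\star)\) to \(\tilde D\), rescale each \(\Psi_i(z)\) by \(\nchi_{\{|z|>0\}}\frac1{|z|}\) to obtain morphisms in \({\bf BanMod}_{\mathbb X}\), use the universal property of the colimit \(\mathscr C\) to produce the comparison map \(\mathscr Q\to\mathscr C^*\), and check its uniqueness; but this merely repeats the same computation in a special case.
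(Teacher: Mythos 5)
Your proposal is correct and follows exactly the paper's own route: the paper derives this corollary by plugging \(\mathscr N=L^0(\mathbb X)\) into the second item of Proposition \ref{prop:cont_hom-fct} and using the definitional identity \(\mathscr Q^*=\textsc{Hom}(\mathscr Q,L^0(\mathbb X))\). Nothing further is needed.
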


In particular, if \(\big(\{\mathscr M_i\}_{i\in I},\{\varphi_{ij}\}_{i\leq j}\big)\) is a direct system
in \({\bf BanMod}_{\mathbb X}\) and we define
\[
\varphi_{ij}^{\rm adj}\coloneqq\textsc{Hom}(-,L^0(\mathbb X))(\varphi_{ij}^{\rm op})
\colon\mathscr M_j^*\to\mathscr M_i^*\quad\text{ for every }i,j\in I\text{ with }i\leq j,
\]
then \(\big(\{\mathscr M_i^*\}_{i\in I},\{\varphi_{ij}^{\rm adj}\}_{i\leq j}\big)\) is an inverse system
in \({\bf BanMod}_{\mathbb X}\) whose inverse limit is given by
\begin{equation}\label{eq:inv_lim_dual}
\varprojlim\mathscr M_\star^*\cong\big(\varinjlim\mathscr M_\star\big)^*
\end{equation}
together with the morphisms \(\varphi_i^{\rm adj}\coloneqq\textsc{Hom}(-,L^0(\mathbb X))(\varphi_i^{\rm op})
\colon\big(\varinjlim\mathscr M_\star\big)^*\to\mathscr M_i^*\), where we denote by
\(\big(\varinjlim\mathscr M_\star,\{\varphi_i\}_{i\in I}\big)\) the direct limit of
\(\big(\{\mathscr M_i\}_{i\in I},\{\varphi_{ij}\}_{i\leq j}\big)\) in \({\bf BanMod}_{\mathbb X}\).
\medskip

We conclude the paper with an example. We denote by \(\ell^1\) the space of all those sequences \((a_n)_{n\in\N}\in\R^\N\)
satisfying \(\sum_{n\in\N}|a_n|<+\infty\), which is a Banach space if endowed with the componentwise operations
and the norm \(\big\|(a_n)_{n\in\N}\big\|_{\ell^1}\coloneqq\sum_{n\in\N}|a_n|\). Its dual Banach space is
the space \(\ell^\infty\) of all bounded sequences in \(\R\), endowed with the supremum norm
\(\big\|(b_n)_{n\in\N}\big\|_{\ell^\infty}\coloneqq\sup_{n\in\N}|b_n|\). It is well-known that the
space \(\ell^\infty\) does not have the Radon--Nikod\'{y}m property, thus in particular
\begin{equation}\label{eq:LB_ell_infty}
L^0(\mathbb X;\ell^1)^*\ncong L^0(\mathbb X;\ell^\infty)\quad\text{ for every finite measure space }\mathbb X,
\end{equation}
as it follows from the discussion in Remark \ref{rmk:about_RNP}.
\begin{example}\label{ex:InvIm_no_inverse}{\rm
Let \(\mathbb X=(\X,\Sigma,\mm)\) be any given finite measure space. We denote by \(\pi\colon\X\to\{p\}\)
the constant map, so that \(\pi\colon\mathbb X\to\mathbb P\) is a morphism in \({\bf Meas}_\sigma\).
We know from Example \ref{ex:LB_as_pullback} that
\begin{equation}\label{eq:LB_Banach}
\pi^*B\cong L^0(\mathbb X;B)\quad\text{ for every Banach space }B.
\end{equation}
We define the elements \((e_n)_{n\in\N}\subseteq\ell^1\) as \(e_n\coloneqq(\delta_{nk})_{k\in\N}\)
for every \(n\in\N\). Then one can readily check that \(\big(\{B_n\}_{n\in\N},\{\iota_{nm}\}_{n\leq m}\big)\)
is a direct system in \({\bf Ban}\), where \(B_n\) stands for the subspace of \(\ell^1\) generated
by \(\{v_1,\ldots,v_n\}\) and \(\iota_{nm}\colon B_n\hookrightarrow B_m\) denotes the inclusion map,
and that \(\varinjlim_\N B_\star\cong\ell^1\). Each space \(B_n\) is finite-dimensional, thus its
dual \(B'_n\) is finite-dimensional as well, and in particular it has the Radon--Nikod\'{y}m property.
Therefore, we deduce (recalling Remark \ref{rmk:about_RNP}) that
\begin{equation}\label{eq:LB_fin-dim}
L^0(\mathbb X;B_n)^*\cong L^0(\mathbb X;B'_n)\quad\text{ for every }n\in\N.
\end{equation}
Observe that \(\big(\{B'_n\}_{n\in\N},\{\iota_{nm}^{\rm adj}\}_{n\leq m}\big)\) and
\(\big(\{\pi^*B'_n\}_{n\in\N},\{\pi^*\iota_{nm}^{\rm adj}\}_{n\leq m}\big)\) are inverse systems
in \({\bf Ban}\) and in \({\bf BanMod}_{\mathbb X}\), respectively. Nonetheless, it holds that
\(\pi^*\varprojlim B'_\star\ncong\varprojlim\pi^*B'_\star\), since
\[\begin{split}
\pi^*\varprojlim B'_\star&\overset{\eqref{eq:inv_lim_dual}}\cong\pi^*\big(\varinjlim B_\star\big)'
\cong\pi^*(\ell^1)'\cong\pi^*\ell^\infty\overset{\eqref{eq:LB_Banach}}\cong L^0(\mathbb X;\ell^\infty)
\overset{\eqref{eq:LB_ell_infty}}\ncong L^0(\mathbb X;\ell^1)^*\overset{\eqref{eq:LB_Banach}}\cong(\pi^*\ell^1)^*\\
&\overset{\phantom{\eqref{eq:inv_lim_dual}}}\cong\big(\pi^*\varinjlim B_\star\big)^*
\overset{\eqref{eq:dir_lim_and_inv_im}}\cong\big(\varinjlim\pi^*B_\star\big)^*
\overset{\eqref{eq:inv_lim_dual}}\cong\varprojlim(\pi^*B_\star)^*\overset{\eqref{eq:LB_Banach}}\cong
\varprojlim L^0(\mathbb X;B_\star)^*\\
&\overset{\eqref{eq:LB_fin-dim}}\cong\varprojlim L^0(\mathbb X;B'_\star)
\overset{\eqref{eq:LB_Banach}}\cong\varprojlim\pi^*B'_\star,
\end{split}\]
which shows that the functor \({\rm InvIm}_\pi\) does not preserve inverse limits.
\fr}\end{example}
\def\cprime{$'$} \def\cprime{$'$}

\end{document}